\newtheorem*{thmA}{Theorem A}
\newtheorem*{thmB}{Theorem B}
\newtheorem*{corC}{Corollary C}
\theoremstyle{plain}
\newtheorem{theorem}{Theorem}[section]
\newtheorem{lemma}[theorem]{Lemma}
\newtheorem{corollary}[theorem]{Corollary}
\newtheorem{prop}[theorem]{Proposition}
\newtheorem{remark}{Remark}
\renewcommand{\b}{\color{blue}}
\newcommand{\w}{\color{white}}
\newsavebox{\savepar}
\begin{document}

\title[On the  basins of attraction for the secant method ]{Topological properties of the immediate basins of attraction for the secant method}

\date{\today}

\author{Laura Gardini}
\address{Department of Economics, Society and Politics, University of Urbino, Italy}
\email{laura.gardini@uniurb.it} 
\author{Antonio  Garijo}
\address{Departament d'Enginyeria Inform\`atica i Matem\`atiques,
Universitat Rovira i Virgili, 43007 Tarragona, Catalonia.}
\email{antonio.garijo@urv.cat}
\author{Xavier Jarque}
\address{Departament de Matem\`atiques i Inform\`atica at Universitat de Barelona and Barcelona Graduate School of Mathematics, 08007 Barcelona, Catalonia.}
\email{xavier.jarque@ub.edu}

\thanks{This work has been partially supported by MINECO-AEI grants
MTM-2017-86795-C3-2-P and MTM-2017-86795-C3-3-P and the AGAUR grant 2017 SGR 1374}\

\begin{abstract}
We study the discrete dynamical system defined on a subset of  $R^2$  given by the iterates of the secant method applied to a real polynomial $p$. Each simple real root $\alpha$ of $p$ has associated its basin of attraction $\mathcal A(\alpha)$ formed by the set of points converging towards the fixed point $(\alpha,\alpha)$ of $S$.
We denote by $\mathcal A^*(\alpha)$ its immediate basin of attraction, that is, the connected component of $\mathcal A(\alpha)$ which contains $(\alpha,\alpha)$. We focus on some topological properties of $\mathcal A^*(\alpha)$, when $\alpha$ is an internal real root of $p$. More precisely, we show the existence of a 4-cycle in $\partial \mathcal A^*(\alpha)$ and we give conditions on $p$ to guarantee the simple connectivity of $\mathcal A^*(\alpha)$.

\vspace{0.5cm}

\noindent \textit{Keywords: Root finding algorithms, rational iteration, secant method, periodic orbits}

\vglue 0.2truecm

\noindent \textit{MSC2010:  37G35, 37N30, 37C70}

\end{abstract}

\maketitle

\section{Introduction and statement of the results}\label{sec:Introduction}

Dynamical systems is a powerful tool in order to have a deep understanding on the global behavior of the so called {\it root-finding} algorithms, that is, iterative methods capable to numerically determine the solutions of the equation $f(x)=0$. In most cases, it is well known the order of  convergence of those methods near the zeros of $f$, but it is in general unclear the behavior and effectiveness when initial conditions are chosen on the whole space; a natural question when we do not know a priori where the roots are or if there are many of them. 

The numerical exploration of the solutions of the equation $f(x)=0$ has been always  central problem in many areas of applied mathematics; from biology to engineering, since most mathematical models requires to have a thorough knowledge of the solutions of certain equations. Once we are certain that no algebraic manipulation of the equation will allow to explicitly find out the  solutions, one can try to built numerical methods which will approximate the solutions with arbitrary precision. Perhaps the most well known and universal method is the {\it  Newton method} inspired on the linearization of the equation $f(x)=0$. But also other methods has shown to be certainly efficient like the {\it secant method}, the main object of the paper.

Roughly speaking, all these iterative methods give efficient ways to find the solutions of $f(x)=0$, at least once you have a good approximation of them. However, there is a significant amount  of uncertainty  when the initial conditions are freely chosen, i.e.  when there is not a {\it natural} candidate for the solution or the number of solutions is high. It is  in this context where dynamical systems might play a central role. As an example we can refer to \cite{HowToNewton} where the authors first prove theoretical results on the global dynamics of the Newton method and then apply them to create efficient algorithms to find out {\it all} solutions, even in the case that the degree of $p$ is huge. 

This paper is a step forward in this direction for the secant method. Remarkably, this method presents some advantages to Newton's method but the natural phase space of its associated iterative system is not 1-dimensional anymore, but 2-dimensional. Therefore its study requires new techniques and ideas like the ones presented in this paper. See also \cite{BedFri,Tangent}.

Let $p$ be a real polynomial of degree $k$ given by 
\begin{equation} \label{eq:p}
p(x)=a_0 + a_1 x + \ldots + a_k x^k \quad  \hbox{  with  }\quad a_k \neq 0.
\end{equation}
We assume that $p$ has exactly $n \in \{3,\ldots k\}$  simple real roots denoted by $ \alpha_0 < \alpha_1 < \ldots < \alpha_{n-2} < \alpha_{n-1} $. The roots  $\alpha_0$ and $\alpha_{n-1}$ are called the {\it external } roots of $p$, in contrast the rest of the roots  $\alpha_j$ for $ 1 \leq j \leq n-2$ are called the {\it internal} roots of $p$. 

We consider the {\it secant method} applied  to the polynomial $p$ as a discrete dynamical system acting on the real plane,
\begin{equation} \label{eq:secant_real}
S:=S_p: \mathbb R^2 \mapsto \mathbb R^2 , \qquad
S: \left( 
\begin{array}{l}
x \\
y
\end{array}
\right) \mapsto  
 \left( 
\begin{array}{l}
y \\
y - p(y) \frac{y-x}{p(y)-p(x)}
\end{array}
\right),
\end{equation}
and the orbit of the seed $\left(x_0,y_0\right)\in \mathbb R^2$  is given by the iterates of the map; that is, the sequence $\{S^m\left(x_0,y_0\right)\}_{m \geq 0} $. 
We refer to \cite{Tangent} for a detailed discussion of the two-dimensional dynamical system induced by $S$ and also some consequences as a root finding algorithm. Here we will always consider $S:\mathbb R^2 \to \mathbb R^2$, but  there is a natural extension of this problem  by assuming $p$ as a  polynomial with complex coefficients and thus  $S:\mathbb C^2 \to \mathbb C^2$. See {\cite{BedFri} for a discussion on this context.   

Any simple root $\alpha$ of $p$ corresponds to an attracting fixed point $(\alpha,\alpha)$ of the secant map $S$. Thus, we can consider the {\it basin of attraction} of $(\alpha,\alpha)$, denoted by $\mathcal A(\alpha)$, consisting of all points tending towards this fixed point, 
\begin{equation}\label{eq:basin}
\mathcal A(\alpha) = \{ (x,y) \in \mathbb R^2 \, ; \, S^m(x,y) \to (\alpha,\alpha) \hbox{  as } m  \to \infty \}.
\end{equation}     
\noindent It is easy to see that when $\alpha$ is a simple root of $p$ then the point $(\alpha,\alpha)$ belongs to ${\rm Int}\left(\mathcal A(\alpha)\right)$. However this is not always the case when $\alpha$ is a multiple root of $p$ (see \cite{TangentMultiple}). It is remarkable that even in the case of $\alpha$ being a simple root of $p$ the local dynamics around the point $(\alpha,\alpha)$ does not follow the typical behavior of an attracting fixed point of a diffeomorphism ({\it \`a la Hartman-Grobman}) due to the presence of infinitely many points, in any neighbourhood of the fixed point, which under one iteration land on the fixed point. 

We also denote by $\mathcal A^*(\alpha)$ the {\it immediate basin of attraction} of $(\alpha,\alpha)$, i.e., the maximal connected component of $\mathcal A(\alpha)$ containing $(\alpha,\alpha)$. Moreover, if $\alpha$ is an external root of $p$ then its immediate basin of attraction is an unbounded set while if $\alpha$ is an internal root then $\mathcal A^*(\alpha)$ is bounded (See \cite{Tangent}). This  property shows the first topological difference between the immediate basin of attraction of an external and an internal simple root.

\begin{figure}[ht]
    \centering
    \subfigure[\scriptsize{ $T_3(x)=4x^3-3x$.}]{
     \includegraphics[width=0.4\textwidth]{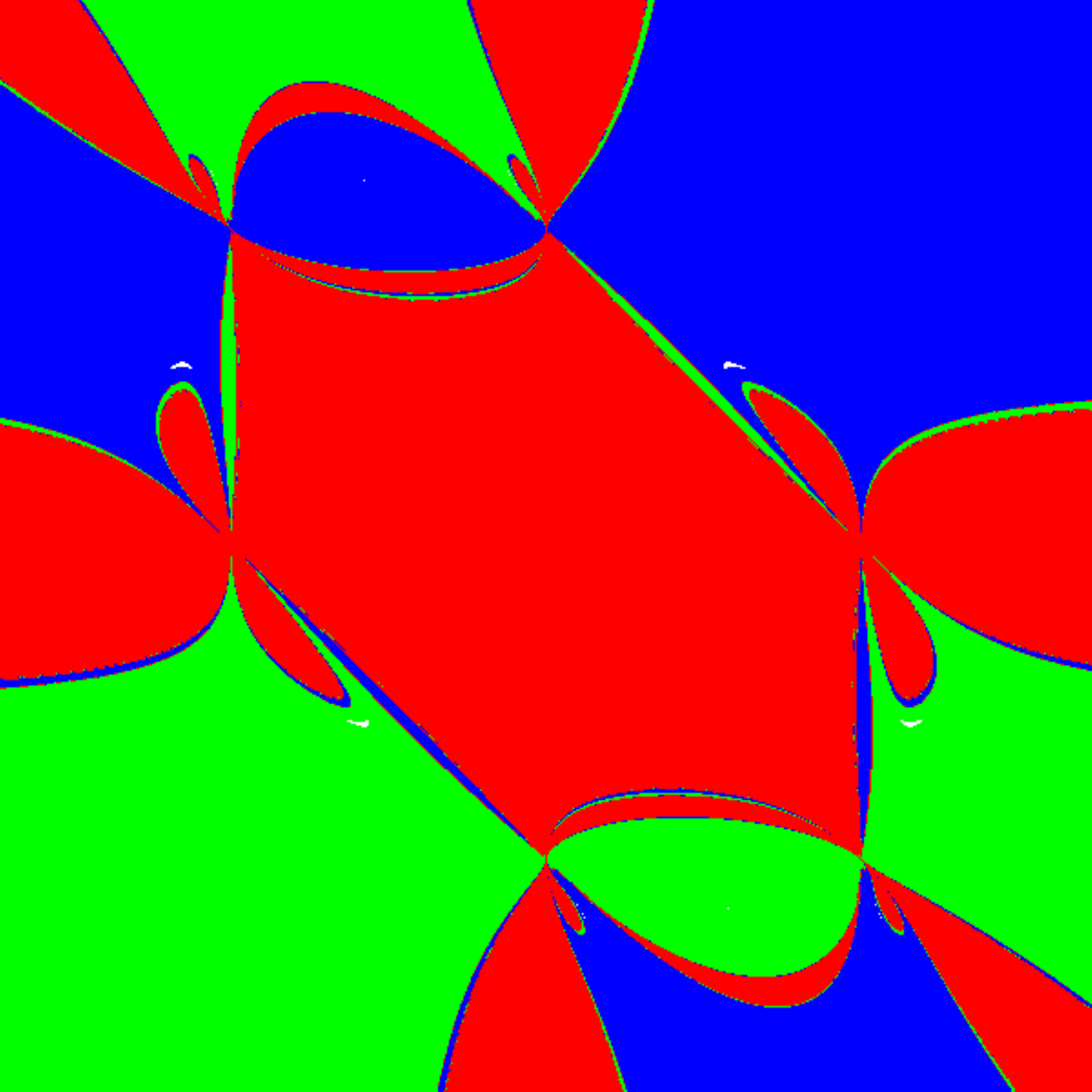}
      }
    \subfigure[\scriptsize{$T_4(x)=8x^4-8x^2+1$.}]{
     \includegraphics[width=0.4\textwidth]{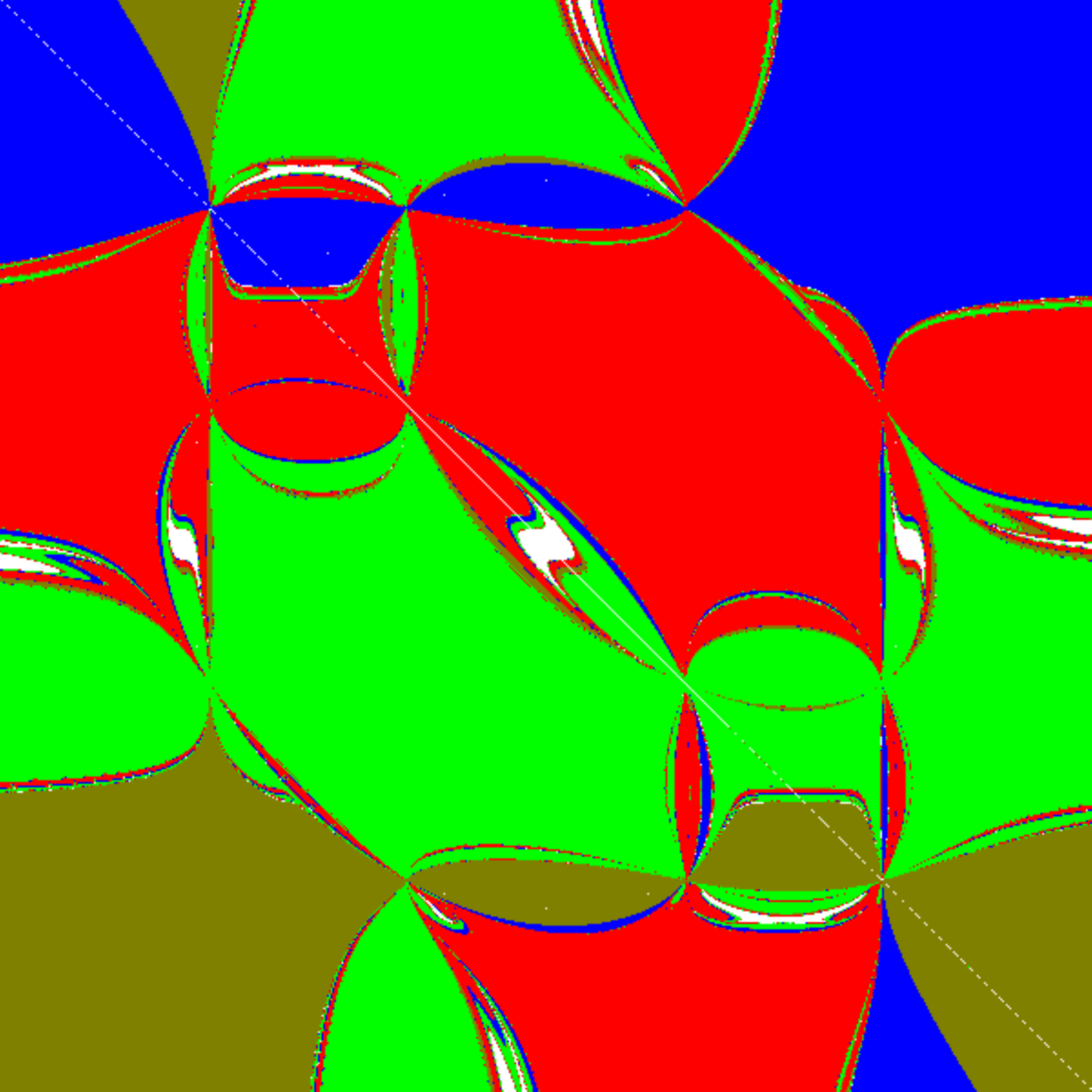}
      }   \\
       \subfigure[\scriptsize{$T_5(x)=16x^5-20x^3+5x$.}]{
     \includegraphics[width=0.4\textwidth]{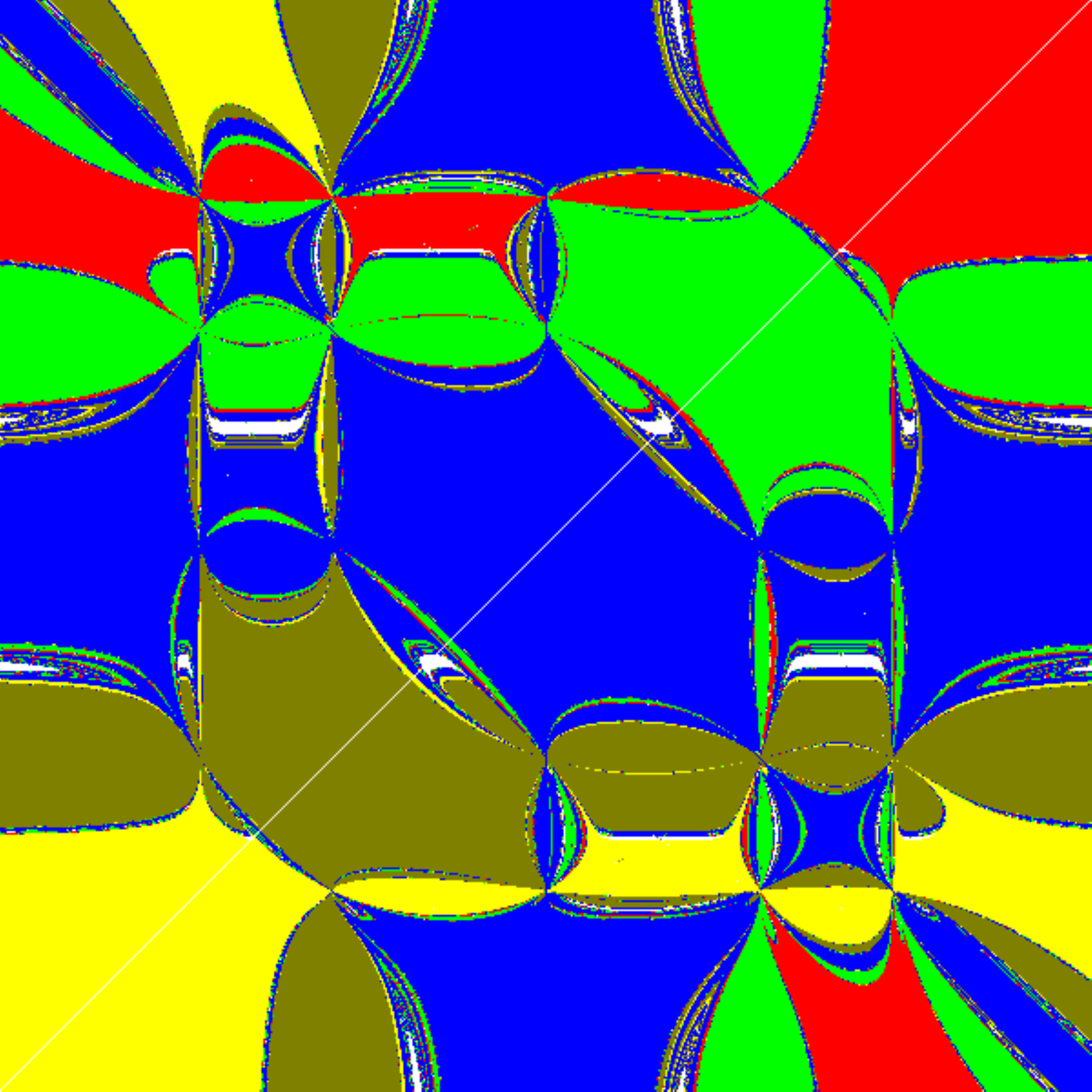}
      }      
    \subfigure[\scriptsize{$T_{11}(x)=1024x^{11}-2816x^9+2816x^7-1232x^5+220x^3-11x$.}]{
     \includegraphics[width=0.4\textwidth]{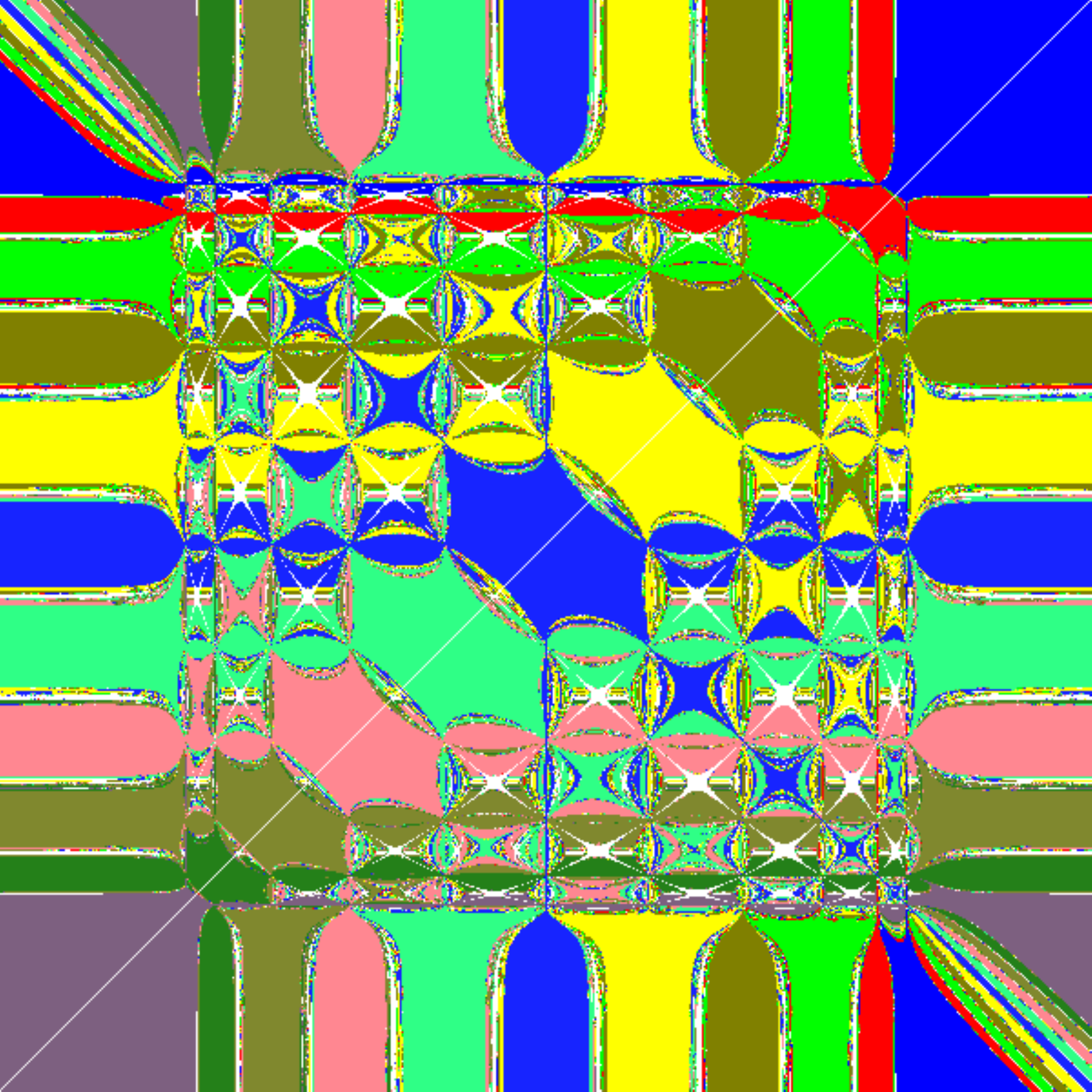}
}
      
 \caption{\small{  Phase plane of the secant map applied to the Chebyshev polynomials $T_k(x)$ for $k=3,4,5$ and 11. We show each basin of attraction with a different color. Range of the pictures  [-1.5,1.5]x[-1.5,1.5]. }}
    \label{fig:chebyshev}
    \end{figure}

Along the paper, as a toy model for numerical experiments, we take the family of  Chebychev polynomials $T_k(x)$ for $k \geq 0$. We recall that Chebyshev polynomials can be  defined by $T_0(x)=1$, $T_1(x)=x$ and recursively $T_{k+1}(x)=2xT_k(x)-T_{k-1}(x)$ for $k \geq 1$. Among other properties every polynomial  $T_k(x)$ has degree $k$ and exhibits $k$ simple real roots in the interval $(-1,1)$. Indeed  the roots of $T_k$ are located at points $x_j=\cos\left( \frac{\pi (j+1/2)}{n} \right)$, for $j = 0, \ldots, k-1$. 
In Figure \ref{fig:chebyshev} we show the phase plane of the Secant maps for  the polynomials $T_k$ for $k=3,4,5$ and $11$. The range of the picture is $[-1.5, 1.5]\times[-1.5,1.5]$ so the points $(\alpha,\alpha)$ are located at the diagonal of each picture. The topological structure of the immediate basin of attraction seems to  remain similar depending only on the character of the root (internal or external). In order to state the main results on this direction we first introduce some required notation. 

%For instance, when $\alpha$ is an internal root of $p$ then it seems that the immediate basin of attraction has an hexagon-like polygon.

Let $T\subset \mathbb R^2$ be a bounded (infinite) graph formed by vertices and edges. We say that an edge of $T$ is a {\it lobe} if it connects a  vertex with itself. We  say that $T$ is a {\it smooth hexagon-like polygon with lobes} if it is formed by six vertices, six $C^1$-edges connecting those vertices and countably many $C^1$-lobes at some of the vertices.  See Figure \ref{fig:polygon}.

%We say that an unbounded open set $D$ is an {\it square-like polygon} if the boundary of $D$ is formed by  two finite vertices  and two infinite vertices and four curves (sides) connecting the vertices.  Finally, we say that an unbounded open set $D$ is a {\it  3-holes adorned  square-like polygon}  if the boundary of $D$ contains the boundary of an square-like a unique lobe attached to a vertex and three jordan curves. In the top line of Figure \ref{fig:polygon} we show an sketch of these three objects. 

%
%\begin{center}
%\includegraphics[
%natheight=2.073000in,
%natwidth=3.620100in,
%height=2.1119in,
%width=3.6677in
%]%
%{./figures/Fig2new.pdf}%
%\\
%Fig.2 new
%\end{center}

 \begin{figure}[ht]
    \centering
     \includegraphics[width=0.75\textwidth]{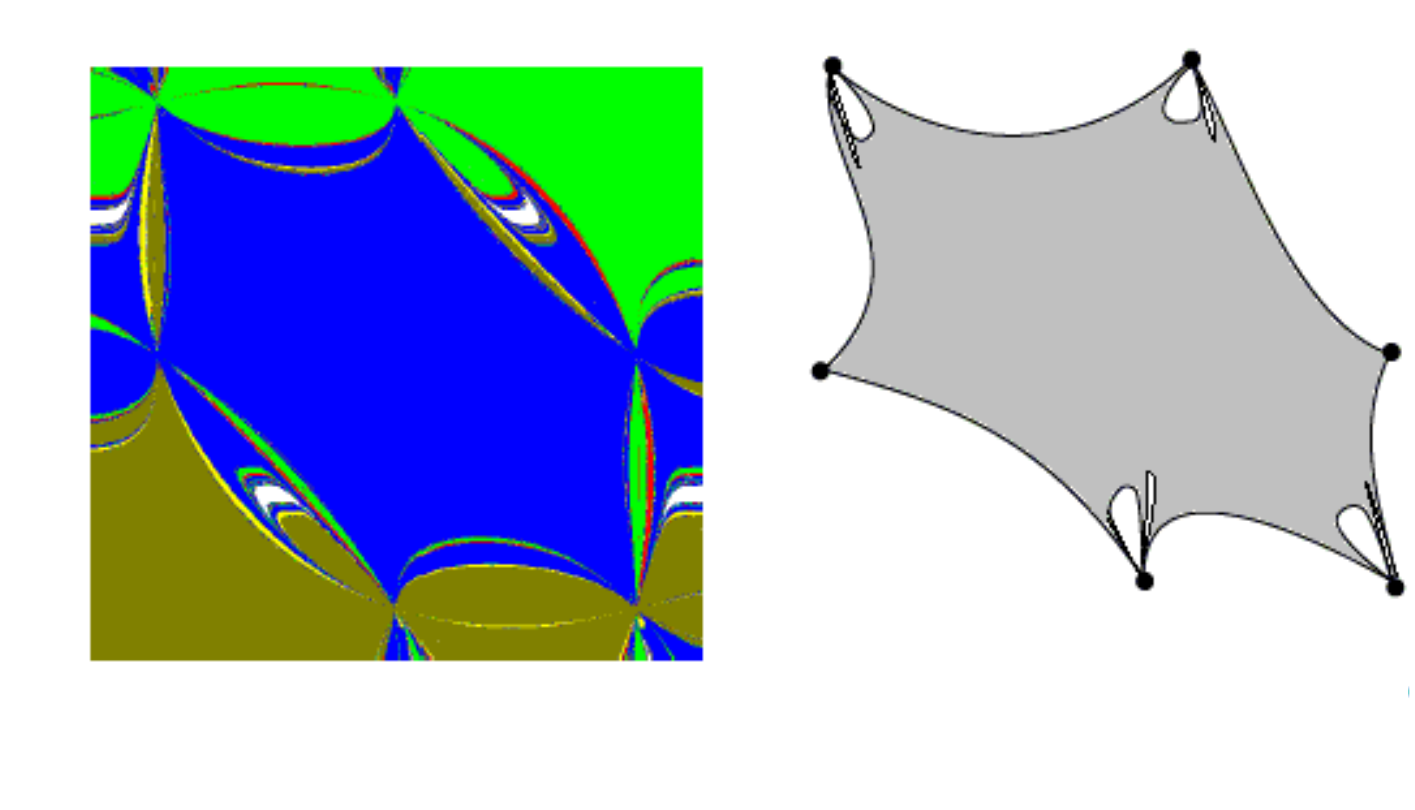}
    \put(-130,30) {\small Hexagon-like polygon with lobes} 
    \put(-245,105) {\tiny \w{ $ \times (0,0)$ }}
    \put(-275,125) {\small \w{ $ \mathcal A^*(0)$ }}
        \put(-65,105) {\tiny  $ \times (\alpha_1,\alpha_1)$ }
            \put(-95,125) {\small $ \mathcal A^*(\alpha_1)$ }
            \caption{\small{ On the left hand side we show the phase space of the secant map applied to the Chebyshev polynomial $T_5(x)=16x^5-20x^3 + 5x$ (see Figure
           \ref {fig:chebyshev}(c)), we show in blue  $\mathcal A^*(0)$.  Range of the phase plane  [-0.75,0.75]x[-0.75,0.75]. On the right hand side we sketch an hexagon-ike polygon with lobes which is  the topological model of the immediate basin of  attraction of an internal root $\alpha_1$. The six vertices of the hexagon are focal points and we only show two (of countable many) lobes attached to the focal points.}}
    \label{fig:polygon}
    \end{figure}

The goal of this paper is to describe the {\it topology} of  the immediate basin of attraction of an internal root  of $p$, when the roots are simple. We collect the main results on two statements. The first one is about the topology of the {\it external boundary} of $\partial \mathcal A^*(\alpha)$  and its dynamics.

%In this paper we have a double goal related to the immediate basin of attraction of a  root  of $p$. On the one hand to describe the {\it  topology} of the boundary and on the other hand describe the {dynamics} on it. 

\begin{thmA}
Let $\alpha_1$ be an internal root of $p$ and let $\alpha_0 < \alpha_1 < \alpha_2$ be simple consecutive roots of $p$. The following statements hold,  provided the external boundary is piecewise smooth.
\begin{enumerate}
\item[(a)] $\partial \mathcal A^*(\alpha_1)$ contains an hexagon-like polygon with lobes where the vertices are the focal points\footnote{Focal points and lobes
will be recalled in Section \ref{sec:planerational}.} $Q_{i,j} \ i\ne j \in \{0,1,2\}$.
\item[(b)] There exists a 4-cycle  in $\partial \mathcal A^*(\alpha_1)$.
\end{enumerate}
\end{thmA}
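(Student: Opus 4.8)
The plan is to target statement (b), taking (a) as given (it places the six focal points on $\partial\mathcal A^*(\alpha_1)$) and exploiting the focal structure of $S$, which is the engine behind the $4$-cycle. Write $S(x,y)=(y,N(x,y))$ with $N(x,y)=\frac{xp(y)-yp(x)}{p(y)-p(x)}$. The first step is to record four elementary identities: (i) each $Q_{i,j}=(\alpha_i,\alpha_j)$ with $i\ne j$ is a focal point (since $p(\alpha_i)=p(\alpha_j)=0$ makes numerator and denominator of $N$ vanish together), and approaching $Q_{i,j}$ along $y-\alpha_j=m(x-\alpha_i)$ the first coordinate tends to $\alpha_j$ while the second tends to a M\"obius function of the slope $m$, so the prefocal line of $Q_{i,j}$ is the vertical line $V_j=\{x=\alpha_j\}$; (ii) $S$ collapses each horizontal line $H_j=\{y=\alpha_j\}$ onto the fixed point $(\alpha_j,\alpha_j)$; (iii) $S$ maps $V_j$ onto $H_j$ by $(\alpha_j,v)\mapsto(v,\alpha_j)$; and (iv) dually, the $S$-preimage of any point of $V_j$ reduces to the points $(\alpha_i,\alpha_j)$, i.e. to the focal points $Q_{i,j}$ together with $(\alpha_j,\alpha_j)$.

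The combinatorial skeleton comes next. By (i) the prefocal line of $Q_{i,j}$ is $V_j$, and $V_j$ carries precisely the two hexagon vertices whose \emph{first} index is $j$, namely $Q_{j,a}$ with $a\ne j$; hence, along the boundary, $S$ sends the vertex $Q_{i,j}$ to the pair $\{Q_{j,a}:a\ne j\}$. Reading off indices, this is exactly the shift on the subshift over $\{0,1,2\}$ that forbids two equal consecutive symbols, and the $S$-orbit of a point whose coordinates shadow roots $\alpha_{i_0},\alpha_{i_1},\dots$ realises the itinerary $(i_m)$. The cyclic itinerary $1,0,1,2$ is admissible and of exact period $4$ (period $2$ is ruled out because $i_1=0\ne 2=i_3$), and it visits precisely the four non-corner vertices $Q_{1,0},Q_{0,1},Q_{1,2},Q_{2,1}$ lying on $V_1\cup H_1$, in the cyclic order $Q_{1,0}\to Q_{0,1}\to Q_{1,2}\to Q_{2,1}\to Q_{1,0}$. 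It is moreover the only period-$4$ itinerary inside $\{0,1,2\}$ that repeats the central symbol $1$, which is what attaches it to the basin of $\alpha_1$.

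The main task is then to promote this combinatorial $4$-cycle to a genuine periodic orbit of $S$ on $\partial\mathcal A^*(\alpha_1)$. I would work in the blow-up of the four focal points $Q_{1,0},Q_{0,1},Q_{1,2},Q_{2,1}$, where the indeterminacy of $S$ is resolved and each passage through a focal point becomes the nondegenerate M\"obius correspondence of (i) between the slope of approach and a point of the associated prefocal line. Choosing four small cross-cut quadrilaterals $R_0,R_1,R_2,R_3$, each straddling the boundary arc at one of the four vertices in the order dictated by the itinerary, I would show that $S$ maps $R_m$ across $R_{m+1}$ in the covering sense, so that $S^4$ covers $R_0$ across itself; a Brouwer/covering-relation argument then yields a fixed point of $S^4$, that is, a genuine $4$-cycle with the prescribed itinerary (the full forbidden-block subshift is in fact present, so this is one periodic point among a Cantor set of orbits). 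Nondegeneracy of the four M\"obius transitions makes the return map hyperbolic of saddle type, which guarantees the orbit is honest — off the exceptional fibres — and the fact that every $R_m$ straddles $\partial\mathcal A^*(\alpha_1)$, together with the invariance of the boundary from (a), forces the $4$-cycle onto $\partial\mathcal A^*(\alpha_1)$.

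The hard part is exactly this promotion: since all four visited vertices are points of indeterminacy, no off-the-shelf hyperbolic shadowing applies, and one must control the four slope-to-prefocal-point transition maps in the blow-up precisely enough to verify both the covering of the box loop and the saddle estimate for $S^4$. A secondary subtlety is separating honest period-$4$ points from the spurious cycles living on the collapsing lines $H_j$ and on the lobes: one must check that the orbit produced lies on the external boundary and not inside a lobe or in an adjacent basin, which is where the separating role of $\partial\mathcal A^*(\alpha_1)$ established in (a), and the choice of the central-symbol itinerary $1,0,1,2$, are essential.
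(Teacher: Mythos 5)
Your plan for part (b) has a fatal structural flaw: the $4$-cycle you aim to construct cannot exist where your construction looks for it. Your covering-box scheme confines the orbit to small neighbourhoods of the four focal points $Q_{1,0},Q_{0,1},Q_{1,2},Q_{2,1}$, so writing the cycle as $(a,b)\to(b,c)\to(c,d)\to(d,a)$ your itinerary forces $a\approx\alpha_1$, $b\approx\alpha_0$, $c\approx\alpha_1$, $d\approx\alpha_2$, with $a$ and $c$ both close to $\alpha_1$. But Proposition \ref{prop:configurations} of the paper (which you never invoke) shows that \emph{every} $4$-cycle of $S$ has cross ratio $\lambda=\frac{(c-a)(d-b)}{(c-b)(d-a)}\in\left\{\frac{\sqrt{5}-1}{2},\,-\frac{\sqrt{5}+1}{2}\right\}$, hence bounded away from $0$. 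For an orbit shadowing your itinerary the factor $c-a$ tends to $0$ while the other three factors stay bounded away from $0$, so $\lambda\to 0$ (and this persists under any relabelling of the cycle), a contradiction. Consequently, once your quadrilaterals are small enough to deserve the name "cross-cuts at the vertices," the four covering relations can never simultaneously hold, and no Brouwer-type argument can return a fixed point of $S^4$ there; this is not a repairable technicality. Indeed the genuine cycle lies at a definite distance from all six vertices, with one point in the \emph{interior} of each of the four edges $I$, $J$, $K$, $\Lambda$, whose endpoints also involve the corner vertices $Q_{0,2}$ and $Q_{2,0}$ that are absent from your itinerary; your combinatorial localisation of the orbit is simply wrong.

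The paper's actual route to (b) is short and uses the edge dynamics established in the proof of (a) (Proposition \ref{prop:boundary_4cycle}): $S$ maps the arc $I$ (joining $Q_{1,0}$ and $Q_{0,1}$) onto $J$ (joining $Q_{0,2}$ and $Q_{1,2}$); the image of $J$ cannot be $N$ (that would produce a $2$-cycle, which $S$ does not have), so $J$ is folded onto a subarc of $K$ issuing from $Q_{2,1}$; $K$ maps onto $\Lambda$; and $\Lambda$ is folded onto a subarc of $I$ issuing from $Q_{0,1}$. Hence $S^4$ maps the arc $I$ continuously into a subarc $I_1\subset I$, and a fixed point $\zeta\in I_1$ exists by an intermediate-value argument on the arc; its orbit is the desired $4$-cycle, one point per edge, identified as type I by Proposition \ref{prop:configurations}. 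Two further overreaches in your write-up: your assertion that the M\"obius transitions make the return map "hyperbolic of saddle type" is unsupported — the paper only \emph{conjectures} saddle behaviour in a remark, backed by one numerical example — and part (a) cannot simply be "taken as given" and then bypassed, since it is precisely the hexagon-edge construction of (a) that supplies the self-map $S^4\colon I\to I_1$ from which (b) follows.
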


Secondly we investigate the connectedness of the immediate basin of attraction. Looking at the examples in Figure \ref{fig:chebyshev} the immediate basin of attraction of an internal root seems to be simply connected. However, it is easy to find examples where $\mathcal A^*(\alpha)$ is multiply connected. See Figure \ref{fig:multiply_connected}. In the next result we find sufficient conditions to assure that the immediate basin of attraction of an internal root is a simply connected set.

\begin{figure}[ht]
    \centering
    \subfigure[\scriptsize{$p_1(x)=16x^5-20x^3+x+0.8$.}]
    {\includegraphics[width=0.4\textwidth]{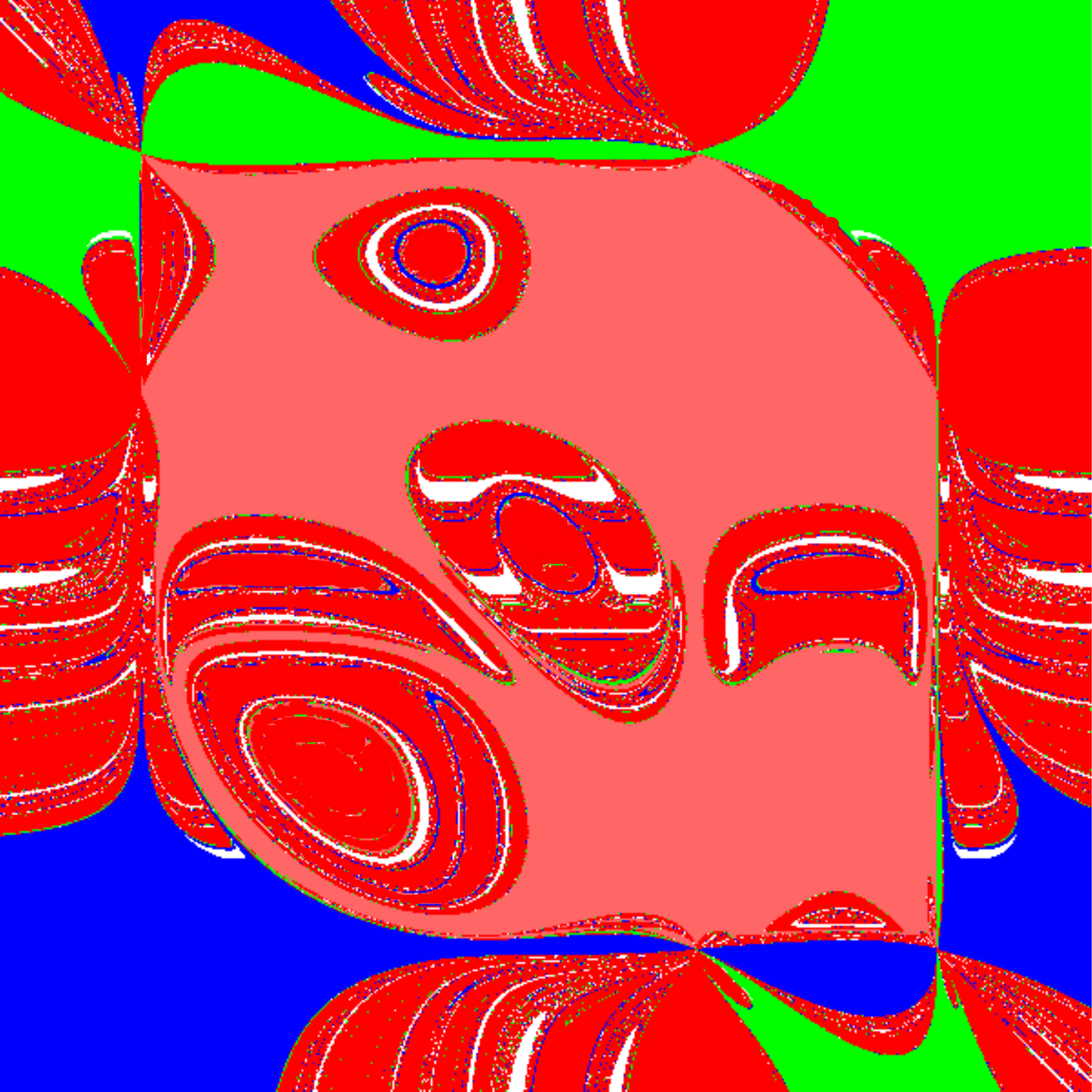}}
     \put(-60,115) {\scriptsize $\left(\alpha,\alpha\right)$} 
     \put(-69,115) {$\bullet$}
   \hglue 0.2truecm \subfigure[\scriptsize{$p_2(x)=\frac{x5}{5}-\frac{x^3}{3}-0.05x+0.15$.}]{\includegraphics[width=0.4\textwidth]{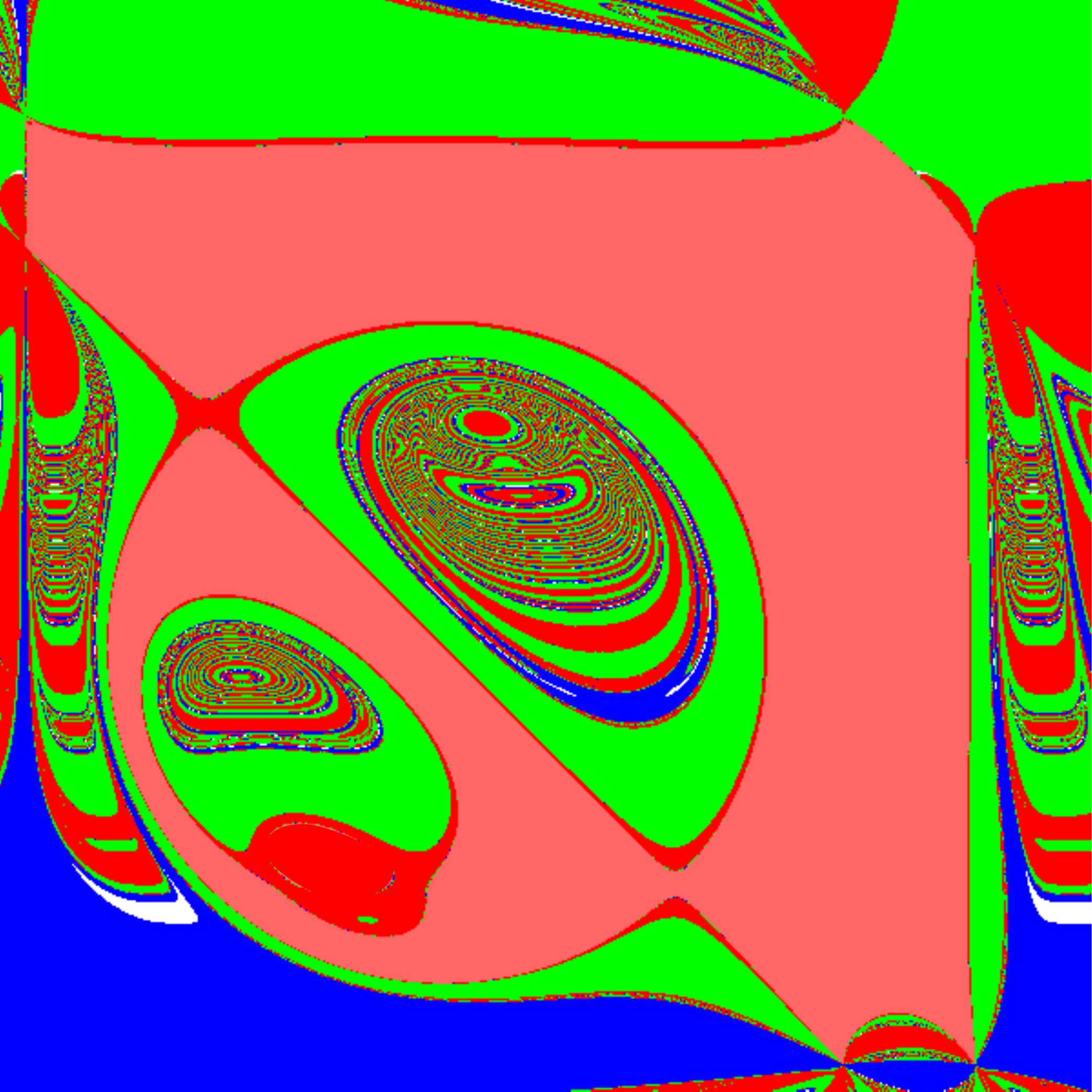}}
   \put(-68,137) {\scriptsize $\left(\alpha,\alpha\right)$} 
     \put(-43,135) {$\bullet$}
    \\
      \subfigure[\scriptsize{ Graph of the polynomilal $p_1$.}]{\includegraphics[width=0.35\textwidth]{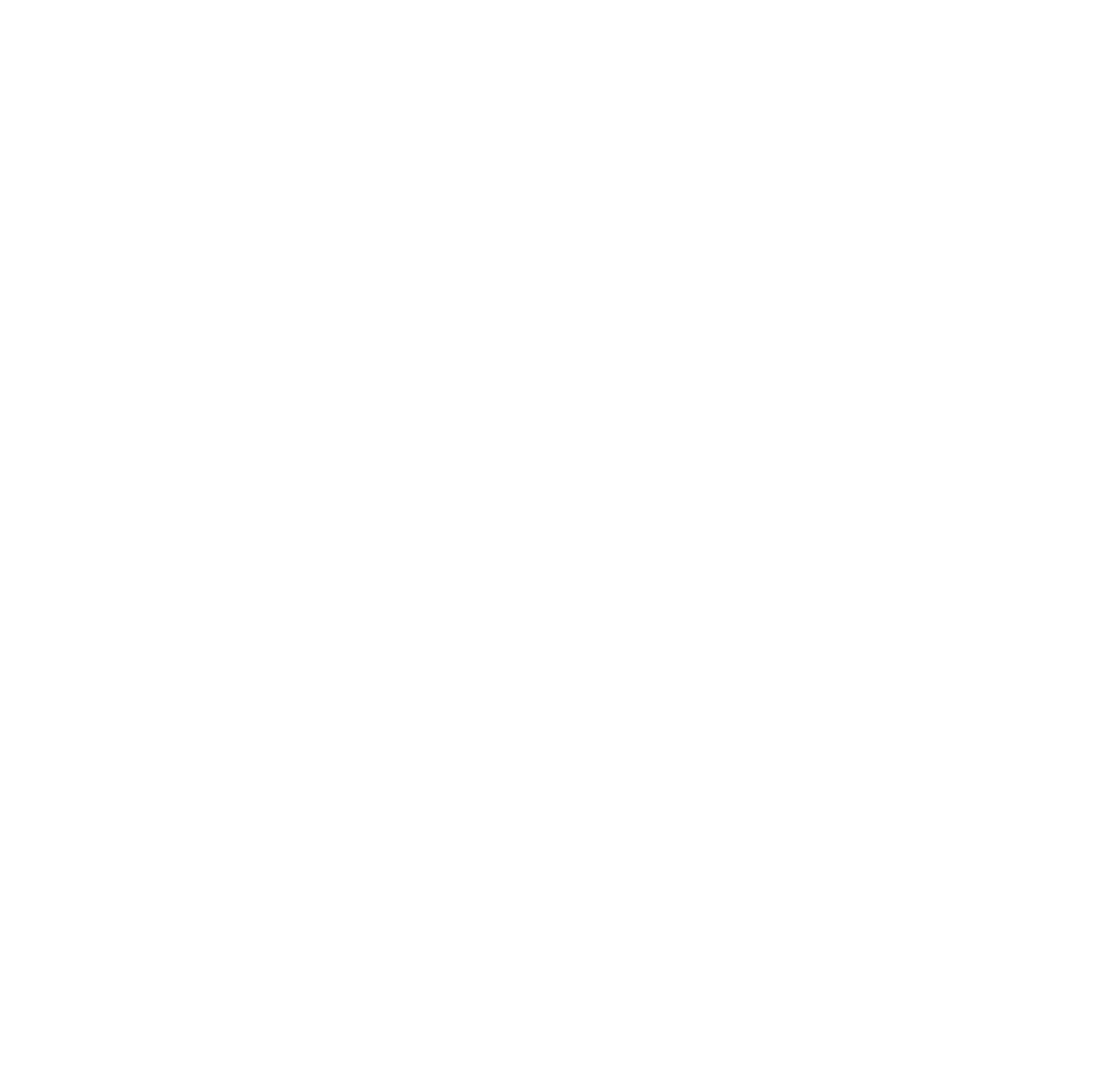}}
          \subfigure[\scriptsize{Graph of the polynomial $p_2$.}]{\includegraphics[width=0.5\textwidth]{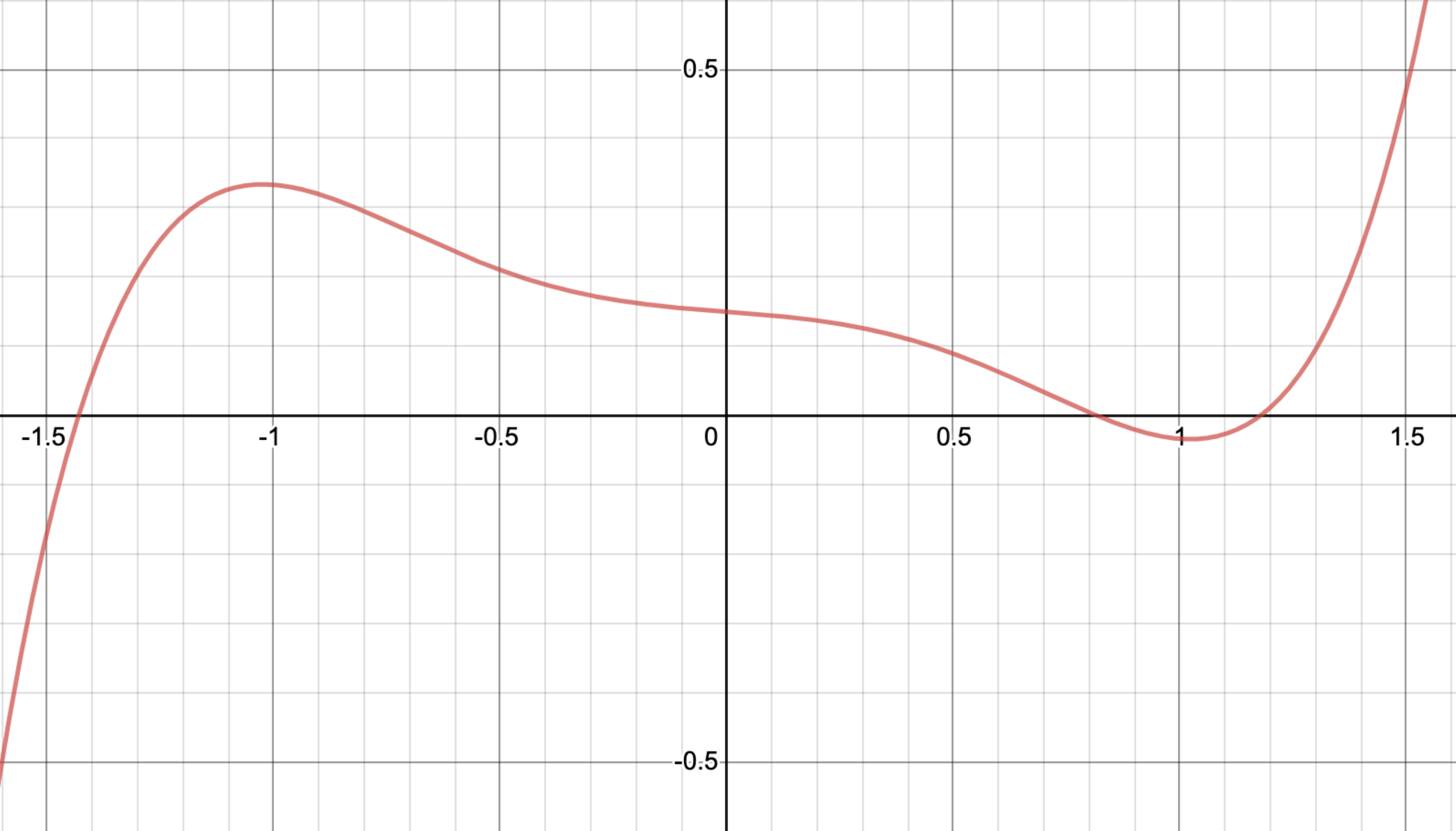}}
 \caption{\small{Phase plane of the secant map applied to two degree five polynomials (with only tree real roots) where the immediate basin of attraction of an internal root is multiply connected. In both cases, color blue and green refers to the attracting basins of the external roots, while red correspond to the attracting basin of the internal root $\alpha$. We use {\it pink} to emphasize the immediate basin of attraction of the internal root.  Range of the phase planes  [-1.5,1.5]x[-1.5,1.5]. We also show in (c) and (d) the graph of each polynomial.}}
    \label{fig:multiply_connected}
    \end{figure}

\begin{thmB}
Let $\alpha_1$ be an internal root of $p$ and let $\alpha_0 < \alpha_1 < \alpha_2$ be simple consecutive roots of $p$. Assume that $p$ has only one inflection point in the interval $(\alpha_0, \alpha_2)$, provided the external boundary is piecewise smooth. Then the immediate basin of attraction $\mathcal A^*(\alpha_1)$  is simply connected.
\end{thmB}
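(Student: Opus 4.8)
\medskip

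The plan is to reduce simple connectivity to the absence of holes and then to rule out holes using the constrained shape of $p$. Recall that a bounded, open, connected subset of $\mathbb{R}^2$ is simply connected if and only if it has no hole, i.e. if and only if every bounded component of its complement meets the unbounded one. By Theorem~A the external boundary of $\mathcal{A}^*(\alpha_1)$ is the hexagon-like polygon with lobes whose vertices are the focal points $Q_{i,j}$; let $\Omega$ denote the bounded region it encloses, so that $\mathcal{A}^*(\alpha_1)\subseteq\Omega$ and $\partial\Omega\subseteq\partial\mathcal{A}^*(\alpha_1)$. A hole of $\mathcal{A}^*(\alpha_1)$ is then precisely a point of $\Omega$ that does not belong to $\mathcal{A}(\alpha_1)$ (together with its complementary component). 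Hence it suffices to prove the inclusion $\Omega\subseteq\mathcal{A}(\alpha_1)$: once every point enclosed by the external boundary is known to converge to $(\alpha_1,\alpha_1)$, the connected set $\Omega$ containing the fixed point lies in $\mathcal{A}^*(\alpha_1)$, forcing $\mathcal{A}^*(\alpha_1)=\Omega$ and leaving no holes.

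To obtain this inclusion I would first extract the geometric consequence of the hypothesis. Since $\alpha_0,\alpha_1,\alpha_2$ are consecutive simple roots, Rolle's theorem forces a critical point of $p$ in each of $(\alpha_0,\alpha_1)$ and $(\alpha_1,\alpha_2)$; as $p''$ changes sign exactly once in $(\alpha_0,\alpha_2)$, the derivative $p'$ is unimodal there and these are the only two critical points, one a local maximum and one a local minimum. Thus on $(\alpha_0,\alpha_2)$ the graph of $p$ has the ``cubic-like'' profile: one hump on each side of $\alpha_1$ and a single change of concavity at the inflection point. The local theory already quoted in the introduction gives a neighbourhood $D$ of $(\alpha_1,\alpha_1)$ contained in $\mathcal{A}(\alpha_1)$; the single convex/concave transition then lets me enlarge $D$ to a forward-invariant absorbing box adapted to the two extrema, on whose convex (resp. concave) sides the secant step is monotone and strictly contracting toward the root.

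The decisive step is to show that every point of $\Omega$ is eventually swept into $D$, equivalently that no forward orbit starting in $\Omega$ is trapped off the diagonal. Here I would analyse the set of non-definition $\{p(x)=p(y),\,x\neq y\}$, whose branches are governed by the number of solutions of $p(x)=p(y)$ and hence by the convexity pattern of $p$. For the cubic-like profile this configuration has the simplest possible branching, so the arcs of it and of its $S$-preimages that enter $\Omega$ terminate on $\partial\Omega$ rather than closing up into loops interior to $\Omega$. Consequently no bounded complementary island (which would have to be bounded by such invariant arcs and by the prefocal structure attached to the $Q_{i,j}$) can form inside $\Omega$, and the orbit of any point of $\Omega$ must reach the absorbing set $D$.

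The hard part is exactly this last step. The secant map has a vanishing denominator and is non-injective: near each focal point $Q_{i,j}$ it folds a whole arc onto a single point of the associated prefocal curve, and it is precisely this folding that manufactures the holes visible in Figure~\ref{fig:multiply_connected}. The crux of the argument is therefore a careful count, controlled by the single inflection point, showing that no prefocal branch re-enters $\Omega$ to bound an isolated island; with two or more inflection points extra branches of $\{p(x)=p(y)\}$ appear, their preimages can close up inside $\Omega$, and the simple-connectivity conclusion genuinely fails.
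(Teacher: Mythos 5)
Your reduction is fine as far as it goes: proving that every point enclosed by the external boundary lies in $\mathcal{A}(\alpha_1)$ would indeed force $\mathcal{A}^*(\alpha_1)$ to coincide with the enclosed region $\Omega$ and hence be simply connected. But your proof stops exactly where the theorem begins. The two claims that carry all the weight --- that the arcs of $\delta_S$ and of its $S$-preimages entering $\Omega$ ``terminate on $\partial\Omega$ rather than closing up into loops'', and that consequently ``the orbit of any point of $\Omega$ must reach the absorbing set $D$'' --- are asserted, not proven, and you say yourself that this is the hard part. Moreover, the mechanism you point at is not the one that creates holes. Holes in the immediate basin are not bounded by loops of $\{p(x)=p(y)\}$: they are produced by the \emph{folding} of $S$, i.e. by the critical curves ${\rm LC}_{-1}$ (where $\partial q/\partial x=0$, a set disjoint from $\delta_S$) and ${\rm LC}=S({\rm LC}_{-1})$. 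And ``every orbit in $\Omega$ reaches $D$'' is a global convergence statement essentially as strong as the theorem itself; it would have to exclude, for instance, orbits attracted to the stable set of the 4-cycle that Theorem A places on $\partial\mathcal{A}^*(\alpha_1)$, so it cannot be waved through by a ``careful count'' that is never performed.

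The paper closes this gap with a finite, local argument that never needs global convergence. The single inflection point gives two facts: (i) any point of $R$ has at most two $S$-preimages in $R$ (Lemma \ref{lem:inverse}: three preimages would put four collinear points on the graph of $p$, forcing two inflection points); (ii) on each horizontal line $H_{y_0}$ the map $\varphi_{y_0}$ has exactly one critical point $x^\star(y_0)$, so the image of a horizontal segment is folded exactly once (Lemma \ref{lem:vert}). If a hole $U$ existed, a horizontal line through $U$ would contain a segment $B$ with endpoints $x_1,x_2$ on $\partial\mathcal{A}^*(\alpha_1)$ and with the fold point $x^\star$ inside $B$; the folded image of $B$ must then cross the external boundary along the arc $I$ (or $K$), producing a point $z$ of that arc with two preimages on $H_{y_0}$. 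But by the hexagon dynamics of Proposition \ref{prop:boundary_4cycle} and Corollary \ref{coro:4cycle}, such a point $z$ already has a preimage $w$ on the arc $\Lambda$ (resp. $J$) of $\partial\mathcal{A}^*(\alpha_1)$, giving three preimages in $R$ and contradicting (i). This preimage count --- not a census of branches of $\delta_S$ --- is the missing ingredient in your proposal, and it is precisely where the one-inflection-point hypothesis enters.
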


From Theorem A and Theorem B we can conclude the following corollary that applies to any real polynomial of degree $k$ with exactly $k$ simple real roots, as the family of Chebychev polynomials.

\begin{corC}\label{cor:simple_roots}
Let $p$ be a polynomial of degree $k$ with exactly $k$ simple real roots and one, and only one, inflection point between any three consecutive roots of $p$. Then for any internal root $\alpha$ of $p$ the immediate basin of attraction, $\mathcal A^*(\alpha)$, is a simply connected set and $\partial \mathcal A^*(\alpha)$ is an hexagon-like polygon with lobes where the vertices are focal points. Moreover, there exists a 4-cycle  in $\partial \mathcal A^*(\alpha)$. 
\end{corC}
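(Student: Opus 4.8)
The plan is to deduce the three assertions directly from Theorem A and Theorem B, the only genuine work being to check that their hypotheses hold for every internal root and to upgrade the containment of Theorem A(a) to an identification of the entire boundary.

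First I would fix an internal root $\alpha$ of $p$. Since $p$ has degree $k$ with exactly $k$ simple real roots, these roots are $\alpha_0 < \alpha_1 < \cdots < \alpha_{k-1}$, and $\alpha = \alpha_j$ for some $1 \leq j \leq k-2$. In particular $\alpha$ has immediate neighbours $\alpha_{j-1} < \alpha_j < \alpha_{j+1}$ among the roots, and these three consecutive simple roots play the role of $\alpha_0 < \alpha_1 < \alpha_2$ in Theorems A and B. Granting the standing hypothesis that the external boundary is piecewise smooth, Theorem A(a) gives that $\partial \mathcal A^*(\alpha)$ contains an hexagon-like polygon with lobes whose six vertices are the focal points $Q_{i,j}$, and Theorem A(b) gives a 4-cycle in $\partial \mathcal A^*(\alpha)$. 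This already yields the 4-cycle assertion and the hexagon structure, pending the sharpening discussed below.

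Next I would verify the hypothesis of Theorem B. The assumption that $p$ has exactly one inflection point between any three consecutive roots says precisely that $p$ has a unique inflection point in the interval $(\alpha_{j-1}, \alpha_{j+1})$, which is the hypothesis of Theorem B after replacing $(\alpha_0, \alpha_2)$ by $(\alpha_{j-1}, \alpha_{j+1})$. Hence Theorem B applies and $\mathcal A^*(\alpha)$ is simply connected, establishing the first conclusion of the corollary.

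Finally I would combine the two outputs. Since $\mathcal A^*(\alpha)$ is a bounded simply connected open set, its complement in the sphere is connected, so its boundary is connected and possesses no inner components; thus $\partial \mathcal A^*(\alpha)$ coincides with its external boundary. Together with Theorem A(a), this promotes "contains" to "is", giving that $\partial \mathcal A^*(\alpha)$ is exactly an hexagon-like polygon with lobes with vertices at the focal points. I expect the only delicate point to be this last upgrade: one must argue that simple connectivity genuinely excludes additional boundary components, so that the structure furnished by Theorem A exhausts $\partial \mathcal A^*(\alpha)$ rather than being a proper subset of it.
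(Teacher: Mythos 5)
Your core derivation is exactly the paper's: the authors justify Corollary C in a single sentence at the end of Section 5, citing Theorems A and B applied, for each internal root $\alpha_j$, to the triple of consecutive simple roots $\alpha_{j-1}<\alpha_j<\alpha_{j+1}$. Your verification of the inflection-point hypothesis and your (correct) observation that the piecewise-smoothness assumption on the external boundary must be carried over --- the corollary's statement silently omits it --- are precisely what that one-line proof implicitly does.

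The genuine problem is your final upgrade from ``contains'' to ``is''. It is true that a bounded, open, simply connected set has connected complement in the sphere, and from this one can deduce that $\partial\mathcal A^*(\alpha)$ is connected; but connectedness of the boundary does not force $\partial\mathcal A^*(\alpha)$ to coincide with the external boundary. The slit disk $\mathbb{D}\setminus[0,1)$ is bounded, open and simply connected, yet its boundary (the unit circle together with the segment $[0,1]$) strictly contains its external boundary in the paper's sense: the closure of the slit disk is the full closed disk, so the unbounded component $V$ of the complement of the closure has $\partial V$ equal to the circle alone. In other words, simple connectivity excludes bounded complementary components (holes), which is exactly what Theorem B delivers, but it does not exclude degenerate, slit-type pieces of $\partial\mathcal A^*(\alpha)$ protruding into the closure of the basin, which would belong to the boundary without being part of the hexagon-like polygon. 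So the step you yourself flagged as delicate does not close as argued; ruling out such pieces would need a further dynamical input (for instance, that every point of $\partial\mathcal A^*(\alpha)$ is accessible from the unbounded complementary component). To be fair, the paper does not close this either: Theorem A(a) asserts only containment, and the paper's proof of the corollary is a bare citation of Theorems A and B, so the identification of the entire boundary with the hexagon-with-lobes is an overstatement relative to what is actually proved there. Your proposal reproduces the paper's argument faithfully and is more honest about where the residual gap lies, but the topological patch you offer for that gap is not valid.
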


%\begin{thmC} 
%Let $\alpha$ be an external root of $p$. One of the following statements hold.
%\begin{itemize}
%\item If the degree of $p$ is even then
%  \begin{enumerate}
%     \item[(a)] $\mathcal A^*(\alpha)$  contains the boundary of  an square-like polygon where the two finite vertices are focal points.
%      \item[(b)] There exists an unbounded critical 3-cycle $(c,c) \to (\infty,c) \to (c,\infty) \to (c,c)$ where $c$ is a critical point of $p$, and every point in the sides of the square-like  converges under iteration to it except the vertices of the square-like polygon. 
%      \end{enumerate}
%\item If the degree of $p$ is odd  then
%\begin{enumerate}
%\item[(a)] $\mathcal A^*(\alpha)$ contains the boundary of a 3-holes adorned square-like polygon
%\item[(b)] There exists an unbounded critical 3-cycle and every point in the 3-holes of $\mathcal A^*(\alpha)$ converges to it. Moreover, there exists a 4-cycle  of  attracting the rest of the points in  the sides and the lobe of the 3-holes square-like polygon except the vertices.
%\end{enumerate}
%\end{itemize}
%
%\end{thmC} 
The paper is organized as follows. In section \ref{sec:planerational} we introduce the terminology and tools on rational iteration on the plane. In section \ref{sec:fourcycle} we classify the cycles of minimal period 4 of the secant map. In section \ref{sec:interior}  and  \ref{section:TB} we prove Theorem A and Theorem B, respectively.

 \section{Plane rational iteration} \label{sec:planerational}

For the sake of completeness we briefly summarize the notions, tools and results from  \cite{PlaneDenominator1,PlaneDenominator2,PlaneDenominator3} which are needed here. Consider the plane rational map given by  
\begin{equation}
T: \left( 
\begin{array}{l}
x \\
y
\end{array}
\right) \mapsto  
 \left( 
\begin{array}{l}
F(x,y) \\
N(x,y)/D(x,y)
\end{array}
\right),
\label{eq:PlaneDenominator}
\end{equation}
where $F$, $N$ and $D$ are differentiable functions. Set  
\[
\delta_T = \{ (x,y) \in \mathbb R^2 \, | \, D(x,y)=0\} \quad {\rm and} \quad E_T =\mathbb R^2 \setminus \bigcup_{n \geq 0} T^{-n}(\delta_T).
\]
Easily $T=(T_1,T_2):E_T\to E_T$ defines a smooth dynamical system given by the iterates of $T$; that is $\{(x_m,y_m):=T^m\left(x_0,y_0\right)\}_{m \geq 0}$ with $(x_0,y_0)\in E_T$ (see \cite{Tangent} for details). Clearly $T$ {\it sends} points of $\delta_T$ to infinity unless $N$ also vanishes. At those points the definition of $T$ is uncertain in the sense that the value depends on the path we choose to approach the point.  As we will see  they play a crucial role on the local and global dynamics of $T$.

We say that a point  $Q \in \delta_T \subset \mathbb R^2$ is a  {\bf focal point (of $T$)} if $T_2(Q)$ takes the form 0/0 (i.e. $N(Q)=D(Q)=0$), and there exists a smooth simple arc $\gamma:=\gamma(t),\ t\in (-\varepsilon,\varepsilon)$, with $\gamma(0)=Q$, such that $\lim_{t \to 0} T_2(\gamma)$ exists and is finite. Moreover, the straight line given by $L_Q=\{(x,y)\in \mathbb R^2 \ | \ x=F(Q)\}$ is called the {\bf prefocal line (over $Q$)}. 

Let $\gamma$ passing through $Q$,  not tangent to $\delta_T$, with slope $m$ (that is $\gamma^\prime(0)=m$). Then $T\left(\gamma\right)$ will be a curve passing, at $t=0$, through some finite point $(F(Q),y(m)) \in L_Q$. If $Q$ is {\it simple} (that is, $N_x(Q)D_y(Q)-N_y(Q)D_x(Q)\neq0$)
then there is a  one-to-one correspondence between the slope $m$ and points in the prefocal line $L_Q=\{(x,y)\in \mathbb R^2 \ | \ x=F(Q) \}$. Precisely (Figure \ref{fig:focals_simple} illustrates the one-to-one correspondence), 
\begin{equation} \label{eq:one-to-one}
y(m) = \frac{ N_x(Q) + m N_y(Q) }{D_x(Q)+m D_y(Q)} \quad {\rm or} \quad  m(y)= \frac{D_x(Q)  y - N_x(Q)}{N_y(Q)-D_y(Q)y}.
\end{equation}
 
 \begin{figure}[ht]
    \centering
     \includegraphics[width=0.75\textwidth]{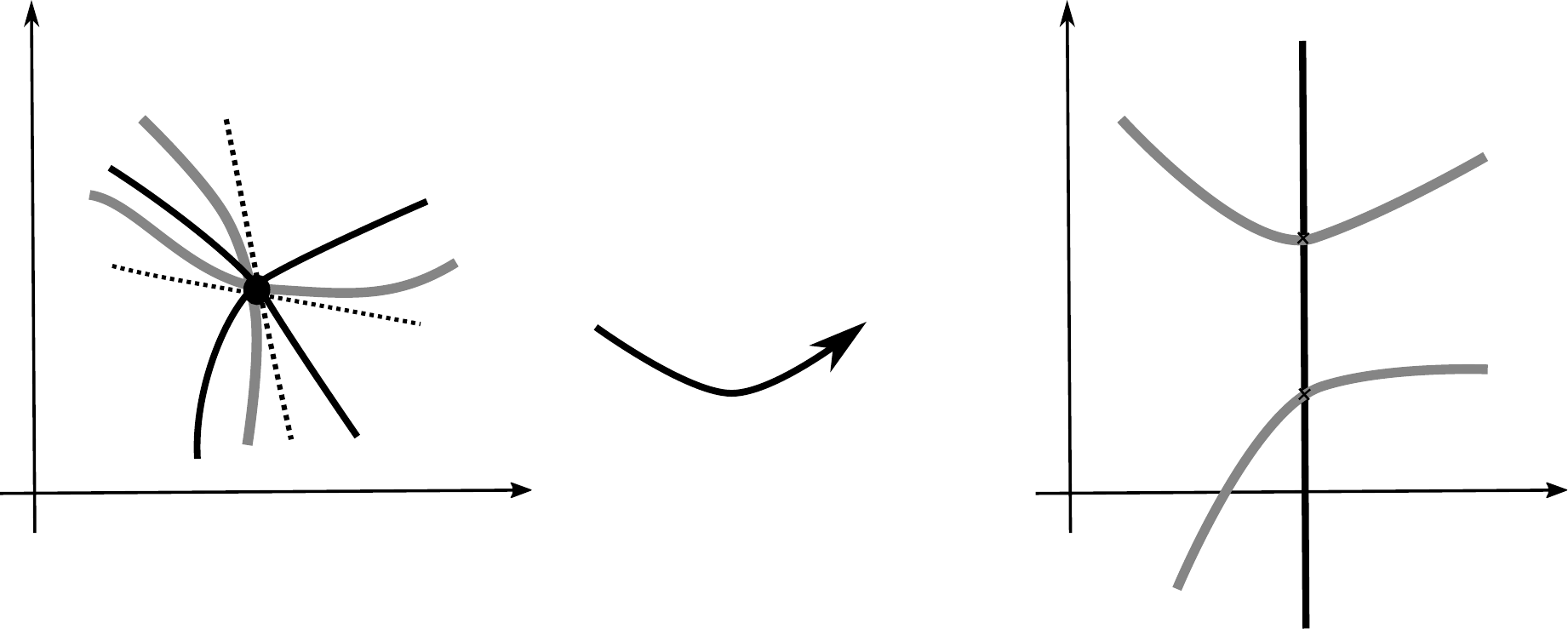}
    \put(-180,38) {\small $T$ } 
     \put(-305,112){\small $\gamma_1$}
     \put(-325,92){\small $\gamma_2$}
     \put(-255,58){\small $m_2$}
     \put(-275,35){\small $m_1$}
     \put(-278,80){\small $Q$}
     \put(-53,77){\small $y(m_1)$}
     \put(-53,42){\small $y(m_2)$}
     \put(-250,93){\small $ \delta_T\, [D(x,y)=0]$}
     \put(-253,37){\small $ N(x,y)=0$}
     \put(-15,100){\small $T(\gamma_1$)}
     \put(-12,52){\small $T(\gamma_2$)}
      \put(-50,3){\small $L_Q \, [  x = F(Q)] $}
           \caption{\small{Dynamics of $T$ near a simple focal point $Q$.}}
    \label{fig:focals_simple}
    \end{figure}

Among other dynamical aspects simple focal points are {\it responsible} of the presence of lobes and crescents  in the phase space of noninvertible maps, and in particular in the phase plane of the secant map (see Figure \ref{fig:chebyshev}).  This kind of phenomena occurs when a basin of attraction intersects the prefocal line. Again we refer to   \cite{PlaneDenominator1,PlaneDenominator2,PlaneDenominator3} for other details.
    
\begin{remark}
The name focal point used here to refer the points where the map $T$ is uncertain are also known as points of indeterminacy in complex and geometric analysis. 
\end{remark}

In Figure \ref{fig:focals_simple1} we sketch the mechanism for the creation of lobes in the phase plane of a noninvertible map with denominator.  If there exists an arc $\gamma$ crossing the prefocal line $L_Q$ in two different points $y(m_1)$ and $y(m_2)$ then a preimage of $T$ has a lobe issuing
from the focal point $Q$. If the map has two inverses and two focal points we can have two different lobes $T^{-1}_a(\gamma)$ and $T^{-1}_b(\gamma)$ issuing from $Q_a$ and $Q_b$. Also notice that if $\gamma$ is a lobe crossing the prefocal line $L_Q$ in one point $y(m)$ then an inverse $T^{-1}(\gamma)$ gives also a lobe from a focal point $Q$ but with two arcs having the same tangent $m$.

 \begin{figure}[ht]
    \centering
     \includegraphics[width=0.95\textwidth]{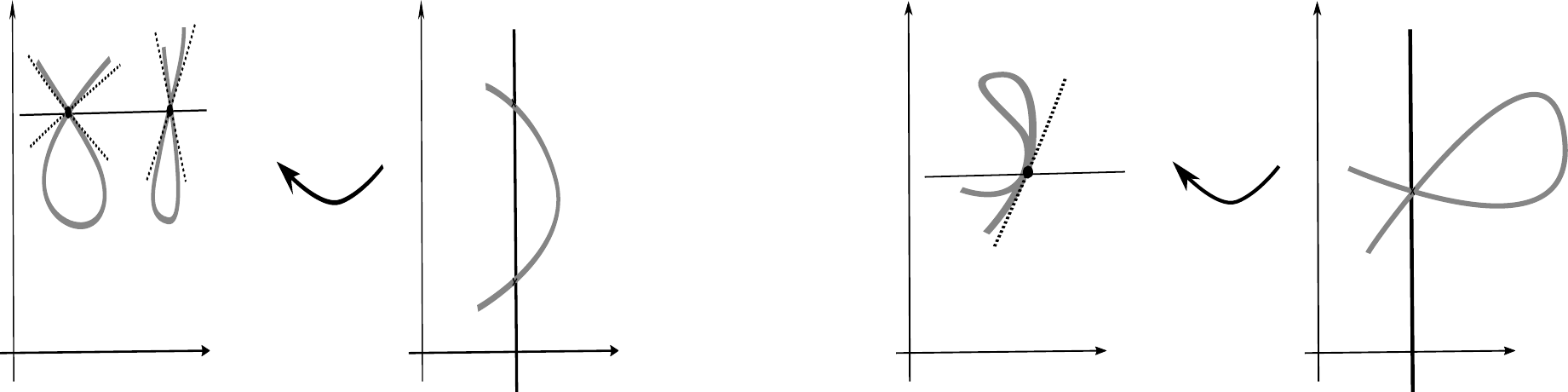}
    \put(-335,35) {\small $T^{-1}$ } 
          \put(-280,0){\small $L_Q $}
     \put(-280,78){\footnotesize $y(m_2)$}
     \put(-270,45){\small $\gamma$}
     \put(-280,25){\footnotesize $y(m_1)$}
     \put(-370,77){\footnotesize $Q_a$}
          \put(-415,76){\footnotesize $Q_b$}
               \put(-4,79){\small $\gamma$}
                   \put(-95,40) {\small $T^{-1}$ } 
               \put(-41,43){\footnotesize $y(m)$}
             \put(-143,50){\footnotesize $Q$}
               \put(-140,85){\footnotesize $m$}
      \put(-40,0){\small $L_Q $}

%     \put(-53,77){\small $y(m_1)$}
%     \put(-53,42){\small $y(m_2)$}
%     \put(-230,85){\small $L_Q$}
%     \put(-15,100){\small $T(\gamma_1$)}
%     \put(-12,52){\small $T(\gamma_2$)}
%      \put(-50,3){\small $L_Q \, [  x = F(Q)] $}
           \caption{\small{We sketch the mechanism responsible of the creation of lobes at focal points of a  map with denominator.}}
    \label{fig:focals_simple1}
    \end{figure}

%Color codes: {\r RED:} Comments for the authors. {\b BLUE:} Every sentence in blue is an assumption that we don't know how to prove.

In \cite{Tangent} the authors used this approach to study the particular case of the secant map, that is when $T=S$, defined in \eqref{eq:secant_real}, under the assumption that all real roots of $p$ are simple. In particular it was shown that the equation   
\begin{equation}\label{eq:q}
p(x)-p(y)= q(x,y)(x-y),
\end{equation}
defines $q$ as a polynomial (that is, $x-y$ divides the polynomial $p(x)-p(y)$). Therefore, the secant map can also be written as
\begin{equation}
\label{eq:def_S_2}
S (x,y) = \left( y, \frac{y q(x,y)-p(y)}{q(x,y)}\right).
\end{equation}

Moreover, for the secant map, the set $\delta_T$ reduces to
 \begin{equation}\label{eq:delta_s}
 \delta_S = \{ (x,y) \in \mathbb R^2 \,; \,  x \neq y \, \hbox{and } p(x)=p(y)\} \,  \cup \, \{ (x,x) \in \mathbb R^2 \, ; \, p'(x)=0  \},
 \end{equation}
and focal points are given by $Q_{i,j}=(\alpha_i,\alpha_j)$ with $i \neq j$ running over all possible pairs of the roots of $p$. Easily, the prefocal line of $Q_{i,j}$ is the vertical line  $L_j= \{ (x,y) \in \mathbb R^2 \, ; \, x = \alpha_j\}$.  The one-to-one correspondence at the focal point $Q_{i,j}$ described in \eqref{eq:one-to-one} writes as 
\begin{equation}\label{eq:y(m)_Sp}
y(m)=\frac{\alpha_j p'(\alpha_i)-\alpha_i p'(\alpha_j)m}{p'(\alpha_i)-p'(\alpha_j)m} \quad {\rm or} \quad 
m(y)= \frac{p'\left(\alpha_i\right)\left(\alpha_j-y\right)}{p'\left(\alpha_j\right)\left(\alpha_i-y\right)}.
\end{equation}

%
%\begin{theorem}[{\cite{PlaneDenominator3}}]
%\label{th:Gardini}
%Let $T$ be the rational map described in (\ref{eq:PlaneDenominator}). Let $Q$ be a non simple focal point and assume that one of the following conditions are satisfied
%\begin{enumerate}
%\item[(a)]  $N_x(Q)D_y(Q)=N_y(Q)D_x(Q)\neq 0$,  
% \item[(b)] $N_x(Q)=N_y(Q)=0$, $D_x(Q) D_y(Q)\neq 0$ and $(N_{xx}(Q),N_{xy}(Q),N_{yy}(Q)) \neq (0,0,0)$.
%\end{enumerate}
% Then:
% \begin{itemize}
%\item any arc $\gamma$ which is not tangent to the singular set $\delta_T$ in $Q$ with slope $m_{\star}$ is mapped by $T$ at the point 
%\[
%R=\left(F(Q),\frac{N_y(Q)}{D_y(Q)} \right);
%\]
%\item any arc $\gamma$ with slope $m_{\star}$ in $Q$ is mapped by $T$ into an arc crossing the focal line $x=F(Q)$, and there exists a one-to-one correspondence between the curvature $\chi$ of such arcs and the points $(F(Q),y)$ of the prefocal line.
%\end{itemize}
%\end{theorem}
%
%
%

\section{Periodic orbits of minimal period 4} \label{sec:fourcycle}

It can be proved  that the fixed points of the secant map applied to  the polynomial $p$  are given by the points $\left(\alpha,\alpha \right)$, where $\alpha$ is a root of $p$, and that they are all attracting. It is also known (see  \cite{BedFri,Tangent}) that the secant map has no periodic orbits of period two and three in the plane although  every critical point $c$ (i.e., $p'(c)=0$) has associated a periodic orbit of period three given by 
$$
(c,c)  \xrightarrow{S}  (c,\infty) \xrightarrow{S}  (\infty,c) \xrightarrow{S}  (c,c)$$
after properly extending $S$ to $\infty$.  Hence, it is natural to study the relevance of the four periodic orbits in the global dynamics. We already known that those periodic orbits might be attracting. See \cite{BedFri, Tangent} for precise statements. 

In this section we  study in detail the possible configurations of the period four orbits or 4-cycles, a key step to understand the boundary of the immediate basin of attraction of the fixed points of $S$. Assume that $S$ has a periodic orbit of (minimal) period 4 given by 
\begin{equation} \label{eq:4cycle}
(a,b) \xrightarrow{S} (b,c)  \xrightarrow{S} (c,d) \xrightarrow{S} (d,a) \xrightarrow{S} (a,b),
\end{equation}
where $a,b,c,d$ are real numbers. Under this notation we are describing the dynamics of the 4-cycle (as points in $\mathbb R^2$), but notice that we are not determining the relative position in $\mathbb R$ of the points $a,b,c$ and $d$ involved in the cycle. However, renaming points in the four cycle we can assume that $a$  is the value in the cycle with minimum value, that is, we can assume  without loss of generality  that  $a := \min\{a,b,c,d\}$ and the dynamics of the cycle is still given by \eqref{eq:4cycle}.

We recall that if $a,b,c,d$ are real numbers then the {\it cross ratio}, $\lambda_{(a,b\,;c,d)}$, is given by the expression
\begin{equation}\label{eq:crossratio}
\lambda:=\lambda_{(a,b\,;c,d)}=\frac{(c-a)(d-b)}{(c-b)(d-a)}.
\end{equation}
Easy computations show that 
\begin{equation} \label{eq:ratios}
\lambda_{(a,d\,;c,b)} = \frac{\lambda}{\lambda-1} \quad {\rm and} \quad \lambda_{(d,c\,;b,a)} = \lambda
\end{equation}

The next proposition classifies completely the possible types of 4-cycles (see Figure \ref{fig:4cycles_types}) depending on the relative position of the base points. 

\begin{prop}[{\bf Classification of  4-cycles}] \label{prop:configurations}
Assume that the secant map $S$ exhibits a 4-cycle as in \eqref{eq:4cycle}.
Then $\lambda = (-1+\sqrt{5})/2$ or $\lambda=  -(1+\sqrt{5})/2$.  The possible configurations (i.e, the relative position in $\mathbb R$ of the points $a,b,c,d$ involved in the cycle and  their images by $p$) are listed in Table \ref{table:types} and leads to four different types as described in Figure \ref{fig:4cycles_types}. Moreover, the four types of 4-cycles  are admissible.

%\begin{enumerate}
%\item [(a)] (Type I) We have   $a<b<d<c$ with $p(a)p(b)>0$, $p(c)p(d)>0$ and $p(a)p(c)<0$. Moreover, $p(a)>p(b)$ and $p(d)>p(c)$,  or $p(a)<p(b)$ and $p(c)>p(d)$. See Figure \ref{fig:4cycles_types} left.
%\item[(b)] (Type II) We have $b<a<c<d$ with $p(a),p(b),p(c)$ and $p(d)$ being either all positive or all negative.  Moreover,  $p(b)>p(a)$ and $p(d)>p(c)$, or $p(a)<p(b)$ and $p(c)<p(d)$. See Figure \ref{fig:4cycles_types} center.
%\item[(c)] (Type III) We have $a<c<b<d$ with $p(c),p(b),p(d)$ sharing the same sign which is different from the sign of $p(a)$. Moreover, $p(c)<p(b)<0$ and $p(d)<p(b)<0$, or $p(c)>p(b)>0$ and $p(d)>p(b)>0$. See Figure \ref{fig:4cycles_types} right.
%\end{enumerate}

\end{prop}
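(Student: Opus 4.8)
The plan is to first reduce the cycle conditions to clean scalar relations. Using the form \eqref{eq:def_S_2}, the second coordinate of $S(x,y)$ simplifies to the $x$-intercept of the secant line through $(x,p(x))$ and $(y,p(y))$, namely $(xp(y)-yp(x))/(p(y)-p(x))$. Writing $A=p(a)$, $B=p(b)$, $C=p(c)$, $D=p(d)$, each of the four steps of the cycle \eqref{eq:4cycle} becomes a single scalar equation, and after clearing denominators each collapses to a linear relation. For instance the step $(a,b)\to(b,c)$ reads $c\,(B-A)=aB-bA$, i.e. $B(c-a)=A(c-b)$, and cyclically
\[
\frac{A}{B}=\frac{c-a}{c-b},\qquad \frac{B}{C}=\frac{d-b}{d-c},\qquad \frac{C}{D}=\frac{a-c}{a-d},\qquad \frac{D}{A}=\frac{b-d}{b-a}.
\]

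Next I would compute $\lambda$. Multiplying the four relations around the cycle, the left-hand side telescopes to $1$, leaving a single identity purely in $a,b,c,d$:
\[
\frac{(c-a)(d-b)(a-c)(b-d)}{(c-b)(d-c)(a-d)(b-a)}=1,
\]
equivalently $(c-a)^2(d-b)^2=(c-b)(d-c)(d-a)(a-b)$. Dividing both sides by $(c-b)^2(d-a)^2$ and recognizing the cross ratio of \eqref{eq:crossratio}, the left side is $\lambda^2$, while the right side is $(d-c)(a-b)/\big((c-b)(d-a)\big)$, which by the elementary identity $1-\lambda=(a-b)(d-c)/\big((c-b)(d-a)\big)$ equals $1-\lambda$. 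Hence $\lambda^2=1-\lambda$, i.e. $\lambda^2+\lambda-1=0$, whose roots are exactly $(-1+\sqrt5)/2$ and $-(1+\sqrt5)/2$. This settles the first assertion.

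For the configurations I would normalize $a=\min\{a,b,c,d\}$, so that $c-a,\,d-a,\,b-a>0$. Then the sign of $\lambda$ equals the sign of $(d-b)/(c-b)$, so $\lambda=(-1+\sqrt5)/2>0$ forces $b$ to be extreme among $\{b,c,d\}$, while $\lambda=-(1+\sqrt5)/2<0$ forces $b$ to lie strictly between $c$ and $d$. Within each branch the precise magnitude of $|\lambda|$, together with the relabeling identities \eqref{eq:ratios} (which realize the swap $b\leftrightarrow d$ as $\lambda\mapsto\lambda/(\lambda-1)$ and so identify otherwise-duplicate orderings), pins down the admissible orderings of $a,b,c,d$; the four relations above then fix the sign pattern of $A,B,C,D$ up to a global sign, i.e. the relative positions of the images by $p$. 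Running through this finite bookkeeping should yield precisely the four types recorded in Table~\ref{table:types} and Figure~\ref{fig:4cycles_types}.

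Finally, for admissibility, for each type one can choose positions $a,b,c,d$ with the prescribed ordering and the required golden cross ratio; the four relations then determine $A,B,C,D$ consistently, consistency being guaranteed exactly by $\lambda^2+\lambda-1=0$ (which is what made the telescoping product equal $1$), and interpolating the four points $(a,A),\dots,(d,D)$ by a polynomial produces a map carrying the desired $4$-cycle, once one checks that the secant denominators do not vanish along the orbit. I expect the genuine difficulty to lie neither in the cross-ratio computation nor in the relabeling, but in this last classification-and-realization step: making the sign/ordering case analysis exhaustive and mutually exclusive so that exactly four types survive, and exhibiting concrete (low-degree or Chebyshev-type) polynomials certifying that each type is actually attained.
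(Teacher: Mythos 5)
Your reduction to the four ratio relations and your derivation of $\lambda^2+\lambda-1=0$ are correct, and they coincide with the paper's own first step: the paper obtains exactly your four equations (its \eqref{eq:secant4}), multiplies them around the cycle, and packages the resulting identity via \eqref{eq:ratios} as $\lambda^{2}/(1-\lambda)=1$, where you instead expand and simplify by hand. Up to that point the two arguments are the same, and your algebra checks out.

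The gap is everything after that. Most of the proposition's content is the classification — which of the six orderings (with $a$ minimal) are compatible with a 4-cycle, and what ordering of $p(a),p(b),p(c),p(d)$ each compatible case forces — together with the admissibility of all four types; your proposal proves neither, and says so ("running through this finite bookkeeping should yield\dots", "I expect the genuine difficulty to lie\dots in this last step"). The paper settles the classification with a geometric ingredient you never state: writing $S(x_0,y_0)=(y_0,z_0)$, the intercept $z_0$ lies in $(x_0,y_0)$ iff $p(x_0)p(y_0)<0$ and outside iff $p(x_0)p(y_0)>0$; running this through the six orderings eliminates $a<b<c<d$ and $a<d<c<b$ and yields the sign patterns of Table \ref{table:types} and Figure \ref{fig:4cycles_types}. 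Your own framework could in fact close this more cheaply — in both discarded orderings the denominator $(c-b)(d-c)(a-d)(b-a)$ of your telescoped identity is negative (equivalently $\lambda>1$), contradicting $\lambda\in\{(\sqrt5-1)/2,\,-(\sqrt5+1)/2\}$, and in the four surviving orderings the exact values, not just the signs, of your ratios determine the $p$-value orderings — but none of this appears in your text, and the vague appeal to "the magnitude of $|\lambda|$ together with the relabeling identities" is not an argument. For admissibility the paper does concretely what you only sketch: fix $a=1$, $b=2$, $c=3$, determine $d$ from $\lambda=(\sqrt5-1)/2$, solve the homogeneous linear system \eqref{eq:secant4} for $p(a),\dots,p(d)$ (one degree of freedom, e.g. $p(d)=-1$), and Newton-interpolate to obtain the cubic $p^{I}$ of \eqref{eq:pol}, with analogous cubics for types II--IV. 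Your interpolation plan is the right one, and the denominator worry you raise is vacuous (the relations give $p(a)\neq p(b)$, etc., since the four ratios never equal $1$ when $a,b,c,d$ are distinct), but as written the classification and the realization — the bulk of the statement — remain unproved.
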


 \begin{figure}[ht]
    \centering
     \includegraphics[width=0.45\textwidth]{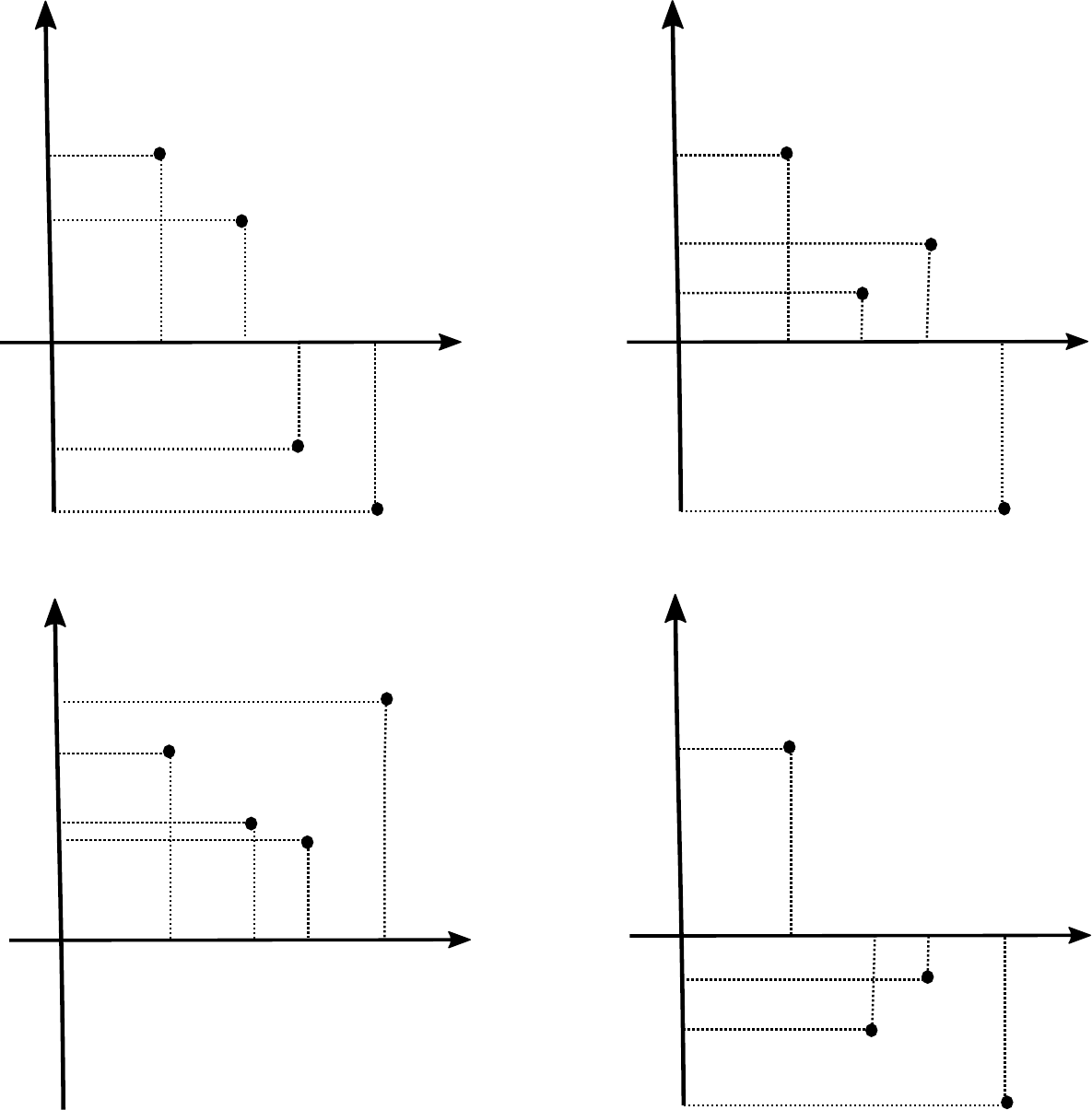}
     %Labels up
      \put(-146,142) {\small $d$ } 
    \put(-132,142) {\small $c$ } 
    \put(-158,132) {\small $b$ } 
    \put(-174,132) {\small $a$ }
  \put(-207,170) {\tiny $p(a)$ }
    \put(-207,160) {\tiny $p(b)$ }
    \put(-207,120) {\tiny $p(d)$ }
    \put(-207,110) {\tiny $p(c)$ }
     \put(-58,25) {\small $a$ } 
      \put(-42,35) {\small $c$ } 
        \put(-32,34) {\small $b$ } 
        \put(-19,34) {\small $d$ } 
    \put(-92,64) {\tiny $p(a)$ }
    \put(-92,23) {\tiny $p(b)$ }
    \put(-92,13) {\tiny $p(c)$ }
    \put(-92,1) {\tiny $p(d)$ }  
          \put(-170,24) {\small $a$ } 
      \put(-154,24) {\small $d$ } 
      \put(-143,24) {\small $b$ } 
      \put(-130,24) {\small $c$ }     
        \put(-205,63) {\tiny $p(a)$ }
    \put(-205,52) {\tiny $p(d)$ }
    \put(-205,44) {\tiny $p(b)$ }
    \put(-205,73) {\tiny $p(c)$ }
          \put(-58,133) {\small $a$ } 
      \put(-44,133) {\small $c$ } 
        \put(-31,132) {\small $d$ } 
        \put(-19,142) {\small $b$ } 
        \put(-91,174) {\tiny $p(a)$ }
    \put(-91,157) {\tiny $p(d)$ }
    \put(-91,147) {\tiny $p(c)$ }
    \put(-91,112) {\tiny $p(b)$ }     
        \put(-300,135) {\small $\lambda = \frac{\sqrt{5}-1}{2} >0 $ }     
                \put(-300,35) {\small $\lambda=  -  \frac{\sqrt{5}+1}{2} <0$ }     
           \caption{\small{The four different types of 4-cycles for the secant map.  We show type I in top-left and type II in top-right,  corresponding to a cross ratio  $\lambda >0$. We show type III in bottom-left  and type IV in bottom-right, corresponding to cross ratio $\lambda <0$}}
    \label{fig:4cycles_types}
    \end{figure}

\begin{table}[h] 
\begin{tabular}{|c|l|l|l|l|}
\hline
\multirow{2}{*}{$\lambda>0$}                       & $a<b<c<d$ & $\star$  & $a<d<c<b$ & $\star$  \\ \cline{2-5} 
                                                 %  & $b<a<d<c$ & $\star$  & $b<c<d<a$ & $\star$  \\ \cline{2-5} 
                                                   & $a<b<d<c$ & Type I   & $a<c<d<b$ & Type II  \\ \hline
                                                  % & $a<c<d<b$ & Type III & $b<d<c<a$ & Type III \\ \hline
\multicolumn{1}{|l|}{\multirow{1}{*}{$\lambda<0$}} & $a<d<b<c$ & Type III  & $a<c<b<d$ & Type IV   \\ \hline
%\multicolumn{1}{|l|}{}                          %   & $a<c<b<d$ & Type III & $b<d<a<c$ & Type III \\ \hline
\end{tabular}
\vglue 0.3cm
\caption{\small All possible configurations of a 4-cycle and their corresponding type. Here $\star$ means incompatible configuration with a 4-cycle. See Proposition \ref{prop:configurations}.} 
\label{table:types}
\end{table}

\begin{proof}
Using the definition of the secant map and the configuration given in \eqref{eq:4cycle} we easily have that 
{\small  \[
c =     b - p(b) \displaystyle \frac{b-a}{p(b)-p(a)} \quad  
d  =   c - p(c) \displaystyle \frac{c-b}{p(c)-p(b)}\quad 
a =  d - p(d) \displaystyle \frac{d-c}{p(d)-p(c)}\quad
b = a  - p(a) \displaystyle \frac{a-d}{p(a)-p(d)} 
\]
}

\noindent which is equivalent to  

\begin{equation}\label{eq:secant4}
\frac{p(a)}{p(b)} =    \frac{c-a}{c-b} ,  \quad  \frac{p(b)}{p(c)} =    \frac{d-b}{d-c} ,  \quad  \frac{p(c)}{p(d)} =    \frac{a-c}{a-d},   \quad \frac{p(d)}{p(a)} =    \frac{b-d}{b-a}.
\end{equation}
Multiplying both sides  of these four equations we obtain that 
\[
1 =  - \left[  \frac{(c-a)(b-d)}{(c-d)(b-a)}  \right]  \left[ \frac{(b-d)(a-c)}{(b-c)(a-d)} \right]  = -  \lambda_{(a,d;c,b)} \lambda_{(d,c;b,a)}=\frac{\lambda^2}{1-\lambda},
\]
and so $\lambda\in \{(-1+ \sqrt{5})/2,(-1- \sqrt{5})/2\}$.

Now we turn the attention to the classification of a 4-cycle of the secant map. Firstly, let us notice the following property of the secant map. Given two points $x_0<y_0$ the secant map is given by $S(x_0,y_0)=(y_0,z_0)$ where $\left(z_0,0\right)$ is the intersection between the line passing thorugh the points $(x_0,p(x_0))$ and $(y_0,p(y_0))$, and the horizontal line $y=0$. Thus, if $z_0\in \left(x_0,y_0\right)$  then $p\left(x_0\right)p\left(y_0\right)<0$ while if $z_0 \not\in \left(x_0,y_0\right)$ then $p\left(x_0\right)p\left(y_0\right)>0$. 

We need to consider 6 cases depending on the relative position of the points ${a,b,c,d}$ on the real line since we have assumed that $a < \min\{b,c,d\}$. It follows from the definition of the cross ratio $\lambda(a,b;c,d)$  \eqref{eq:crossratio}  that   $\lambda$ is positive  if and only if one and only one of $c$ and $d$ lays between $a$ and $b$. So, there are four cases where $\lambda>0$ and two cases where $\lambda<0$.
\vglue 0.2truecm
 
{\bf Case 1. $a<b<c<d \ (\lambda>0)$.}  We have $S(a,b)=(b,c)$ and $c\not\in (a,b)$. So $p(a)p(b)>0$. Since $S(b,c)=(c,d)$ and $b<c<d$ we get $p(c)p(b)>0$. Also, since $S(c,d)=(d,a)$ and $a \not\in(c,d)$, we get $p(a)p(b)p(c)p(d)>0$. Finally, since $S(d,a)=(a,b)$ and $b\in(a,d)$ we have $p(a)p(d)<0$, a contradiction. Thus there is no 4-periodic orbits with this configuration.
\vglue 0.1truecm

{\bf Case 2. $a<b<d<c \ (\lambda>0)$.}  We have 
$S(a,b)=(b,c)$ with $c>b$. So $p(a)p(b)>0$ (we assume $p(a)>p(b)>0$, the case $p(a)<p(b)<0$ follows similarly). Since $S(b,c)=(c,d)$ and $d\in(b,c)$ then $p(c)<0$ (we have assumed $p(b)>0$). Also we have $S(c,d)=(d,a)$ and since $a<d<c$ then $p(c)<p(d)<0$. Finally $S(d,a)=(a,b)$ which is compatible with the fact that $p(a)p(d)<0$. This 4-cycle corresponds to {\it type I}. See Figure  \ref{fig:4cycles_types} (first row  left). 
\vglue 0.1truecm

{\bf Case 3. $a<c<d<b \ (\lambda>0)$.}  We have $S(a,b)=(b,c)$ with $c\in (a,b)$. So $p(a)p(b)<0$ (moreover, assuming that $p(a)>0$, we have that $p(b)<0$; the case $p(a)<0$ follows similarly). Since  $S(b,c)=(c,d)$ and $d\in(c,b)$ we have $p(c)>0$. Since $S(c,d)=(d,a)$, $p(c)>0$ and $a<c<d$ we have $p(d)>p(c)>0$.  Finally, since $S(d,a)=(a,b)$ with $a<d<b$ we get  $p(a)>p(d)>p(c)>0$ and $p(b)<0$, a compatible configuration which corresponds to {\it type III}. See Figure  \ref{fig:4cycles_types} (first row right). 

\vglue 0.1truecm

{\bf Cases 4. $a<d<c<b \ (\lambda>0)$.}   This case leads to an incompatible configuration and we left the details to the reader.

\vglue 0.1truecm

{\bf Case 5. $a<d<b<c \ (\lambda<0)$.}  We have $S(a,b)=(b,c)$ with $a<b<c$. So $p(a)p(b)>0$ (moreover, assuming that $p(b)>0$, we have that $p(a)>p(b)$; the case $p(b)<0$ follows similarly). Since  $S(b,c)=(c,d)$ and  $d \notin (b,c)$ we conclude that $p(c)>p(b)>0$. Since $S(c,d)=(d,a)$ and $a<d<c$ we have $0<p(d)<p(c)$. Hence $p(a),p(b),p(c)$ and $p(d)$ are all positive. Finally, since $S(d,a)=(a,b)$ with $a<b<d$, we conclude that this configuration is possible and corresponds to {\it type II} (the case $p(b)<0$ is symmetric with $p(a),p(b),p(c)$ and $p(d)$ all negative). See Figure  \ref{fig:4cycles_types} (second row left). 

\vglue 0.1truecm

{\bf Case 6. $a<c<b<d \ (\lambda<0)$.}  We have $S(a,b)=(b,c)$ with $c\in (b,a)$. So $p(a)p(b)<0$ (moreover, assuming that $p(a)>0$, we have that $p(b)<0$; the case $p(a)<0$ follows similarly). Since  $S(b,c)=(c,d)$ and $d\notin(c,b)$ we have $p(c)<p(b)<0$. Since $S(c,d)=(d,a)$, $p(c)<0$ and $a<c<d$ we have $p(d)<p(c)<0$.  Finally, since $S(d,a)=(a,b)$ with $a<b<d$ we get  $p(d)<p(c)<p(b)<0$ and $p(a)>0$, a compatible configuration which corresponds to {\it type III}. See Figure  \ref{fig:4cycles_types} (second row right).

\vglue 0.1truecm

In Figure \ref{fig:4cycles_types1} we show  the relative position of a 4-cycle of the secant map 
$ (a,b) \to  (b,c)  \to (c,d) \to  (d,a) \to (a,b)$ in the phase plane.  According to the different cases (see Table \ref{table:types}) we observe that cycles of  types I and II  are arranged  making a clockwise turn  while  this is not the case in  types III and IV since they  flip four times around the line $y=x$. 

 \begin{figure}[ht]
    \centering
     \includegraphics[width=0.45\textwidth]{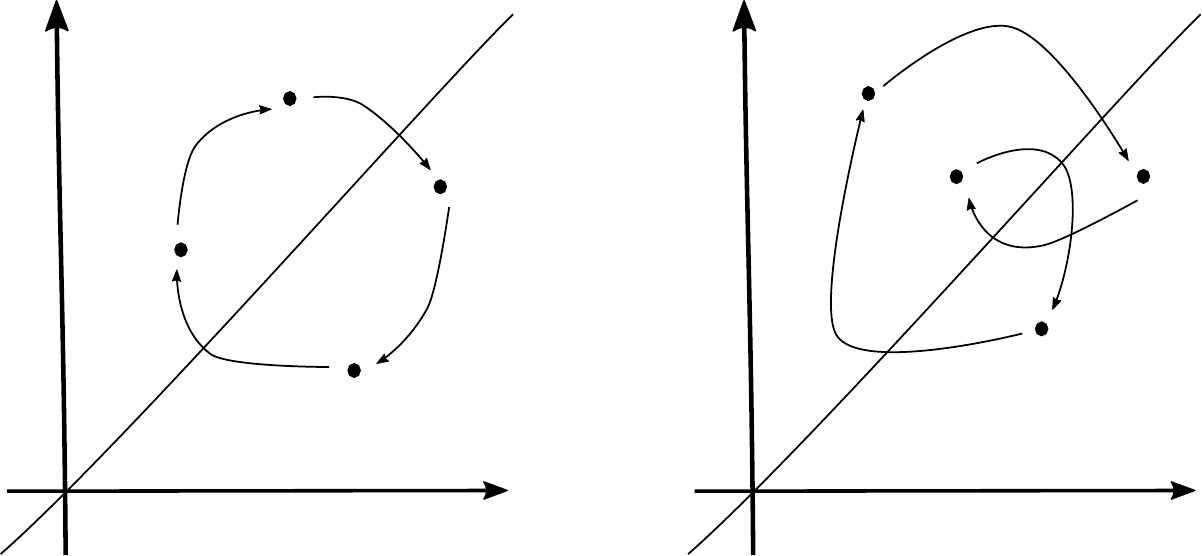}
  \caption{\small{Relative position of a four cycle with respect to the diagonal line $y=x$. On the left hand side types I and II and on the right hand side types III and IV. }}
    \label{fig:4cycles_types1}
    \end{figure}

We finally show that the four different types of  4-cycles are admissible. In fact we show how  to numerically built a concrete polynomial having a 4-cycle of Type I  and we leave the details of the other cases to the reader since the strategy is quite similar.

We choose  the  configuration: $a<b<d<c$ which corresponds to $\lambda >0$. We fix $a=1, b=2$ and $c=3$. Since we know that $\lambda=(\sqrt{5}-1)/2$ we get $d \approx 2.447213595$. Now we need to determine the value of $p(a),p(b),p(c)$ and $p(d)$ so that \eqref{eq:4cycle} is satisfied. From \eqref{eq:secant4} we can easily compute $p(a),p(b),p(c)$ and $p(d)$. Indeed it is an homogeneous linear system of equations with one degree of freedom. So fixing $p(d)=-1$ we obtain  
$p(a)\approx 2.23606798$, $p(b)\approx 1.118033989 $ and $p(c)\approx -1.381966011$. Finally we use Newton interpolation to get 
{\small
\begin{equation} \label{eq:pol}
p^{I}(x)=2.23606798  -1.11803390 (x-1) -0.6909830(x-1)(x-2)+3.27254249(x-1)(x-2)(x-3).
\end{equation}
}
According to the arguments above the secant map $S_{P^{I}}$ has a  4-cycle  of Type I (see Figure \ref{fig:type_I}). Similarly $S_{P^{II}},\, S_{P^{III}} $ and $S_{P^{IV}}$ have 4-cycle  of Type II, III  and IV, respectively, where 

\[
\begin{array} {rl} 
p^{II}(x) & =2.818 -5.236(x-1)+4.3316(x-2)(x-1)-16.106(x-2)(x-1)(x-3) \\
p^{III}(x) & =2.236 - 1.118(x-1)+1.809(x-2)(x-1)-0.4774(x-2)(x-1)(x-3) \, \\
p^{IV}(x) & =1.618 -2.118(x-1)+0.809(x-2)(x-1)-1.7135(x-2)(x-1)(x-3).
\end{array}
\]

\end{proof}

\begin{figure}[ht]
    \centering
     \includegraphics[width=0.45\textwidth]{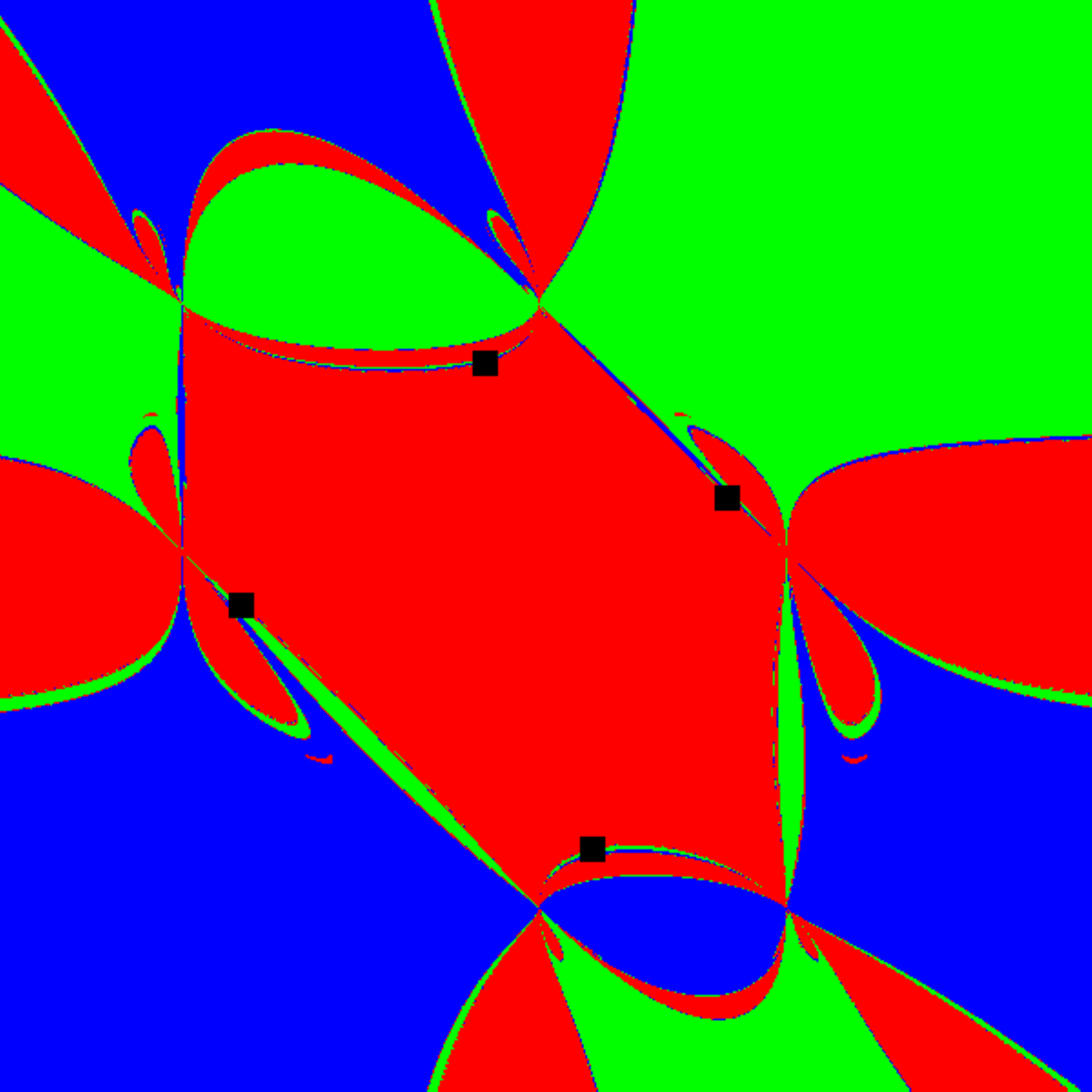}
   \put(-150,85) {\small $(1,2)$ } 
   \put(-143,105) {\Huge $ \nearrow$ } 
     \put(-120,120) {\small $(2,3)$ } 
        \put(-97,110) {\Huge $ \searrow$ } 
         \put(-95,95) {\small $(3,2.44)$ } 
                 \put(-90,70) {\Huge $ \swarrow$ } 
         \put(-99,49) {\small $(2.44,1)$ } 
                 \put(-130,60) {\Huge $ \nwarrow$ } 

%     \put(-305,112){\small $\gamma_1$}
%     \put(-325,92){\small $\gamma_2$}
%     \put(-255,58){\small $m_2$}
%     \put(-275,35){\small $m_1$}
%     \put(-278,80){\small $Q$}
%     \put(-53,77){\small $y(m_1)$}
%     \put(-53,42){\small $y(m_2)$}
%     \put(-250,93){\small $ \delta_T\, [D(x,y)=0]$}
%     \put(-253,37){\small $ N(x,y)=0$}
%     \put(-15,100){\small $T(\gamma_1$)}
%     \put(-12,52){\small $T(\gamma_2$)}
%      \put(-50,3){\small $L_Q \, [  x = F(Q)] $}
           \caption{\small{Phase plane of the secant map applied to the polynomial $p^I$. We denote each point in the four cycle $(1,2) \xrightarrow{S}(2,3)  \xrightarrow{S} (3,2.447213595)  \xrightarrow{S} (2.447213595,1)  \xrightarrow{S} (1,2)$ with an small black square. }}
    \label{fig:type_I} 
    \end{figure}

%In Figure \ref{fig:4cycles_types1} we show the relative position of the three different types of 4-cycles located in the dynamical plane $\mathbb R^2$.
%Given a 4-cycle we have that $(a,b) \xrightarrow{S} (b,c)  \xrightarrow{S} (c,d)  \xrightarrow{S} (d,a)  \xrightarrow{S} (a,b)$. 

In Figure \ref{fig:type_I} we  show the phase plane of the secant map applied to the polynomial $p^I$. This polynomial exhibits three roots. We also show the four cycle 
\[
(1,2) \xrightarrow{S} (2,3)  \xrightarrow{S} (3,2.447213595) \xrightarrow{S} (2.447213595,1) \xrightarrow{S} (1,2).
\]
Every point  in the four cycle of Type I is shown in the picture with a small black square and we will see in the next sections the crucial role of this 4-cycle with the basin of attraction of the internal root of $p^I$.

\section{Proof of Theorem A}\label{sec:interior}

Firstly we prove the topological description of the boundary of the immediate basin of attraction of an internal root, that is Theorem A(a). At the end of the section we prove Theorem A(b).

Hereafter we fix the following notation.  We assume, without lost of generality,  that $\alpha_0 < \alpha_1 <\alpha_2$ are three consecutive real simple roots of $p$ and $p'(\alpha_0)>0$, $p'(\alpha_1)<0$ and $p'(\alpha_2)>0$. So  $p(x)>0$ for all  $x\in (\alpha_0,\alpha_1)$ and  $p(x)<0$ for all $x\in (\alpha_1,\alpha_2)$. Moreover, $p$ should have at least one critical point in each open interval $(\alpha_0,\alpha_1)$ and $(\alpha_1,\alpha_2)$. We denote by  $c_1$ the largest critical point of $p$ in $(\alpha_0,\alpha_1)$ and by $c_2$ the smallest critical point of $p$ in 
$(\alpha_1,\alpha_2)$ (equivalently the open interval $\left(c_1,c_2\right)$ is free of critical points). Of course $\alpha_1$ is the {\it target} internal root of Theorem A. See Figure \ref{fig:polynomial}.

Following the notation of Section \ref{sec:planerational} (see also \cite{Tangent}) one can show that the focal points of $S$ are given by $Q_{i,j}=(\alpha_i,\alpha_j), \ i \neq j \in \{0,1,2\}$, and that each $Q_{i,j}$ has the vertical line  $L_j=\{(x,y)\in \mathbb R^2 \ | \ x=\alpha_j\}$ as its prefocal line. Moreover, we also known that $\mathcal A^*(\alpha_1)$ is bounded. Next lemma makes this condition more precise.

\begin{figure}[ht]
    \centering
     \includegraphics[width=0.35\textwidth]{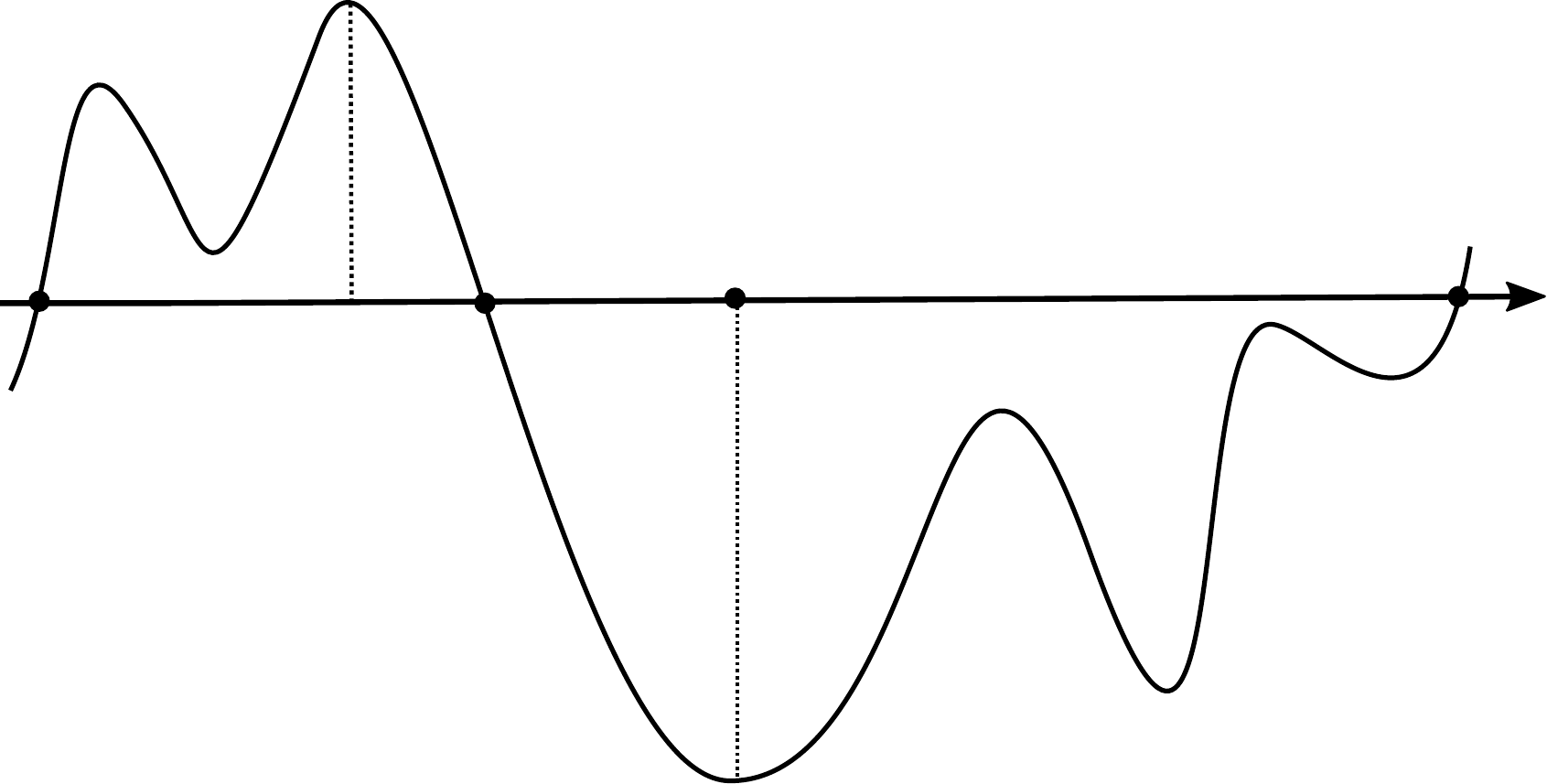}
    \put(-85,52) {\small $c_2$ } 
        \put(-105,50) {\small $\alpha_1$ } 
    \put(-148,50) {\small $\alpha_0$ } 
      \put(-20,52) {\small $\alpha_2$ }
          \put(-125,42) {\small $c_1$ }
          \put(-29,16) {\small $y=p(x)$}
             \caption{\small{Sketch of the polynomial $p$ with an internal root $\alpha_1$.}}
    \label{fig:polynomial}
    \end{figure}

\begin{lemma}\label{lem:bounded}
Let $\alpha_0<\alpha_1<\alpha_2$ be three real simple consecutive roots of $p$. Then $\mathcal A^*(\alpha_1) \subset R$ where $R:=\{(x,y)\in \mathbb R^2 \ | \ \alpha_0<x<\alpha_2, \, \alpha_0<y<\alpha_2\}$. 
%is contained in the open square $(\alpha_0,\alpha_2) \times (\alpha_0,\alpha_2)$.

\end{lemma}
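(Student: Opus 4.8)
The plan is to prove containment by showing that the whole topological boundary $\partial R$ of the open square misses the basin $\mathcal A(\alpha_1)$, and then to invoke connectedness of $\mathcal A^*(\alpha_1)$ to conclude that it cannot leave $R$. The arithmetic engine is the fact that $\alpha_0$ and $\alpha_2$ are roots, so $p(\alpha_0)=p(\alpha_2)=0$, which makes the secant map collapse the four sides of $R$ onto the external fixed points.

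First I would compute the action of $S$ on the four sides of $\partial R$ directly from \eqref{eq:secant_real}. For every $x$ with $p(x)\neq 0$ one gets
\[
S(x,\alpha_0)=(\alpha_0,\alpha_0), \qquad S(x,\alpha_2)=(\alpha_2,\alpha_2),
\]
so the horizontal sides $y=\alpha_0$ and $y=\alpha_2$ are mapped in a single step onto the fixed points $(\alpha_0,\alpha_0)$ and $(\alpha_2,\alpha_2)$; and for every $y$ with $p(y)\neq 0$,
\[
S(\alpha_0,y)=(y,\alpha_0), \qquad S(\alpha_2,y)=(y,\alpha_2),
\]
so the vertical sides $x=\alpha_0$ and $x=\alpha_2$ are carried onto the horizontal sides and hence land on the external fixed points after two iterations. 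Thus the generic points of $\partial R$ belong to $\mathcal A(\alpha_0)\cup\mathcal A(\alpha_2)$, which is disjoint from $\mathcal A(\alpha_1)$ because a convergent orbit has a unique limit.

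Next I would account for the finitely many exceptional points on $\partial R$ where the formulas above break down, namely those with $p(x)=0$ or $p(y)=0$. Since $\alpha_0<\alpha_1<\alpha_2$ are consecutive roots, inside $[\alpha_0,\alpha_2]$ the only roots of $p$ are these three, so the exceptional points are precisely the fixed points $(\alpha_0,\alpha_0),(\alpha_2,\alpha_2)$ and the focal points $Q_{i,j}=(\alpha_i,\alpha_j)$ lying on the sides and corners of $R$. The two fixed points lie in $\mathcal A(\alpha_0)$ and $\mathcal A(\alpha_2)$, while each $Q_{i,j}\in\delta_S$ has a $0/0$ second coordinate and therefore no well-defined forward orbit, so it lies in no basin at all. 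Combining with the previous paragraph, every point of $\partial R$ either belongs to $\mathcal A(\alpha_0)\cup\mathcal A(\alpha_2)$ or is a focal point, whence $\partial R\cap\mathcal A(\alpha_1)=\emptyset$.

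Finally I would close the argument topologically. Because $(\alpha_1,\alpha_1)$ is attracting and interior to $\mathcal A(\alpha_1)$, the basin is open and hence so is its connected component $\mathcal A^*(\alpha_1)$, which moreover is contained in $\mathcal A(\alpha_1)$ and so is disjoint from $\partial R$. Writing $\mathbb R^2 = R \,\sqcup\, \partial R \,\sqcup\, (\mathbb R^2\setminus\overline R)$, the set $\mathcal A^*(\alpha_1)$ lies in the disjoint union of the two open sets $R$ and $\mathbb R^2\setminus\overline R$; being connected it lies entirely in one of them, and since it contains $(\alpha_1,\alpha_1)\in R$ we conclude $\mathcal A^*(\alpha_1)\subset R$. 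I expect the only delicate point to be the bookkeeping of the exceptional boundary points (the roots and the focal points) and the verification that none of them can belong to $\mathcal A(\alpha_1)$; once that is settled, the connectedness step is routine.
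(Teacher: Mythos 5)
Your proof is correct and takes essentially the same route as the paper: both arguments use $S(x,\alpha)=(\alpha,\alpha)$ and $S(\alpha,y)=(y,\alpha)$ to show that $\partial R$ minus the focal points $Q_{i,j}$ is absorbed into the basins of the external roots, observe that the focal points lie in $\delta_S$ and hence in no basin, and then conclude by connectedness of $\mathcal A^*(\alpha_1)$. Your write-up simply makes explicit the two-step images of the vertical sides, the bookkeeping of exceptional points, and the final connectedness step, all of which the paper compresses into ``the lemma follows.''
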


\begin{proof}
From \eqref{eq:secant_real}  it is easy to see that given any root $\alpha\in \mathbb R$ of $p$ we have
$S(x,\alpha)=(\alpha,\alpha)$ and $S(\alpha,y)=(y,\alpha)$, as long as  $x$ and $y$ are not roots of $p$. This implies  that
$$
\left( \partial R \setminus
\bigcup_{i\ne j\in\{0,1,2\}} Q_{i,j} \right) \subset \left(A^{\star}\left(\alpha_0\right) \cup A^{\star}\left(\alpha_2\right)\right).
$$
Since the focal points $Q_{i,j}$ belong to $\delta_S$ where $S$ is not even defined the lemma follows. See Figure \ref{fig:period4}.
\end{proof}

We define the {\it external boundary} of $\mathcal  A^*(\alpha_1)$ as follows. Consider $U$ the open set $\mathbb C \setminus \overline{  \mathcal  A^*(\alpha_1)}$ and let $V$ be the unique unbounded connected component of $U$. Then  the external boundary of $\mathcal  A^*(\alpha_1)$ is $\partial V$. Notice that $V$ is unique since  $\mathcal  A^*(\alpha_1)$ is bounded (see Lemma \ref{lem:bounded}). We will assume that the external boundary of $\mathcal A^*(\alpha_1)$ is {\it piecewise smooth}; i.e.,  a union of smooth arcs (i.e., diffeomorphic to $(0,1)$) joining the focal points. 

\begin{prop}\label{prop:boundary_4cycle}
Let $p$ be a polynomial and let
$\alpha_0<\alpha_1<\alpha_2$ be three consecutive simple roots of $p$. Assume the external boundary of $\mathcal A^*(\alpha_1)$ is piecewise smooth. Then $\partial \mathcal A^*(\alpha_1)$ contains a smooth hexagon-like polygon with $C^1$-lobes where the vertices are the focal points $Q_{1,0},\,Q_{2,0},\,Q_{0,1},\,Q_{2,1},Q_{0,2}$ and $Q_{1,2}$, and lobes are issuing only from to $Q_{1,0},\,Q_{2,0},\,Q_{0,2}$ and $Q_{1,2}$.  %Moreover, there exists a 4-cycle lying in  $T\subset \partial \mathcal A^*(\alpha_1)$ attracting all points of $T$.
\end{prop}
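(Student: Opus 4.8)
The plan is to recover $\partial\mathcal{A}^*(\alpha_1)$ from the ``cross'' through the fixed point together with a local analysis at the focal points. Since $\alpha_1$ is a root, \eqref{eq:secant_real} gives $S(x,\alpha_1)=(\alpha_1,\alpha_1)$ and $S(\alpha_1,y)=(y,\alpha_1)$ wherever defined; hence the horizontal arm $H=(\alpha_0,\alpha_2)\times\{\alpha_1\}$ and the vertical arm $V=\{\alpha_1\}\times(\alpha_0,\alpha_2)$ are carried to the fixed point in one and two steps respectively, and, being connected and containing $(\alpha_1,\alpha_1)$, both lie in $\mathcal{A}^*(\alpha_1)$. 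Their four endpoints are exactly $Q_{0,1},Q_{2,1}$ (ends of $H$) and $Q_{1,0},Q_{1,2}$ (ends of $V$); each is in $\delta_S$, hence not in the basin, yet is a limit of $H\cup V$, so these four focal points already lie on $\partial\mathcal{A}^*(\alpha_1)$. By Lemma \ref{lem:bounded}, $\mathcal{A}^*(\alpha_1)\subset R$, and since $\alpha_0,\alpha_1,\alpha_2$ are consecutive roots the only focal points in $\overline R$ are the six $Q_{i,j}$ ($i\neq j\in\{0,1,2\}$).

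Next I would determine how the boundary arcs connect these vertices. The key observation is that the boundary meets each vertical prefocal line $L_j=\{x=\alpha_j\}$ only at the two focal points $Q_{j,k}$ ($k\neq j$) lying on it: indeed $\mathcal{A}^*(\alpha_1)\cap L_1=V$, so $\partial\mathcal{A}^*(\alpha_1)\cap L_1=\{Q_{1,0},Q_{1,2}\}$, while $\mathcal{A}^*(\alpha_1)$ does not meet the edges $L_0,L_2$ of $R$ at all, and $\overline{\mathcal{A}^*(\alpha_1)}$ is disjoint from the open external basins that fill $\partial R$ away from the focal points. Consequently, an external-boundary arc leaving a focal point $Q_{i,j}$ with tangent slope $m$ has, by \eqref{eq:y(m)_Sp}, its image leaving the point $(\alpha_j,y(m))\in L_j$; since the boundary meets $L_j$ only at focal points, $(\alpha_j,y(m))$ must be one of the two focal points on $L_j$, so the arc leaves $Q_{i,j}$ along exactly one of the two slopes sending it to those focal points. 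For instance at $Q_{0,1}$ the vertical slope gives $y(m)=\alpha_0$, hence $Q_{1,0}$, and a second (negative) slope gives $y(m)=\alpha_2$, hence $Q_{1,2}$. Tracing the boundary from the four focal points of the cross along these smooth arcs, they must close up into a single closed curve through all six focal points in the cyclic order $Q_{1,0},Q_{2,0},Q_{2,1},Q_{1,2},Q_{0,2},Q_{0,1}$ in which they occur along $\partial R$; this puts the corner focal points $Q_{0,2},Q_{2,0}$ on $\partial\mathcal{A}^*(\alpha_1)$ and produces the smooth hexagon with $C^1$ edges.

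For the lobes I would invoke the creation mechanism of Section \ref{sec:planerational}: if the two focal points on a prefocal line $L_j$ are joined by a single hexagon edge $\gamma$ (so $\gamma$ meets $L_j$ at its two endpoints), then the branch of $S^{-1}$ based at any focal point $Q_{i,j}$ with prefocal line $L_j$ carries $\gamma$ to a $C^1$-loop issuing from $Q_{i,j}$, that is, a lobe; iterating $S^{-1}$ then produces countably many. On $L_0$ the focal points $Q_{0,1},Q_{0,2}$ both lie on the left edge of $R$ and are adjacent in the hexagon, so their common edge pulls back to lobes at $Q_{1,0}$ and $Q_{2,0}$; likewise the adjacent pair $Q_{2,0},Q_{2,1}$ on the right edge $L_2$ gives lobes at $Q_{0,2}$ and $Q_{1,2}$. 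By contrast the two focal points $Q_{1,0},Q_{1,2}$ on the middle line $L_1$ lie on the bottom and top edges of $R$ and are \emph{not} adjacent in the hexagon, so no edge joins them, nothing on $L_1$ can be pulled back, and no lobe issues from $Q_{0,1}$ or $Q_{2,1}$. This gives lobes precisely at $Q_{1,0},Q_{2,0},Q_{0,2},Q_{1,2}$, as claimed.

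The step I expect to be the main obstacle is the global closing-up in the second paragraph: upgrading the local, slope-by-slope information at the focal points to the statement that the arcs genuinely join the six focal points into a single hexagon, that the corners $Q_{0,2},Q_{2,0}$ are actually attained, and that $\mathcal{A}^*(\alpha_1)$ fills this hexagon up to its vertices. This is exactly where the piecewise-smoothness hypothesis does the essential work, confining the vertices of $\partial\mathcal{A}^*(\alpha_1)$ to the six focal points and hence reducing the combinatorics of the boundary to a finite problem.
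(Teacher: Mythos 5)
Your first paragraph and the slope analysis at the focal points match the paper's starting point (the cross $S_h\cup S_v$ inside the basin, Lemma \ref{lem:bounded}, and the correspondence \eqref{eq:y(m)_Sp}), and your construction of lobes at $Q_{1,0},Q_{2,0},Q_{0,2},Q_{1,2}$ by pulling back the two hexagon edges lying on $L_0$ and $L_2$ is exactly the paper's. But there are two genuine gaps. First, the step you yourself flag as the ``main obstacle'' --- the global closing-up, and in particular the fact that the corner focal points $Q_{0,2},Q_{2,0}$ are actually attained by $\partial\mathcal A^*(\alpha_1)$ --- is not an incidental technicality: it is the core of the proof, and the paper closes it \emph{dynamically}, not by tracing. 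From piecewise smoothness one first obtains only the two arcs $I$ (joining $Q_{1,0}$ to $Q_{0,1}$) and $K$ (joining $Q_{1,2}$ to $Q_{2,1}$); the corners enter the boundary because $J:=S(I)$ is shown, by the same prefocal/slope argument you use, to be a boundary arc joining $Q_{0,2}$ to $Q_{1,2}$, and $\Lambda:=S(K)$ one joining $Q_{1,0}$ to $Q_{2,0}$. The last two edges $M,N$ then come from smoothness, and the combinatorics is pinned down by checking $S(N)=J$ and $S(M)=\Lambda$ (using that $N$ ends at two focal points of $L_2$, whose images must issue from $L_0$ and $L_1$). Your purely local analysis constrains the admissible slopes at the four vertices you already have, but by itself it cannot produce the two missing vertices nor decide which focal point each arc connects to at its far end; forward iteration is what does that work, and your proposal never performs it.

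Second, your argument that no lobe issues from $Q_{0,1}$ or $Q_{2,1}$ (``nothing on $L_1$ can be pulled back'') is incorrect as stated. The lobes you have just constructed at $Q_{1,0}$ and $Q_{1,2}$ are based at points of $L_1$, and by the very mechanism of Figure \ref{fig:focals_simple1}(b) they \emph{do} pull back to lobes issuing from $Q_{0,1}$ and $Q_{2,1}$ --- the paper acknowledges this explicitly. What must be proved is that these pulled-back lobes are not part of $\partial\mathcal A^*(\alpha_1)$. The paper does this with a sector argument: every curve approaching $Q_{0,1}$ inside the internal sector bounded by $I$ and $M$ (for instance $S_h$) is mapped to a curve through a point of $S_v\subset\mathcal A^*(\alpha_1)$, hence lies in the basin near the focal point; so no boundary curve, in particular no lobe, can enter that sector, and the preimage lobes at $Q_{0,1},Q_{2,1}$ must lie outside the external boundary. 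Without this argument (or a substitute for it), your claim about which vertices carry lobes is unsupported, since adjacency of focal points in the hexagon is not the only source of lobes.
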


\begin{proof}
We will assume, without lost of generality, that $p'\left(\alpha_0\right)>0$ (and so $p'\left(\alpha_1\right)<0$  and $p'\left(\alpha_2\right)>0$). 

Focal points do not belong to  $\mathcal A^*(\alpha_1)$, while  from Lemma \ref{lem:bounded} it follows  that the segments $S_v:=\{(\alpha_1,y) \, ; \, \alpha_0 < y < \alpha_2\}$ and $S_h:=\{(x,\alpha_1) \, ; \, \alpha_0 < x < \alpha_2\}$ do. In particular, we have that $\{Q_{0,1},Q_{2,1},Q_{1,0},Q_{1,2}\} \in \partial \mathcal A^*(\alpha_1)$. Since $\mathcal A^*(\alpha_1) \subset R$ (see Lemma \ref{lem:bounded}) and the external boundary of $\mathcal A^*(\alpha_1)$ is piecewise smooth 
there should be an arc $I\subset \partial \mathcal A^*(\alpha_1)$ joining $Q_{1,0}$ and $Q_{0,1}$ and an arc $K\subset \partial \mathcal A^*(\alpha_1)$  joining $Q_{1,2}$ and $Q_{2,1}$ belonging to the external  boundary.

\begin{figure}[ht]
    \centering
     \includegraphics[width=0.75\textwidth]{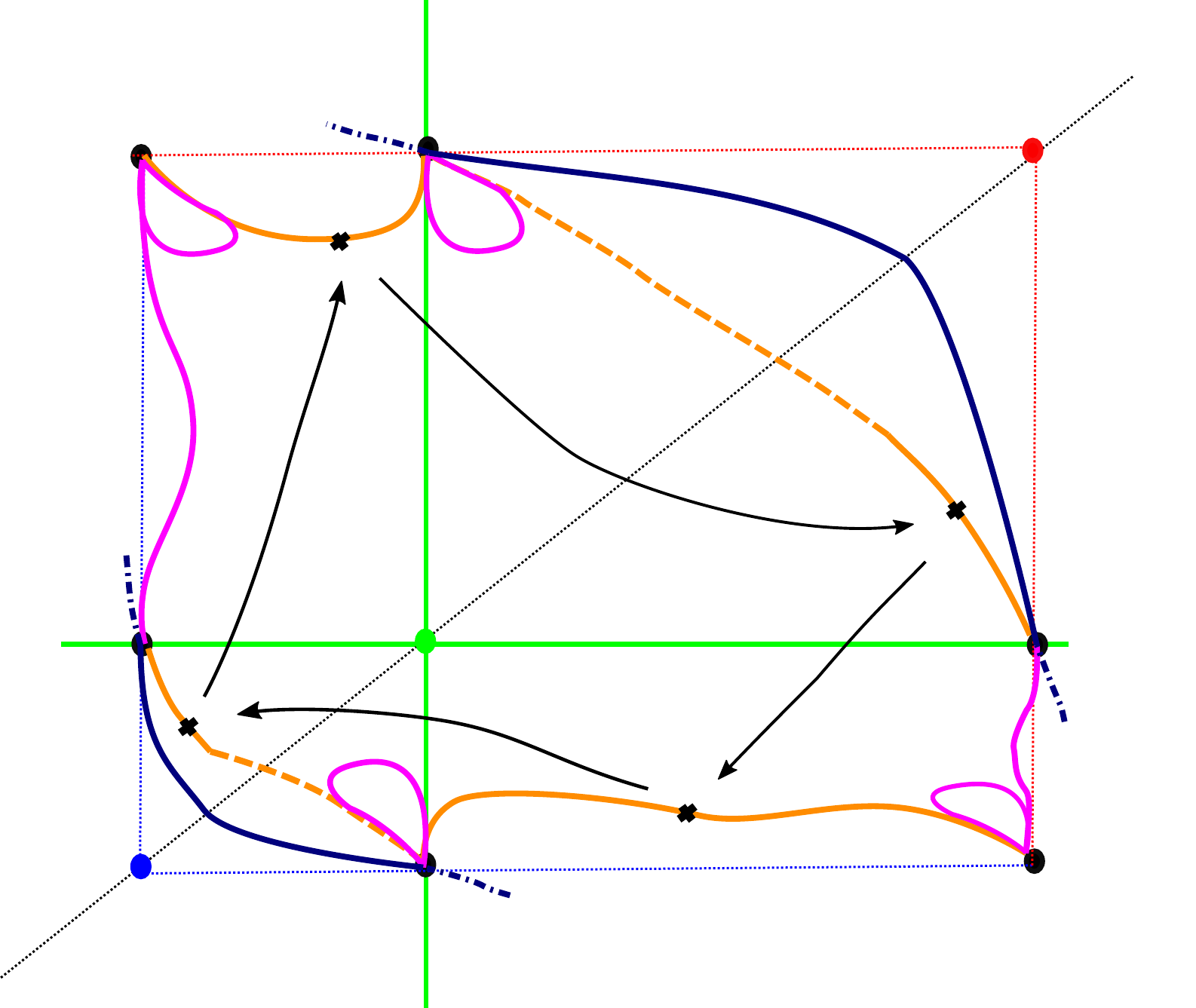}
         \put(-300,25) {\small $(\alpha_0,\alpha_0)$ } 
    \put(-279,51) {\small $+ (c_1,c_1)$ } 
    \put(-320,107) {\small $Q_{0,1}$ } 
    \put(-277,147) {\small $M$} 
    \put(-286,212) {\small $\ell_{0,2}$ } 
    \put(-204,218) {\small $\ell_{1,2}$ } 
     \put(-230,57) {\small $\ell_{1,0}$ } 
    \put(-64,53) {\small $\ell_{2,0}$ } 
    \put(-315,244) {\small $Q_{0,2}$ }
     \put(-210,244) {\small $Q_{1,2}$ }
     \put(-210,25) {\small $Q_{1,0}$ }
     \put(-40,35) {\small $Q_{2,0}$ }
          \put(-65,75) {\small $N$ }
      \put(-40,107) {\small $Q_{2,1}$ }  
      \put(-40,235) {\small $(\alpha_2,\alpha_2)$ }
       \put(-85,205) {\small $+(c_2,c_2)$ }
       \put(-105,220) {\small $\delta_S$ }
      \put(-239,107) {\small $(\alpha_1,\alpha_1)$ }
      \put(-280,76) {\small \bf{ $\zeta$} }
      \put(-295,56) {\small \bf{ $\delta_S$} }
      \put(-267,70) {\small \bf{ $I$} }
       \put(-250,220) {\small \bf{ $S(\zeta)$} }
              \put(-260,203) {\small \bf{ $J$} }
        \put(-82,155) {\small \bf{ $K$} }
                \put(-97,153) {\small \bf{ $r$} }
                \put(-65,140) {\small \bf{ $S^2(\zeta)$} }
         \put(-162,40) {\small \bf{ $S^3(\zeta)$} }
                  \put(-112,58) {\small \bf{ $\Lambda$} }
         \put(-182,180) {\small \bf{ $\mathcal A^*(\alpha_1)$} }
        \put(-162,105) {\small \bf{ $\mathcal S_h$} }
                \put(-215,155) {\small \bf{ $\mathcal S_v$} }
                \put(-213,265) {\small $L_1$ }                       
                \put(-305,165) {\small $L_0$ }
                \put(-43,175) {\small $L_2$ }
           \caption{\small{Sketch of external boundary of the immediate basin of attraction of an internal root  $\alpha_1$. In the picture we can see the six focal points $Q_{i,j}$, where $i,j$ are two different numbers in $\{ 0, 1, 2\}$ and the 4-cycle $\zeta \to S(\zeta) \to S^2(\zeta) \to S^3(\zeta) \to \zeta$.  }}
    \label{fig:period4}
    \end{figure}

We claim that  $S\left(I\right)$ is an arc $J \subset \partial \mathcal A^*(\alpha_1)$ connecting the focal points $Q_{0,2}$ and $Q_{1,2}$. To see the claim we notice that when $I$ approaches $Q_{0,1}$ (with negative slope by construction; see Figure \ref{fig:period4}) its image should be an arc landing at $L_1 \cap \partial \mathcal A^*(\alpha_1)$. Since $ \mathcal A^*(\alpha_1) \subset R$ and $L_1 \cap R \subset \mathcal A^*(\alpha_1)$ we conclude that the landing point should be either $Q_{1,2}$ or $Q_{1,0}$. Using the one-to-one correspondence defined in 
\eqref{eq:y(m)_Sp} it is clear that the landing point cannot be $Q_{1,0}$ because this corresponds to $m=\infty$. Similarly we can show that  when $I$ approaches $Q_{1,0}$ (again with negative slope by construction) its image should be an arc landing at $Q_{0,2}$. Moreover $J \subset R$ since, by Lemma \ref{lem:bounded}, we have that $\partial R \cap \mathcal A^*(\alpha_1) =\emptyset$. 

Arguing similarly on $K$ instead of $I$ we see that $\Lambda:=S\left(K\right)$ is a smooth arc joining $Q_{1,0}$ and $Q_{2,0}$ entirely contained in $R$  as it is illustrated in Figure \ref{fig:period4}.

Finally since $\mathcal A^*(\alpha_1) \subset R$ and the assumption on the smoothness of  the external boundary of $\mathcal A^*(\alpha_1)$ there should be two arcs, one denoted by $N$ joining $Q_{2,0}$ and $Q_{2,1}$ and another denoted by $M$ joining $Q_{0,1}$ and $Q_{0,2}$, with 
$\{N,M\} \subset \partial \mathcal A^*(\alpha_1) $ belonging to $R$. 

Since $N$ is an arc issuing from two focal points $Q_{2,0}
$ and $Q_{2,1}$, its image $S(N)$ must be an arc
issuing from the prefocal line of the two focal points, which are $L_{0}%
$ and $L_{1}$. Moreover, since the two focal points 
$Q_{2,0}$ and $Q_{2,1}$ belong to the line $L_{2}$
which is mapped into $y=\alpha_{2}$ we have that necessarily the arc
$S(N)$ connects the focal points $Q_{0,2}$ and $Q_{1,2}
$ so that it must be $ J=S(N)=S(I).$ Reasoning in a similar
way we can state that the image of the arc $M$, must be 
$\Lambda=S(M)=S(K)$.
%Clearly when $N$ approaches $Q_{2,1}$ with a non negative slope (since, by definition of $N$, it should be contained in the third quadrant relative to $Q_{2,1}$). The prefocal line of the focal point  $Q_{2,1}$ is $L_1$, so $S\left(N\right)$ near $Q_{2,1}$ is either $Q_{1,2}$ or $Q_{1,0}$.  Again using the one-to-one correspondence  \eqref{eq:y(m)_Sp} one can check it is $Q_{1,2}$. Moreover the correspondence \eqref{eq:y(m)_Sp} also implies that $N$ approaches $Q_{2,1}$ with (negative) infinite slope. In particular since $N \subset \partial \mathcal A^*(\alpha_1)$ and $\partial \mathcal A^*(\alpha_1)$ is invariant, we ensure that $N$ is a  preimage of $J$. Arguing similarly we define $M$ as a preimage of $\Lambda$, as illustrated in Figure  \ref{fig:period4}.

Up to this point we have constructed an hexagon-like polygon without lobes formed by six smooth arcs $I, J, K,\Lambda,M$ and $N$ with vertices at the focal points $Q_{1,0},Q_{0,1},Q_{0,2},Q_{1,2},Q_{2,1}$ and $Q_{2,0}$ contained in  $\partial \mathcal A^{\ast}(\alpha_{1})$. Of course the hexagon (without the vertices) is forward invariant and $I\to J$, $N\to J$, $K\to \Lambda$ and $M\to \Lambda$. Moreover, observe that each curve approaching $Q_{0,1}$ inside the internal sector defined by the arcs $I$ and $M$ (for instance $S_h$) will be sent to a curve through a point in $S_v$ so contained in $A^{\star}\left(\alpha_1\right)$. Hence no curve in this sector might be in $\partial A^{\star}\left(\alpha_1\right)$. Similarly for the focal point $Q_{2,1}$ in the internal sector defined by $K$ and $N$.

%Now noticing that the boundary also has an arc, say $N$, connecting the focal
%points $Q_{20}$ and $Q_{21}$ and noticing that the prefocal $x=\alpha_{2}$
%is mapped into the line $y=\alpha_{2}$ we have that necessarily the arc $N$
%must also be mapped into the arc $J$ (stated differently, the arc $J$ has two
%distinct preimages, one is $I$\ and one is $N$).
%
%This starting arc is on the left side of $x=\alpha_{1}$ but with similar
%arguments on its right side we can say that an arc $K$ must exist, connecting
%$Q_{12}$ and $Q_{21}$ issuing from both focal points with negative slope and
%it is mapped into an arc $L$ connecting $Q_{10}$ and $Q_{20}$ and since
%$x=\alpha_{0}$ is mapped into the line $y=\alpha_{0}$ we have that necessarily
%the arc $M$ must also be mapped into the arc $L$ (stated differently, the arc
%$L$ has two distinct preimages, one is $K$\ and one is $M$).
%
%Now the image of $J$\ must be also on $\partial A^{\star}\left(\alpha_1\right)$.
%The focal points $Q_{0,2}$ and $Q_{1,2}$ share the same prefocal line $L_2$. So the two ends 
%of $S(J)$ should approach either $Q_{2,0}$ or $Q_{2,1}$.  
The arc $J$\ is issuing from $Q_{0,2}$ and $Q_{1,2}$ and its image
must be also on the boundary, issuing from points of the prefocal $L_{2}$.
However, its image cannot be the arc $N$ since this would lead a two-cyclic
set implying the existence of a 2-cycle which is impossible. Thus, the arc
issuing from $Q_{0,2}$ and the arc issuing from $Q_{1,2}$ are both mapped into
an arc issuing from $Q_{2,1},$ which means that the image of $J$ is folded on
a portion of the arc $K$, and a folding point $r_{K}$ must exist on $K.$ Similarly for the other arc $\Lambda$, its image is folded on an arc of $I$ issuing from $Q_{0,1}$.

Finally, taking preimages of the arcs $N$ and $M$ we obtain countable many  lobes attached at the four focal points $Q_{1,0}, Q_{2,0},Q_{0,2}$ and $Q_{1,2}$. See Figure  \ref{fig:period4}. We  briefly show the inductive construction of this sequence of  lobes. The arc $M$ connects two points in the prefocal
line $L_{0}$, hence the preimage of $M$ should be given by a
lobe issuing from a related focal point (see the qualitative picture in
Fig.\ref{fig:focals_simple1}(a)). In our case we have two focal points both having the prefocal line
$L_{0},$ which are $Q_{1,0}$ and $Q_{2,0},$ thus we
have two preimages of $M$ giving two lobes issuing from these two
focal points. We denote by $\ell_{1,0}$ and $\ell_{2,0}$ the lobes attached to the focal points $Q_{1,0}$ and $Q_{2,0}$, respectively.  Similarly we can construct the other two lobes (as preimages of $N$) $\ell_{0,2}$ and $\ell_{1,2}$ attached to $Q_{0,2}$ and $Q_{1,2}$.

%The curve $M$ connects two focal points $Q_{0,1}$ and $Q_{0,2}$ in the prefocal line $L_0$. Hence the preimage $M$ should be given by two lobes one attached to the focal point 
%$Q_{2,0}$ and the other attached to the focal point $Q_{1,0}$ since both focal points share the same prefocal line $L_0$. 

Now we can take the preimages of the lobe $\ell_{0,2}
,$ since it is issuing from the prefocal line $L_{0}$ its
preimage should be given by a lobe issuing from a related focal point (see the
qualitative picture in Fig.\ref{fig:focals_simple1}(b)). In our case we have two focal points both
having the prefocal line $L_{0},$ which are $Q_{1,0}$ and
$Q_{2,0},$ thus we have two preimages of the lobe $\ell_{0,2}$
giving two lobes issuing from these two focal points, say $\ell_{1,0}^2$ and  $\ell_{2,0}^2$. 

In the same way we can prove the existence of two lobes $\ell_{0,2}^{2}$ and  $\ell_{1,2}^{2}%
$ issuing from the focal points $Q_{0,2}$ and  $Q_{1,2}%
$ as preimages of the lobe $\ell_{2,0}.$ Inductively, each lobe
$\ell_{2,0}^{n}$ issuing from the focal point  $Q_{2,0}$ has
preimages in two lobes $\ell_{0,2}^{n+1}$ and  $\ell_{1,2}^{n+1}$
issuing from the focal points $Q_{0,2}$ and  $Q_{1,2},$ and
$\ell_{0,2}^{n}$ issuing from the focal point $Q_{0,2}$ has
preimages in two lobes $\ell_{1,0}^{n+1}$ and  $\ell_{2,0}^{n+1}$
issuing from the focal points $Q_{1,0}$ and $Q_{2,0}.$
Notice that the lobes issuing from $Q_{1,0}$ and $Q_{1,2}$
have not preimages internal to the immediate basin, because such preimages are
issuing from the focal points $Q_{0,1}$ and $Q_{2,1}$ and we
have shown that lobes cannot exist inside the external boundary detected
above, so that the related preimages must be outside the external boundary. 
\end{proof}

%Now we can  take the preimages of the lobe $\ell_{0,2}$ one attached to the focal point $Q_{2,0}$, denoted by $\ell^2_{2,0}$, and the other attached to the focal point $Q_{1,0}$, denoted by $\ell^2_{10}$. The lobe $\ell^2_{20}$ is attached to $Q_{2,0}$ with slope equal to $\infty$, since this slope corresponds to the focal point $Q_{0,2}$ located at the prefocal line $L_0$. In contrast, the lobe $\ell^2_{10}$ is attached to the focal point $Q_{1,0}$ with the slope defined by the curve $I$. In the same way we can prove the existence of two lobes $\ell^2_{02}$ and $\ell^2_{12}$ attached to the corresponding focal points. Inductively, we can show the existence of a sequence of lobes $\ell^n_{2,0}$ attached to the focal point $Q_{2,0}$ where each one  is the preimage of the lobe $\ell^{n-1}_{0,2}$  attached to the focal point $Q_{0,2}$ for all $n \geq 2$; and similarly with the other three focal points.

 In Figure \ref{fig:lobes} we show the phase plane of the secant map applied to the Chebychev polynomial $T_3$ near the focal point $Q_{2,0}$. In this picture we can see the lobe $\ell_{20}$ which is a preimage of $M$ (Figure \ref{fig:lobes} left) and the lobe $\ell^2_{2,0}$ attached to the focal point $Q_{2,0}$ with slope equal to $\infty$ (Figure \ref{fig:lobes} right).

%\begin{figure}[ht]
%    \centering
%    \subfigure[\scriptsize{ The lobe $\ell__{02}$ attached to the focal point $Q_{02}$.]}{\includegraphics[width=0.4\textwidth]{./figures/lobe1.pdf}}
%    \subfigure[\scriptsize{The lobe $\ell^2__{02}$ attached to the focal point $Q_{02}$.}]{\includegraphics[width=0.445\textwidth]{./figures/lobe2.pdf}}
% \caption{\small{  Dynamical plane of the secant map applied to  $T_3(x)= 4x^3-3x$ near the focal point $Q_{02}$.}}
%    \label{fig:lobes}
%    \end{figure}

\begin{figure}[ht]
    \centering
    \subfigure[\scriptsize{ The lobe $\ell_{02}$ attached to the focal point $Q_{2,0}$.}]{
     \includegraphics[width=0.4\textwidth]{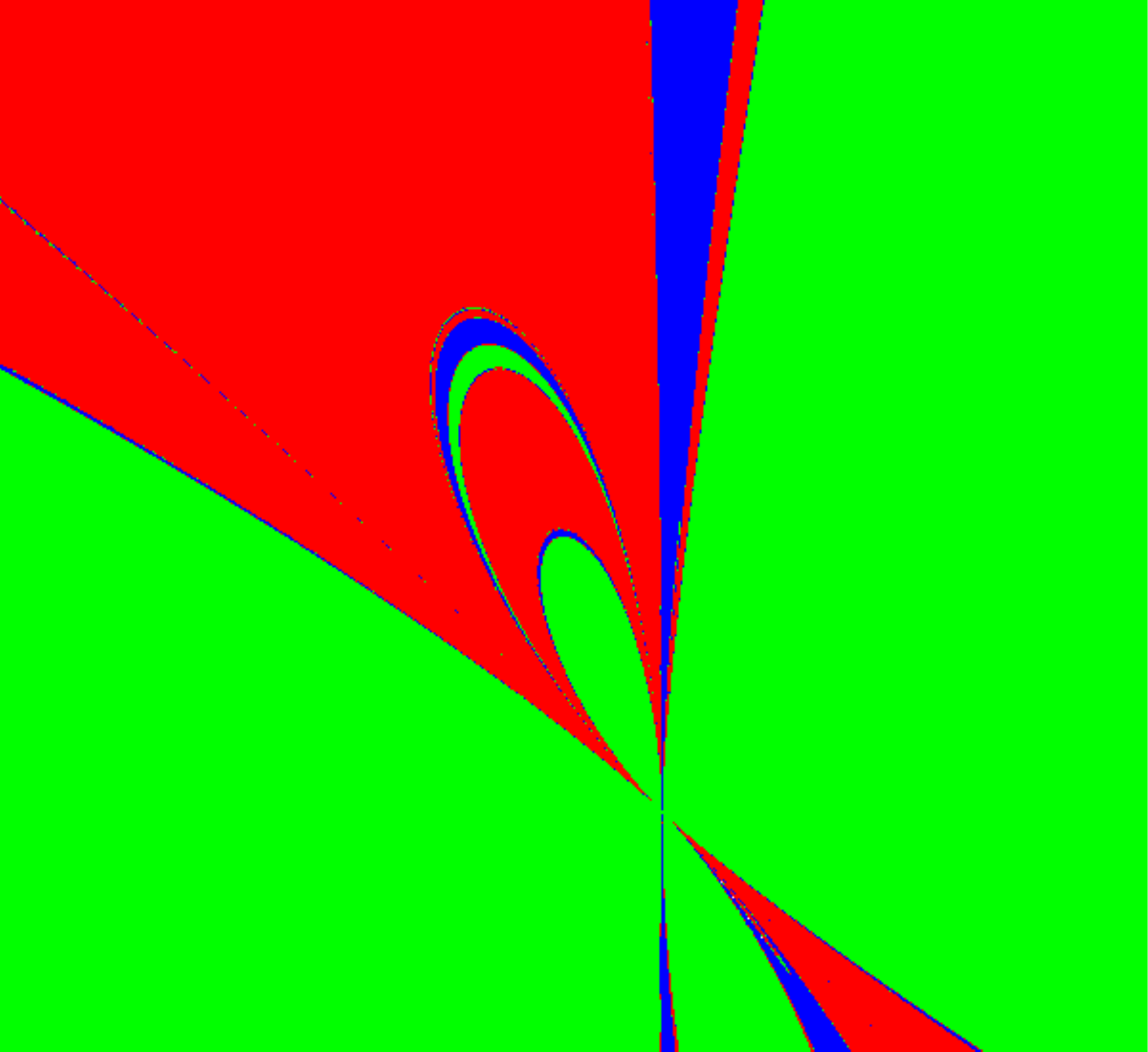}}
      \put(-70,33) {\small $Q_{2,0}$ } 
      \put(-112,120) {\small $\ell_{2,0} $ } 
            \put(-95,112) {\LARGE $\downarrow$ } 
    \subfigure[\scriptsize{The lobe $\ell^2_{2,0}$ attached to the focal point $Q_{2,0}$.}]{
     \includegraphics[width=0.446\textwidth]{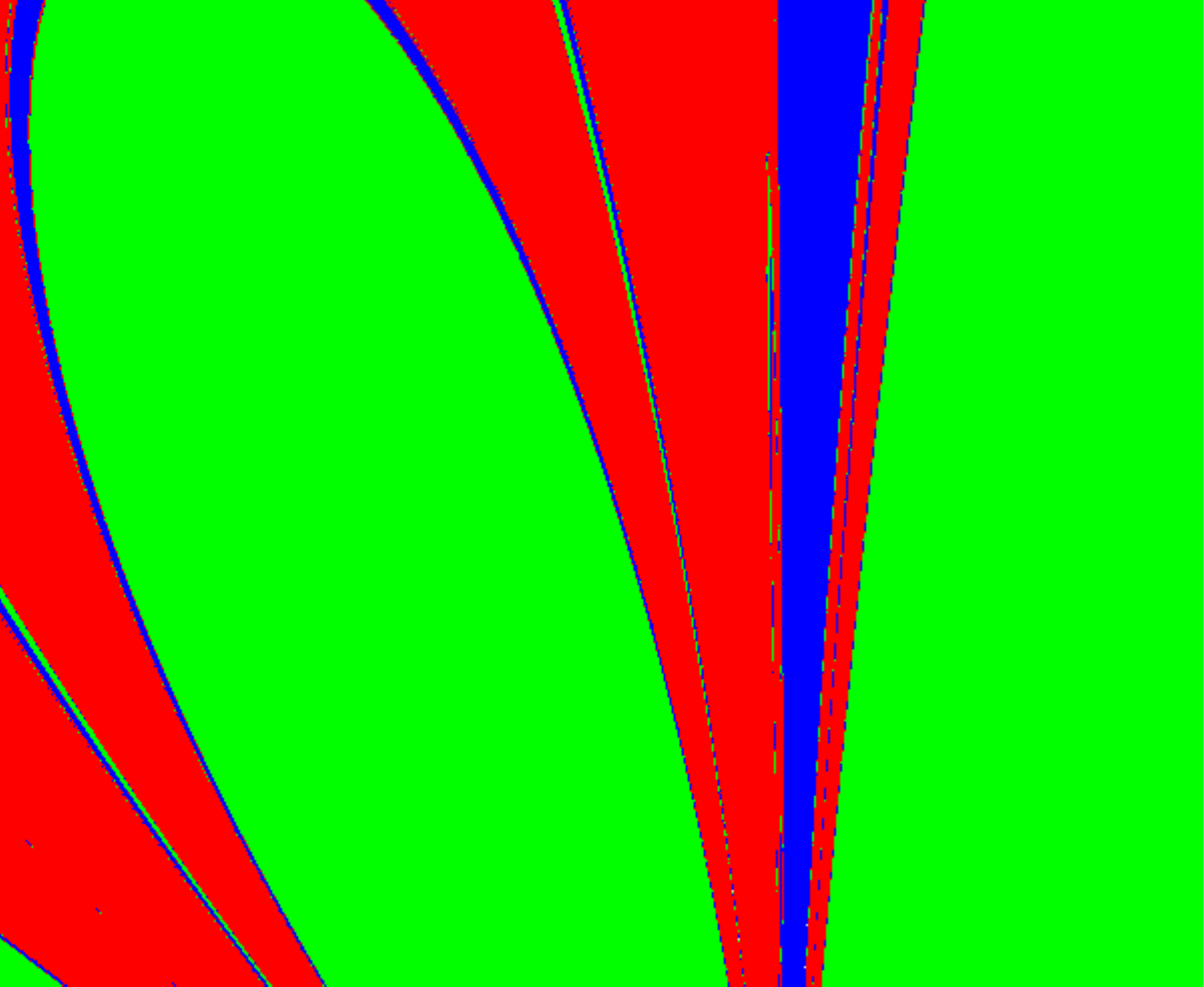}} 
           \put(-185,20) {\small $\ell_{2,0}$ } 
           \put(-187,10) {\Large $\longrightarrow$ }
                      \put(-89,135) {\small $\ell^2_{2,0}$ }
                        \put(-93,125) {\Large $\longrightarrow$} 
 \caption{\small{Enlargement of a portion of the phase plane of the secant map applied to  $T_3(x)= 4x^3-3x$ near the focal point $Q_{2,0}$. }}
    \label{fig:lobes}
    \end{figure}

%\begin{remark}[{\bf Preimages in $R$ of the hexagon-like poliygon}]\label{rem:2preimages}
%It follows from the proof of the previous proposition that every point in the hexagon-like polygon has two, one or none preimages belonging to the hexagon-like polygon. Moreover the points having one or none preimages in the hexagon-like polygon has no further pre-images outside it.
%\label{rem:2preimages}
%\end{remark}

%\begin{remark}[{\bf Adorned hexagon-like polygons and preimages}]\label{rem:adorned_hex}
%The assumption that the external boundary of $\mathcal A^*(\alpha_1)$ is piecewise smooth might be unnecessary. All draws seems to imply this is always the case. Even in the case that the smooth arcs between the focal points (that is, edges of the hexagon-like polygon) have some decorations  the proof of the above proposition could be adapted (using the notion of  {\it adorned  hexagon-like polygon with lobes}). 
%\end{remark}

\begin{corollary}\label{coro:4cycle}
Let $p$ be a polynomial and let
$\alpha_0<\alpha_1<\alpha_2$ be three consecutive real simple roots of $p$. Then there exists a 4-cycle $\mathcal C\in \partial \mathcal A^*(\alpha_1)$ of type I.
\end{corollary}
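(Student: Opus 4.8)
The plan is to extract the $4$-cycle directly from the boundary dynamics established in Proposition \ref{prop:boundary_4cycle} and then to read off its type from Table \ref{table:types}. Recall from that proposition the boundary arcs $I,J,K,\Lambda$ of the hexagon together with the relations $S(I)=J$, $S(K)=\Lambda$, while $S(J)$ is folded onto a sub-arc of $K$ issuing from $Q_{2,1}$ and $S(\Lambda)$ is folded onto a sub-arc of $I$ issuing from $Q_{0,1}$. Consequently $S^{4}$ maps the arc $I$ into itself, since $S^{4}(I)=S^{3}(J)\subset S^{2}(K)=S(\Lambda)\subset I$, the first inclusion coming from $S(J)\subset K$ and the last from the fold of $S(\Lambda)$ onto $I$.

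First I would produce the periodic point. Parametrising $\overline I$ by a homeomorphism onto $[0,1]$ with endpoints $Q_{1,0}$ and $Q_{0,1}$, the composition $S^{4}$ restricts to a continuous self-map of $\overline I$: the images of the two focal endpoints are determined, through the slope correspondence \eqref{eq:y(m)_Sp}, by the tangent directions of $I$ at $Q_{1,0}$ and $Q_{0,1}$, and the two folds are continuous. By the intermediate value theorem this self-map has a fixed point $\zeta\in I$; since the focal endpoints are not fixed by $S^4$ (their $S^4$-images, computed through \eqref{eq:y(m)_Sp}, are interior points of $I$), the point $\zeta$ lies in the interior of $I$. Its orbit $\zeta\to S\zeta\to S^{2}\zeta\to S^{3}\zeta$ lies respectively on $I,J,K,\Lambda$; as these arcs meet only at focal points, the four points lie in disjoint arc-interiors, so they are pairwise distinct and none is a focal point. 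Hence $\zeta$ has minimal period exactly $4$, and $\mathcal C=\{\zeta,S\zeta,S^2\zeta,S^3\zeta\}\subset\partial\mathcal A^{*}(\alpha_1)$ is a genuine $4$-cycle.

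It remains to identify the type. Writing $\zeta=(a,b)$, so that $S\zeta=(b,c)$, $S^{2}\zeta=(c,d)$ and $S^{3}\zeta=(d,a)$, the memberships $\zeta\in I$ (endpoints $Q_{1,0},Q_{0,1}$) and $S^{2}\zeta\in K$ (endpoints $Q_{1,2},Q_{2,1}$) give immediately $a,b\in(\alpha_0,\alpha_1)$ and $c,d\in(\alpha_1,\alpha_2)$. Thus two consecutive base points of the cycle lie below $\alpha_1$ and the other two above; inspecting Table \ref{table:types} this consecutive pattern occurs only for Types I and III, whereas in Types II and IV the base points below and above an intermediate value alternate around the cycle. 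To discard Type III I would use that the four points $\zeta,S\zeta,S^{2}\zeta,S^{3}\zeta$ sit on $I,J,K,\Lambda$, which occur in this cyclic order along $\partial\mathcal A^{*}(\alpha_1)$ (the full boundary order being $I,M,J,K,N,\Lambda$); hence the cycle runs monotonically around the bounded set $\mathcal A^{*}(\alpha_1)$ and encircles the interior fixed point $(\alpha_1,\alpha_1)$. By the discussion following Proposition \ref{prop:configurations} (see Figure \ref{fig:4cycles_types1}) this ``clockwise turn'' characterises Types I and II and is incompatible with the fourfold flip across the diagonal of Types III and IV. Combining the two restrictions leaves exactly Type I. Equivalently, the same conclusion follows from the fold structure: $S^{2}\zeta\in S(J)$ lies on the sub-arc of $K$ issuing from $Q_{2,1}=(\alpha_2,\alpha_1)$, which sits below the diagonal, forcing $c>d$, and symmetrically $\zeta\in S(\Lambda)$ lies above the diagonal, forcing $a<b$; together with $a,b<\alpha_1<c,d$ this is precisely the ordering $a<b<d<c$ of Type I.

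I expect the main obstacle to be the rigorous extraction of the periodic point: one must justify that $S^{4}$ genuinely extends to a continuous self-map of the closed arc $\overline I$ despite $S$ being undefined at the focal endpoints, controlling the two folds and the endpoint images through the slope correspondence \eqref{eq:y(m)_Sp}, and then guarantee that the fixed point obtained is interior to $I$ (hence a true period-$4$ point) rather than an artefact sitting at a focal vertex. The type determination itself is comparatively soft once the interval locations and the cyclic ordering on $\partial\mathcal A^{*}(\alpha_1)$ are in hand.
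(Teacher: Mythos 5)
Your proposal follows the paper's proof of this corollary step for step: the paper likewise composes the boundary dynamics of Proposition \ref{prop:boundary_4cycle} to get $S^4\colon I\to I_1\subset I$ with $I_1$ issuing from $Q_{0,1}$, invokes a fixed point $\zeta$ on the arc, obtains minimal period $4$ because the iterates lie on the four distinct edges $I,J,K,\Lambda$, and cites Proposition \ref{prop:configurations} for Type I. The differences are precisely the two places where you try to make the paper's terse claims rigorous, and both of those added steps have genuine problems.

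First, interiority of the fixed point. You assert that the $S^4$-images of the focal endpoints of $I$, computed through \eqref{eq:y(m)_Sp}, are interior points of $I$. They are not. Tracing the endpoint dynamics that Proposition \ref{prop:boundary_4cycle} itself establishes, the continuous extension of $S$ along the boundary sends $Q_{0,1}\mapsto Q_{1,2}\mapsto Q_{2,1}\mapsto Q_{1,0}\mapsto Q_{0,1}$ (both endpoints of $J$ are folded to $Q_{2,1}$, and both endpoints of $\Lambda$ are folded to $Q_{0,1}$), so the extension of $S^4$ to $\overline{I}$ \emph{fixes} the endpoint $Q_{0,1}$ and maps the other endpoint $Q_{1,0}$ to $Q_{0,1}$ as well. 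Your intermediate value theorem argument therefore only produces a fixed point that may be $Q_{0,1}$ itself, which is a focal point, not a periodic point of $S$. To extract an interior fixed point one needs something more, e.g.\ that the extension of $S^4$ pushes points of $I$ near $Q_{0,1}$ away from $Q_{0,1}$ along the arc. The paper's ``hence there should be a fixed point'' has the same gap, but your stated justification for interiority is false as written.

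Second, the exclusion of Type III. Your membership argument correctly narrows the possibilities to $a<b<d<c$ (Type I) or, when $b<a$, to the relabelled ordering $a<d<b<c$ (Type III); this is already more than the paper records. But the clockwise/flip dichotomy cannot finish the job: with the labels of Table \ref{table:types}, Type III produces the diagonal sign pattern $(+,+,-,-)$ around the cycle exactly as Type I does, and only Types II and IV flip four times. The paper's sentence attributing the flip to Types III and IV follows the mislabelled cases of its own proof of Proposition \ref{prop:configurations} (Cases 3, 5, 6 contradict Table \ref{table:types}); under any consistent labelling your ``two consecutive points on each side of $\alpha_1$'' criterion and your ``clockwise turn'' criterion induce the \emph{same} partition $\{\mathrm{I},\mathrm{III}\}$ versus $\{\mathrm{II},\mathrm{IV}\}$, so intersecting them excludes nothing. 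What truly separates Type I from Type III is your fall-back claim that $\zeta$ lies above the diagonal $y=x$ (equivalently $a<b$) and $S^2\zeta$ below it; that is the right idea, and either half would suffice, but it is asserted rather than proved: one must show that the subarc of $I$ from $Q_{0,1}$ to the fold point stays above the diagonal, or that the subarc of $K$ issuing from $Q_{2,1}$ stays below it. The paper gives no argument at all for the type, so your attempt is not weaker than the published proof, but as written neither of your two exclusion arguments closes the identification.
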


\begin{proof}
According to the arguments used in the proof of Proposition \ref{prop:boundary_4cycle} we know that for the {\it arc-edge} $I$ of the hexagon-like polygon we have $S^4:I \to I_1\subset I$, where $I_1$ is an arc issuing from the focal point $Q_{0,1}$. Hence there should be a fixed point $\zeta \in I_1$. Of course 
$\mathcal C=\{\zeta,S\left(\zeta\right),S^2\left(\zeta\right),S^3\left(\zeta\right)\}$ is a four cycle of $S$ since each point belongs to a different edge of the hexagon-like border and, from Proposition \ref{prop:configurations}, it is of type $I$. Moreover, we know that on the transverse direction to $I$ the point $\zeta$ should be a repeller (for $S^4$) since the points near $\zeta$ outside $I$ move away from $I$, in particular the ones converging to $(\alpha_1, \alpha_1)$. Hence $\zeta$ is a transversely  repelling point for $S^4$.   
\end{proof}

\begin{remark}
We conjecture that the hypothesis on the smoothness of the external boundary of  $\partial \mathcal A^*(\alpha_1)$ is not needed.
\end{remark}

\begin{remark}
Corollary \ref{coro:4cycle} does not claim that the period 4-cycle is a saddle point of $S^4$. However, we conjecture it is so with one side of its unstable 1-dimensional manifold entering on $\mathcal A^*(\alpha_1)$ and the stable manifold lying on $\partial \mathcal A^*(\alpha_1)$. As an example for this we consider the polynomial $p^{I}$ given in \eqref{eq:pol} and its 4-cycle $\zeta=(1,2) \mapsto (2,3) \mapsto (3,2.44) \mapsto (2.44,1)$. Some computations show that
$$
DS^4(\zeta)\approx\left(\begin{array}{cc}
207.26 & 236.15 \\
242.42 & 276.37
\end{array}\right) 
$$
with eigenvalues $\lambda_1\approx  483.55$ and $\lambda_2\approx 0.05$. So clearly $\zeta$ is a saddle point. Moreover, the corresponding eigenvectors $v_1\approx (-0.65,-0.76)$ and $v_2\approx (-0.75,0.66)$ 
show that the unstable and  stable manifolds (locally) coincide with the mentioned directions. See Figure \ref{fig:type_I}. 
\end{remark}

\begin{figure}[ht]
    \centering
    \subfigure[\scriptsize{ $T_3(x)=4x^3-3x$.}]{
     \includegraphics[width=0.4\textwidth]{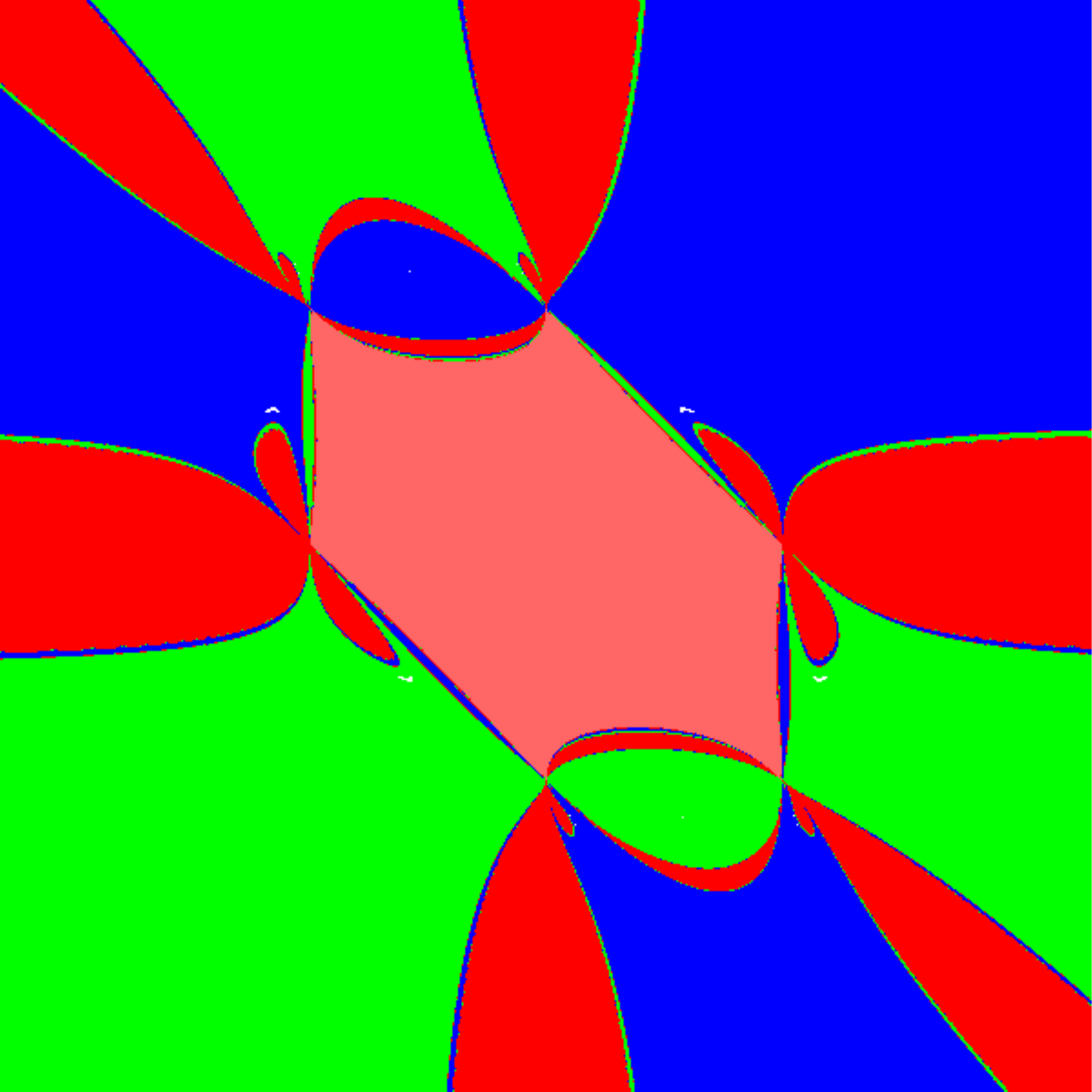}
      \put(-119,77) {\small $\bullet$ }
      \put(-115,97) {\LARGE$\nearrow$ }
            \put(-100,116) {\small $\bullet$ }
                  \put(-90,100) {\LARGE$\searrow$ }
                \put(-86,55) {\small $\bullet$ }
                                  \put(-79,70) {\LARGE$\swarrow$ }
                     \put(-63,93) {\small $\bullet$}
                                                       \put(-107,70) {\LARGE$\nwarrow$ }
      }
    \subfigure[\scriptsize{$p(x)=\frac{x5}{5}-\frac{x^3}{3}-0.05x+0.15$.}]{
     \includegraphics[width=0.4\textwidth]{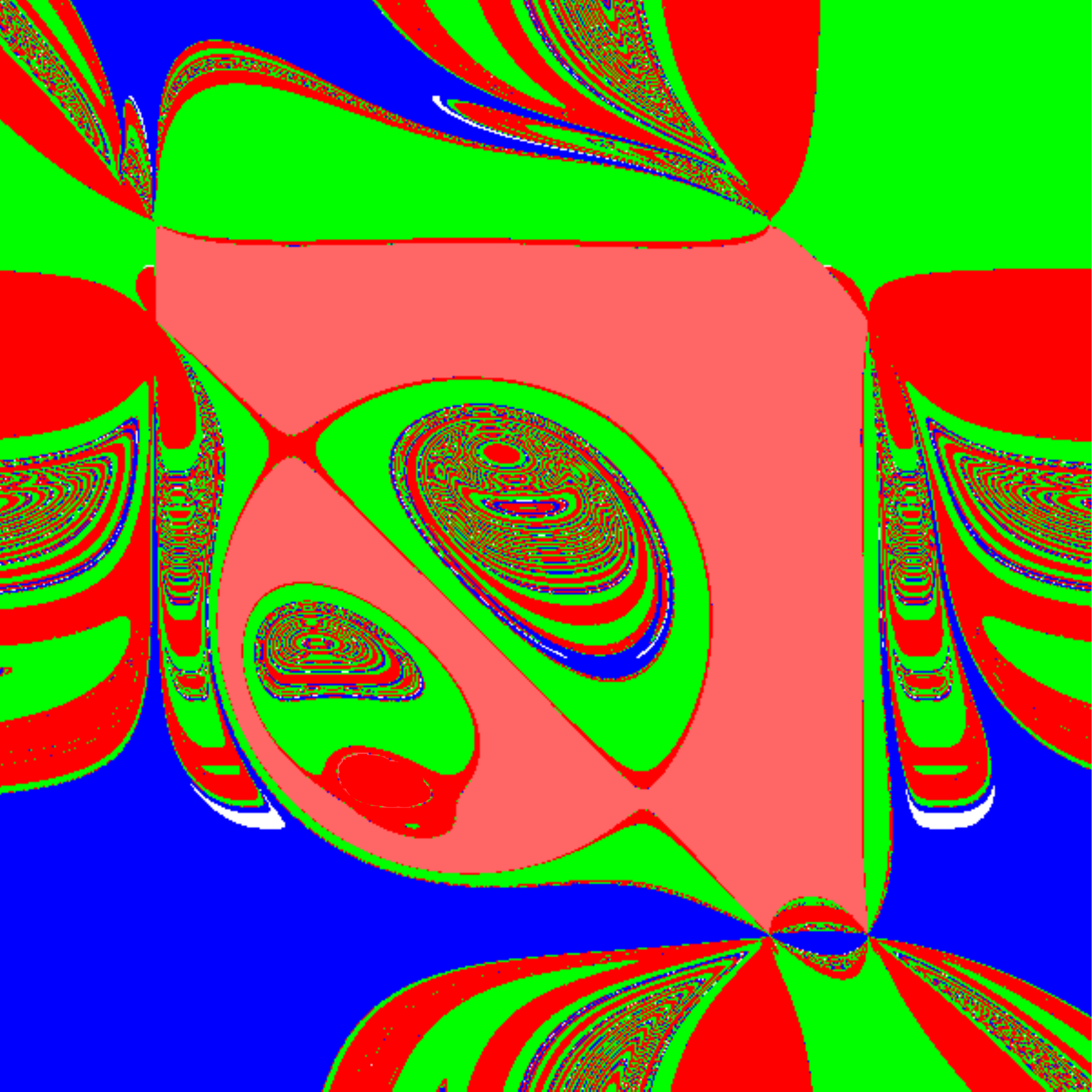}
           \put(-75,125) { $\nearrow$ }
      \put(-59,135) {\small $\bullet$ }
            \put(-55,127) {$\searrow$ }
                        \put(-45,75) {\LARGE $\downarrow$ }
            \put(-42,126) {\small $\bullet$ }
                                    \put(-68,38) {\Large $\nwarrow$ }
                \put(-51,28) {\small $\bullet$ }
                                                    \put(-138,109) {$\nwarrow$ }
                     \put(-144,114) {\small $\bullet$}
                                          \put(-135,115) {\Large$\rightarrow$}
                     }
 \caption{\small{ Phase plane of the secant map applied to the Chebyshev polynomial $T_3$ (left) and to the polynomial $p$ (right).  We show the immediate basin of attraction of the internal root in pink. We also mark the 4-cycle contained in the boundary of the immediate basin proved in Theorem A. Range of the pictures  [-2,2]x[-2,2]. }}
    \label{fig:holes_4cycle}
    \end{figure}

\begin{proof}[Proof of Theorem A] Statement (a) follows from Proposition \ref{prop:boundary_4cycle} while statement (b) follows from Corollary \ref{coro:4cycle}.
\end{proof}

 We notice that Theorem A applies independently of the connectedness of the immediate basin of attraction of the internal root. We present two examples to focus on this fact. We consider the phase space of the secant map applied to the Chebyshev polynomial $T_3(x)= 4x^3-3x$, see Figure \ref{fig:holes_4cycle} (left), in contrast with the phase space of the secant map applied to the polynomial $p(x)=\frac{x5}{5}-\frac{x^3}{3}-0.05x+0.15$, see Figure \ref{fig:holes_4cycle} (b). In both cases the two polynomials exhibit three simple root, and thus in both cases there exist a unique internal root. In pink we show the immediate basin of attraction of the internal root. In the case of $T_3$ the immediate basin of attraction is simply connected while in the case of $p$ the immediate basin is multiply connected.  Moreover, we numerically compute the 4-cycle contained in the boundary of the immediate basin of attraction as Theorem A states. Every point in the 4-cycle is depicted in the phase plane with a small black circle. Finally, we mention that Theorem A only deals with the external boundary of the immediate basin of attraction of the internal root.  In the next section we precisely focus on sufficient conditions which ensure that the immediate basin of attraction of an internal root is simply connected.
 
\section{Proof of Theorem B}\label{section:TB}

As in the previous section we assume, without lost of generality,  that $\alpha_0 < \alpha_1 <\alpha_2$ are three consecutive real simple roots of $p$ and $p'(\alpha_0)>0$, $p'(\alpha_1)<0$ and $p'(\alpha_2)>0$.  We denote by  $R, H_{y_0}$ and $V_{y_0}$ the  following open sets 
\[
\begin{array}{ll}
R & =\{(x,y)\in \mathbb R^2 \ ; \ \alpha_0<x<\alpha_2, \, \alpha_0<y<\alpha_2\}, \\
H_{y_0}  & = \{ (x,y_0) \in \mathbb R^2 \, ; \, \alpha_0 < x <  \alpha_2\},   \\
V_{y_0} & =\{(y_0,y)\in \mathbb R^2 \, ; \, \alpha_0 < y <  \alpha_2\}.
\end{array}
\]
Moreover, we introduce the auxiliary map 
\begin{equation}\label{eq:varphi}
\varphi_y(x)= y - \frac{p(y)}{q(x,y)}
\end{equation}
which coincides with the second component of the secant map; i.e.,  $S(x,y)=(y,\varphi_y(x))$ where remember that the polynomial $q$ was defined in \eqref{eq:q}.

We now investigate the connectedness of the basin of attraction of an internal root $\alpha_1$.  In the next lemma we count the number of inverses of the secant map for a given point $(x,y)\in R$. In particular this lemma will apply to points in $\mathcal A^{\star}\left(\alpha_1\right)$ (see Lemma \ref{lem:bounded}). 

\begin{lemma}\label{lem:inverse}
Let $p$ be a polynomial and let  $\alpha_0 < \alpha_1 < \alpha_2$ be three consecutive simple real roots of $p$. Assume further that $p$ has only one inflection point in the interval $(\alpha_0, \alpha_2)$.  Then for  any point  $(x,y)\neq  (\alpha_1,\alpha_1)$ in $R$  we have  that $\#\{S^{-1}(x,y)\}\leq 2$ where $S^{-1}$ means preimages of $(x,y)$ in $R$. 
\end{lemma}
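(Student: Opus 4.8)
The plan is to count, for a fixed target $(x,y)\in R$, the number of seeds $(u,v)\in R$ with $S(u,v)=(x,y)$. Since $S(u,v)=(v,\varphi_v(u))$, the first coordinate forces $v=x$, so every preimage has the form $(u,x)$ and the remaining condition is $\varphi_x(u)=y$. Thus the whole question reduces to counting the solutions $u\in(\alpha_0,\alpha_2)$ of the single scalar equation
\begin{equation}\label{eq:inverse_count}
\varphi_x(u)= x-\frac{p(x)}{q(u,x)}=y,
\end{equation}
subject to $(u,x)\in R$, i.e. $u\in(\alpha_0,\alpha_2)$. First I would fix $x\in(\alpha_0,\alpha_2)$, treat $x$ as a parameter, and study the map $u\mapsto \varphi_x(u)$ on the interval $(\alpha_0,\alpha_2)$; the number of preimages of $(x,y)$ is exactly the number of times the horizontal level $y$ is hit by this one-variable function.

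The next step is to compute $\varphi_x'(u)$ and show that, under the single-inflection-point hypothesis, $u\mapsto\varphi_x(u)$ has at most one interior critical point in $(\alpha_0,\alpha_2)$, hence is either monotone or has a single fold — which yields at most two solutions of \eqref{eq:inverse_count} and proves the bound. Using $q(u,x)=\bigl(p(u)-p(x)\bigr)/(u-x)$ from \eqref{eq:q}, one differentiates $\varphi_x(u)=x-p(x)/q(u,x)$ in $u$; the sign of $\varphi_x'(u)$ is governed by the sign of $\partial_u q(u,x)$, which in turn is controlled by the geometry of the secant line through $(u,p(u))$ and $(x,p(x))$. The key geometric translation is that $q(u,x)$ is the slope of that secant chord, and its critical points in $u$ correspond to the chord being tangent to the graph of $p$; the number of such tangency configurations as $u$ varies is tied to the convexity/concavity changes of $p$ on $(\alpha_0,\alpha_2)$. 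Since $p$ has exactly one inflection point in $(\alpha_0,\alpha_2)$, the graph has one concave and one convex piece there, and I expect this to force at most one critical value of $u$, giving the unimodality of $\varphi_x$.

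The step I expect to be the main obstacle is making the passage from ``one inflection point'' to ``at most one critical point of $\varphi_x$'' fully rigorous and uniform in the parameter $x$. The difficulty is that $\varphi_x'(u)=0$ is not literally a tangency condition on $p$ but a relation coupling $p(u)$, $p'(u)$, $p(x)$ and $x$, so one must show that for each fixed $x$ this coupled equation has at most one root $u\in(\alpha_0,\alpha_2)$. I would handle this by clearing denominators to rewrite $\varphi_x'(u)=0$ as a condition of the form
\begin{equation}\label{eq:tangency}
p(u)-p(x)=p'(u)\,(u-x),
\end{equation}
i.e. the tangent line to $p$ at $u$ passes through $(x,p(x))$, and then argue via Rolle/convexity that the function $u\mapsto p(u)-p(x)-p'(u)(u-x)$ has a derivative $-p''(u)(u-x)$ that changes sign at most once on $(\alpha_0,\alpha_2)$ because $p''$ vanishes only once there. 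A careful sign analysis, splitting according to whether $u$ lies to the left or right of $x$ and of the unique inflection point, should then pin down that \eqref{eq:tangency} has at most one solution, completing the bound $\#\{S^{-1}(x,y)\cap R\}\le 2$.
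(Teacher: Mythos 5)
Your reduction is the right first move: every preimage of $(x,y)$ in $R$ has the form $(u,x)$ with $\varphi_x(u)=y$, so the lemma becomes a level-crossing count for the one-variable map $u\mapsto\varphi_x(u)$. But the plan has two holes, one fixable and one genuine. The fixable one: your tangency equation $p(u)-p(x)=p'(u)(u-x)$ \emph{always} has the solution $u=x$, since $g(u):=p(u)-p(x)-p'(u)(u-x)$ satisfies $g(x)=g'(x)=0$; moreover $g'(u)=-p''(u)(u-x)$ changes sign at $u=x$ as well as at the inflection point, so both ``$g'$ changes sign at most once'' and ``at most one solution'' are false as stated. What your case analysis can actually deliver is that $g$ has at most one zero \emph{other than} $u=x$, and that the root $u=x$ is an artifact of clearing denominators: indeed $g(u)=-(u-x)^2\,\partial_u q(u,x)$, and $\varphi_x'(x)=p(x)p''(x)/\bigl(2p'(x)^2\bigr)\neq 0$ unless $x$ is the inflection point. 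The genuine gap: the inference ``at most one critical point, hence at most two solutions'' presupposes that $\varphi_x$ is continuous on $(\alpha_0,\alpha_2)$, and it is not. For every $x\in(\alpha_0,\alpha_2)$ with $x\neq\alpha_1$ the denominator $q(u,x)$ vanishes at some $\tilde x\in(\alpha_0,\alpha_2)$ (the curve $\delta_S$ of \eqref{eq:delta_s} crosses the horizontal line of preimages inside $R$), so $\varphi_x$ has a vertical asymptote there. A function with one pole and one fold can take a value three times, so unimodality alone only gives the bound $3$. To recover the bound $2$ you must also use the boundary values $\varphi_x(\alpha_0)=\alpha_0$, $\varphi_x(\alpha_2)=\alpha_2$ and the opposite one-sided limits $\pm\infty$ at $\tilde x$, and check branch by branch that the branch without the critical point contributes at most one crossing of levels in $(\alpha_0,\alpha_2)$ (in fact it maps outside, or hits each level once, depending on the configuration). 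This is precisely the analysis the paper carries out separately in Lemma \ref{lem:vert}, and it cannot be skipped.

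For comparison, the paper's own proof bypasses $\varphi_x$, its critical points and its poles entirely. If $(x_1,y_1)\in R$ had three preimages $(w_0,x_1),(w_1,x_1),(w_2,x_1)$ in $R$, then, because the secant map sends $(w_i,x_1)$ to $(x_1,\cdot)$ by intersecting the line through $\bigl(w_i,p(w_i)\bigr)$ and $\bigl(x_1,p(x_1)\bigr)$ with the horizontal axis, all three of these lines pass through the same point $(y_1,0)$; hence the four points $\bigl(w_i,p(w_i)\bigr)$, $i=0,1,2$, and $\bigl(x_1,p(x_1)\bigr)$ lie on one line $r$. Subtracting that affine function from $p$ produces four zeros in $(\alpha_0,\alpha_2)$, and two applications of Rolle force at least two zeros of $p''$ there, contradicting the single-inflection-point hypothesis. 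You could either adopt this collinearity-plus-Rolle argument, or complete your fold-counting route by adding the pole/boundary-value analysis above, which in effect reproves Lemma \ref{lem:vert} along the way.
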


\begin{proof}

We reason by contradiction. We assume that there exists $(x_1,y_1)\in R$ with three different preimages in $R$, say $(w_0,x_1), \, (w_1,x_1)$ and $(w_2,x_1)$ so that  $S(w_i,x_1) = (x_1,y_1)$ with $i=0,1,2$. Renaming these points if necessary we can assume that $w_0 < w_1 < w_2$. Let $r$ be the line passing  through $(x_1,p(x_1))$ and 
$(y_1,0)$.  By construction the points  $(w_i,p(w_i)),\ i=0,1,2$  belong to $r$.  Thus, the line $r$ contains the points $(x_1,p(x_1))$ and $(w_i,p(w_i)), \ i=0,1,2$ and this implies the existence of at least two inflection points of $p$ in the interval defined by $\beta_0:=\min\{x_1,w_0\}$ and $\beta_2:=\max\{x_1,w_2\}$ with $[\beta_0,\beta_2]\subset (\alpha_0,\alpha_2)$, a contradiction with the assumptions.  See Figure  \ref{fig:inverse}.

%Finally, we can interprect geometrically the preimages of $S$.  See Figure \ref{fig:connec}.  Points in $R$ with two preimages in $R$ correspond to lines intersecting the graph of $p$ at three points (in a transversal way) in the interval $(\alpha_0,\alpha_2)$, in contrast points with a unique preimage correspond to  the case where the two preimages collide or, in other words, correspond to  lines intersecting the graph of $p$ twice,  one point in a  transversal  way and one point  in tangent form.  

\end{proof}

\begin{figure}[ht]
    \centering
     \includegraphics[width=0.40\textwidth]{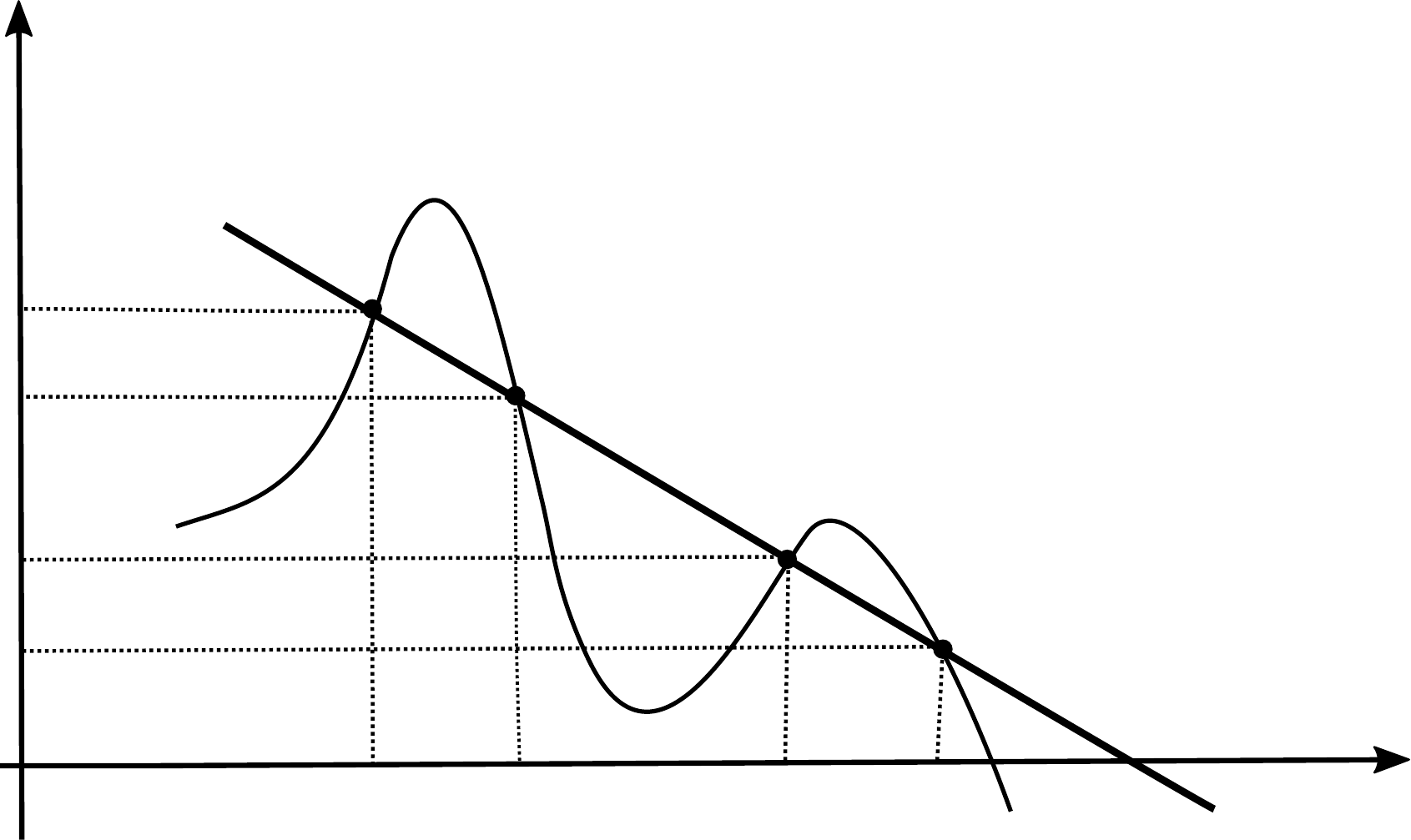}
    \put(-84,3) {\small $w_1$ } 
    \put(-65,3) {\small $w_2$ } 
    \put(-136,3) {\small $w_0$ } 
    \put(-40,3) {\small $y_1$ }
    \put(-116,3) {\small $x_1$ }
    \put(-148,78) {\small $r$ }
     \put(-198,20) {\footnotesize $p(w_2)$}
      \put(-198,33) {\footnotesize $p(w_1)$}
      \put(-198,55) {\footnotesize $p(x_1)$}
       \put(-198,65) {\footnotesize $p(w_0)$}
             \caption{\small{Sketch of the inverses of the point $(x_1,y_1)$.}}
    \label{fig:inverse}
    \end{figure}

\begin{lemma}\label{lem:vert}
Let $p$ be a polynomial and let  $\alpha_0 < \alpha_1 < \alpha_2$ be three consecutive simple real roots of $p$. Assume further that  $p$ has only one inflection point in $(\alpha_0, \alpha_2)$ and let $y_0 \in  (\alpha_0, \alpha_2)$. Then the set $J_{y_0}:= S(H_{y_0}) \cap \overline{R}$ is a closed vertical segment belonging to 
$\overline{V_{y_0}}$ and, if $y_0\ne \alpha_1$, any point of $J_{y_0}$ has two preimages in $H_{y_0}$, counting multiplicity. Moreover, 
\begin{itemize}
\item[(a)] if $y_0 < \alpha_1$ then $J_{y_0}= [\varphi(x_0^*(y_0)), \alpha_2]$, 
\item[(b)] if $y_0 > \alpha_1$ then $J_{y_0}= [\alpha_0,\varphi(x_0^*(y_0))]$,
\item[(c)] if $y_0 = \alpha_1$ then $J_{y_0}= [ \alpha_1,\alpha_1]$ (degenerate closed interval),
\end{itemize}
\noindent where  $x_0^*(y_0)$ is the unique point in $H_{y_0}$ such that $\frac{\partial q}{\partial x }(x_0^*(y_0),y_0)=0$.
\end{lemma}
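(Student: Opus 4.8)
The plan is to study the second component of the secant map restricted to the horizontal line $H_{y_0}$, namely the one-variable map $x \mapsto \varphi_{y_0}(x) = y_0 - p(y_0)/q(x,y_0)$, and to understand its image and its critical structure. First I would compute $\frac{\partial \varphi_{y_0}}{\partial x}(x) = p(y_0)\, q_x(x,y_0)/q(x,y_0)^2$, so that the critical points of $\varphi_{y_0}$ on $H_{y_0}$ coincide exactly with the zeros of $q_x(\cdot,y_0)$ (away from the poles $q=0$). The key claim to establish is that, under the single-inflection hypothesis, $q_x(\cdot,y_0)$ has exactly one zero $x_0^*(y_0)$ in the relevant range, which makes $\varphi_{y_0}$ unimodal on each branch and forces its image to be a single closed half-segment. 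I would verify this by relating $q_x=0$ to the geometric picture: since $q(x,y_0)=(p(x)-p(y_0))/(x-y_0)$ is the slope of the secant line through $(x,p(x))$ and $(y_0,p(y_0))$, the condition $q_x(x,y_0)=0$ says this slope is extremal in $x$, which happens precisely at the $x$ where the secant line from $(y_0,p(y_0))$ is tangent to the graph of $p$; the single-inflection hypothesis guarantees uniqueness of such a tangency point, giving a unique $x_0^*(y_0)$.

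Next I would determine the endpoints of the image interval. As $x \to \alpha_0^+$ and $x \to \alpha_2^-$ along $H_{y_0}$, I would track where $\varphi_{y_0}$ sends these endpoints, using that $\partial R \cap \mathcal A^*(\alpha_1)=\emptyset$ (Lemma \ref{lem:bounded}) and the sign conventions $p>0$ on $(\alpha_0,\alpha_1)$, $p<0$ on $(\alpha_1,\alpha_2)$. The behaviour at the focal points $Q_{i,j}$ on the boundary of $H_{y_0}$ should pin the image down to reach $\alpha_2$ (resp. $\alpha_0$). Combining the unimodality (from the unique critical point $x_0^*(y_0)$) with these endpoint computations yields that $J_{y_0}$ is the closed segment running from the critical value $\varphi(x_0^*(y_0))$ to $\alpha_2$ when $y_0<\alpha_1$, and symmetrically to $\alpha_0$ when $y_0>\alpha_1$; the sign of $p(y_0)$ controls whether $\varphi_{y_0}$ opens upward or downward, which is what splits cases (a) and (b). The degenerate case (c) is immediate: when $y_0=\alpha_1$ we have $p(y_0)=p(\alpha_1)=0$, so $\varphi_{\alpha_1}\equiv \alpha_1$ and $S(H_{\alpha_1})=\{(\alpha_1,\alpha_1)\}$, giving $J_{\alpha_1}=[\alpha_1,\alpha_1]$. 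The two-preimage count for points of $J_{y_0}$ (with multiplicity) is then an immediate consequence of unimodality: a unimodal map on an interval takes each value in its image exactly twice counting multiplicity, with the critical value attained once.

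The main obstacle I expect is the rigorous proof that $q_x(\cdot,y_0)$ has a \emph{unique} zero in $H_{y_0}$ from the single-inflection hypothesis, and that this zero lies in the correct subinterval so that $\varphi_{y_0}$ is genuinely monotone on each side. The subtlety is that $\varphi_{y_0}$ has poles where $q(x,y_0)=0$ (i.e. where the secant line is horizontal, $p(x)=p(y_0)$), so "unimodal" must be interpreted branch-by-branch between consecutive poles; I would need to check that only one branch of $H_{y_0}$ actually maps into $\overline R$ and that on that branch $x_0^*(y_0)$ is the sole critical point. This reduces, via the tangency interpretation above, to a convexity-type statement: with a single inflection point in $(\alpha_0,\alpha_2)$, the graph of $p$ is convex on one subinterval and concave on the other, so a line through the fixed point $(y_0,p(y_0))$ can be tangent to the graph at only one point, yielding the desired uniqueness. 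Establishing that tangency count cleanly — and confirming the endpoint landing at $\alpha_0$ or $\alpha_2$ rather than at an interior point — is the technical heart of the argument.
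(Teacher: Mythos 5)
Your proposal is correct and follows essentially the same route as the paper's proof: both reduce the lemma to the one-variable map $\varphi_{y_0}$ on $H_{y_0}$, identify its critical points with the zeros of $q_x(\cdot,y_0)$ via the tangency/secant-slope interpretation, invoke the single-inflection hypothesis for uniqueness of $x_0^*(y_0)$, and handle the pole of $\varphi_{y_0}$ (where $H_{y_0}$ meets $\delta_S$) so that unimodality on the relevant branch yields the half-segment image and the two-preimage count. The only cosmetic difference is that the paper establishes existence of the critical point by a Bolzano-type argument using the vertical asymptote together with the values $\varphi_{y_0}(\alpha_1)=\alpha_1$ and $\varphi_{y_0}(\alpha_2)=\alpha_2$ (resp.\ $\varphi_{y_0}(\alpha_0)=\alpha_0$), while you obtain existence and uniqueness together from the tangency picture.
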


\begin{proof} 
First remember that $\varphi_y(x)$ is defined in \eqref{eq:varphi} as the second component of the secant map. Therefore we already know that if $y_0=\alpha_1$ then $\varphi_{y_0}(x)\equiv \alpha_1$ and (c) follows. In what follows we fix a concrete value of $y_0 \in (\alpha_0,\alpha_2)$ with $y_0\ne \alpha_1$. On the one hand from the expression of the secant map we have that  $S(H_{y_0}) \cap \overline{R}$ is a closed vertical segment $J_{y_0}:=\left[a,b\right] \subset \overline{V_{y_0}}$.  On the other hand it is a direct computation to see that  
\begin{equation*}
\varphi'(x)=\frac{ p(y_0)}{q^2(x,y_0)} \frac{\partial q}{\partial x }(x,y_0) \quad {\rm and} \quad \frac{\partial q}{\partial x} (x,y_0)= \frac{p'(x)-q(x,y_0)}{x-y_0}.
\end{equation*}
Observe from   \eqref{eq:q} that $q(x,y)$ is a polynomial and simple computations show that  when $x=y_0$ the second formula becomes $\partial q / \partial x (y_0,y_0)=p''(y_0)/2$. Hence $\varphi^{\prime}(x)$  vanishes if and only if   $p^{\prime}(x)-q(x,y_0)=0$ for $x \neq y_0$, or $p''(y_0)=0$ if $x=y_0$. % (remember that there is one and only one $y_0$ with this property under the hypothesis of the lemma).

As already said, the focal points $Q_{i,j}\in \delta_S$, the
set of non definition of map $S$ (now we focus on $i\ne j\in\{0,1,2\}$). Moreover, it is easy to argue that the points $(c_{k},c_{k}),\ k=1,2$ also belong to $\delta_S\cap R$. It follows that an arc of $\delta_{S}$ must exist in $R$ connecting the points $Q_{0,2}$, $(c_{1},c_{1})$ and
$Q_{1,0}$ (as qualitatively shown in Figure \ref{fig:period4}), so that for $\alpha
_{0}<y_{0}<\alpha_{1}$ the graph of $\varphi(x)$ has a
vertical asymptote for $x\in(\alpha_{0},\alpha_{1}).$ Similarly, an
arc of $\delta_{S}$ must exist in $R$ connecting the points
$Q_{1,2}$, $(c_{2},c_{2})$ and $Q_{2,1}$ (as
qualitatively shown in Figure \ref{fig:period4}), so that for $\alpha_{1}<y_{0}<\alpha_{2}
$ the graph of $\varphi(x)$ has a vertical asymptote for
$x\in(\alpha_{1},\alpha_{2}).$

We claim that there exists a unique  point $x_0^\star:=x_0^\star(y_0)$  in $(\alpha_0,\alpha_2)$ verifying  $\partial q  / \partial x (x_0^\star,y_0)=0$ (i.e., verifying that $\varphi^{\prime}(x_0^\star)=0$). See Figure  \ref{fig:varphi}. To see the claim we consider first $y_0\in(\alpha_0,\alpha_1)$. From the above paragraph we know there exists $\tilde{y}_0$ such that $\varphi |_{\left(\tilde{y}_0,\alpha_2\right)}$ satisfies
\begin{equation*}%\label{eq:asym}
\displaystyle \lim_{x \to \tilde{y}_0^+}  \varphi(x) = + \infty , \quad \varphi(\alpha_1) = \alpha_1 \quad {\rm and} \quad \varphi(\alpha_2)=\alpha_2.
\end{equation*}
Hence, since $\varphi |_{(\tilde{y}_0,\alpha_1)}$ is smooth, we might conclude using Bolzano that there exist $x_0^\star \in (\alpha_1,\alpha_2)$ such that $\varphi^{\prime}\left(x_0^\star\right)=0$ (in fact it is a local minimum of $\varphi$). Uniqueness follows from the fact that there is no further change of convexity for the polynomial $p$.  It remains to check the case $y_0\in(\alpha_1,\alpha_2)$. In this case the previous paragraph indicates the existence of $\tilde{y}_0$ such that $\varphi |_{\left(\alpha_0,\tilde{y}_0\right)}$ satisfies
\begin{equation*}%\label{eq:asym}
\displaystyle \lim_{x \to \tilde{y}_0^-}  \varphi(x) = - \infty , \quad \varphi(\alpha_0) = \alpha_0 \quad {\rm and} \quad \varphi(\alpha_1)=\alpha_1.
\end{equation*}
Arguing similarly we find that there exist $x_0^\star \in (\alpha_0,\alpha_1)$ such that $\varphi^{\prime}\left(x_0^\star\right)=0$ (in fact it is a local maximum of $\varphi$), and uniqueness is due to the non existence of further convexity changes.

 \begin{figure}[ht]
    \centering
    \subfigure[\scriptsize{Phase space of the secant map of the Chebyshev polynomial $T_3(x)=4x^3-3x$. Range of the picture $[-1,1]\times[-1,1]$.}]{
\includegraphics[width=0.45\textwidth]{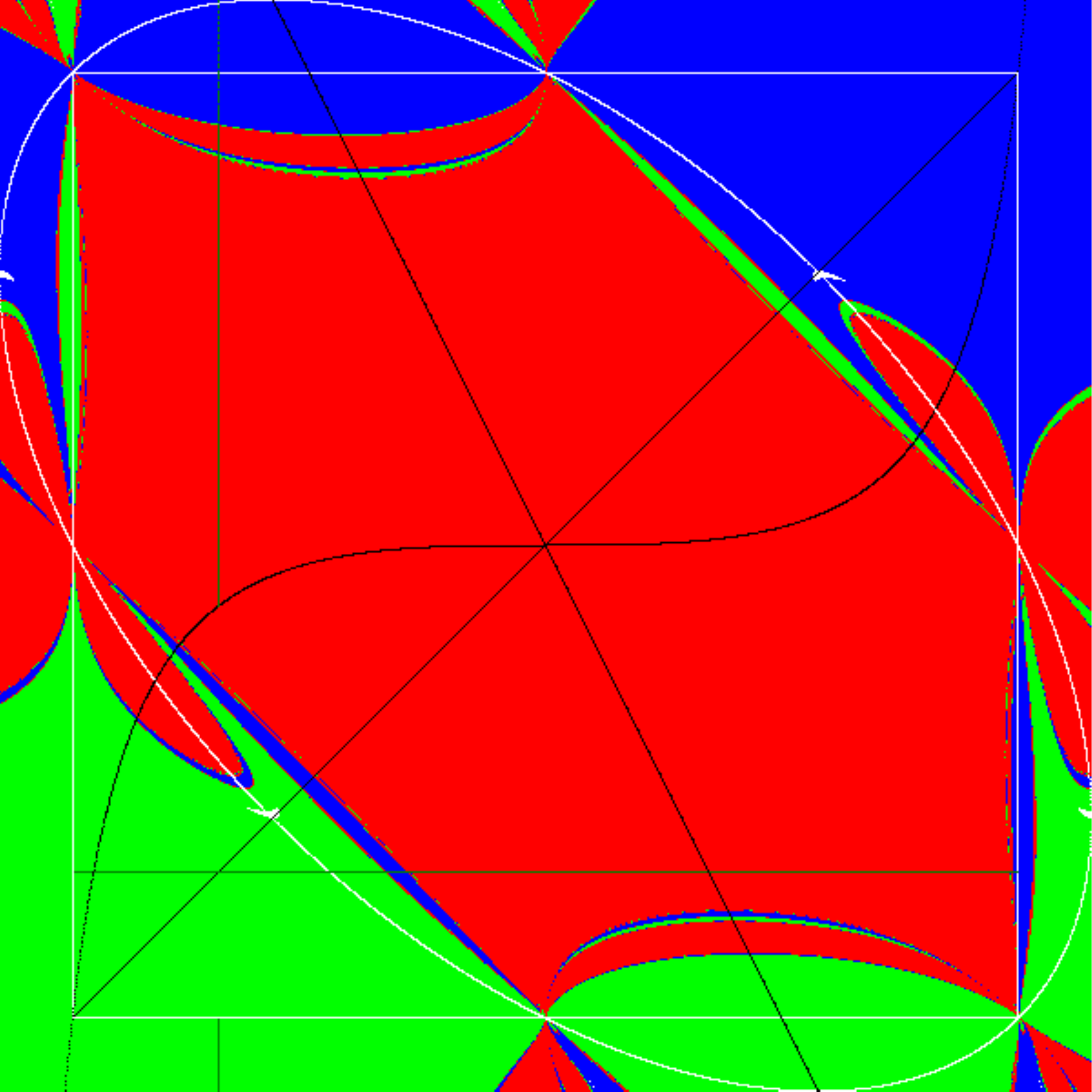}}
          \put(-199,40){\scriptsize $H_{y_0}$}
          \put(-155,135){\small \w{$J_{y_0}$}}
          \put(-73,38){\small \w{$ \bullet \, x^*$}}
           \put(-165,88){\footnotesize \w{ $ \bullet \, \varphi(x^*)$}}
          \put( -68,167){\small \w{$\delta_S $}}
          \put( -122,151){\small \w{$q_x=0 $}} 
                   \put( -145,25){\small \w{$\delta_S $}}
           \put( -45,175){\small \w{$R$}}
           \put( -45,95){\small \w{$\Gamma$}}
        \subfigure[\scriptsize{Graph of $\varphi(x) = y_0 - T_3(y_0)/ q(x,y_0)$.}]{
\includegraphics[width=0.59\textwidth]{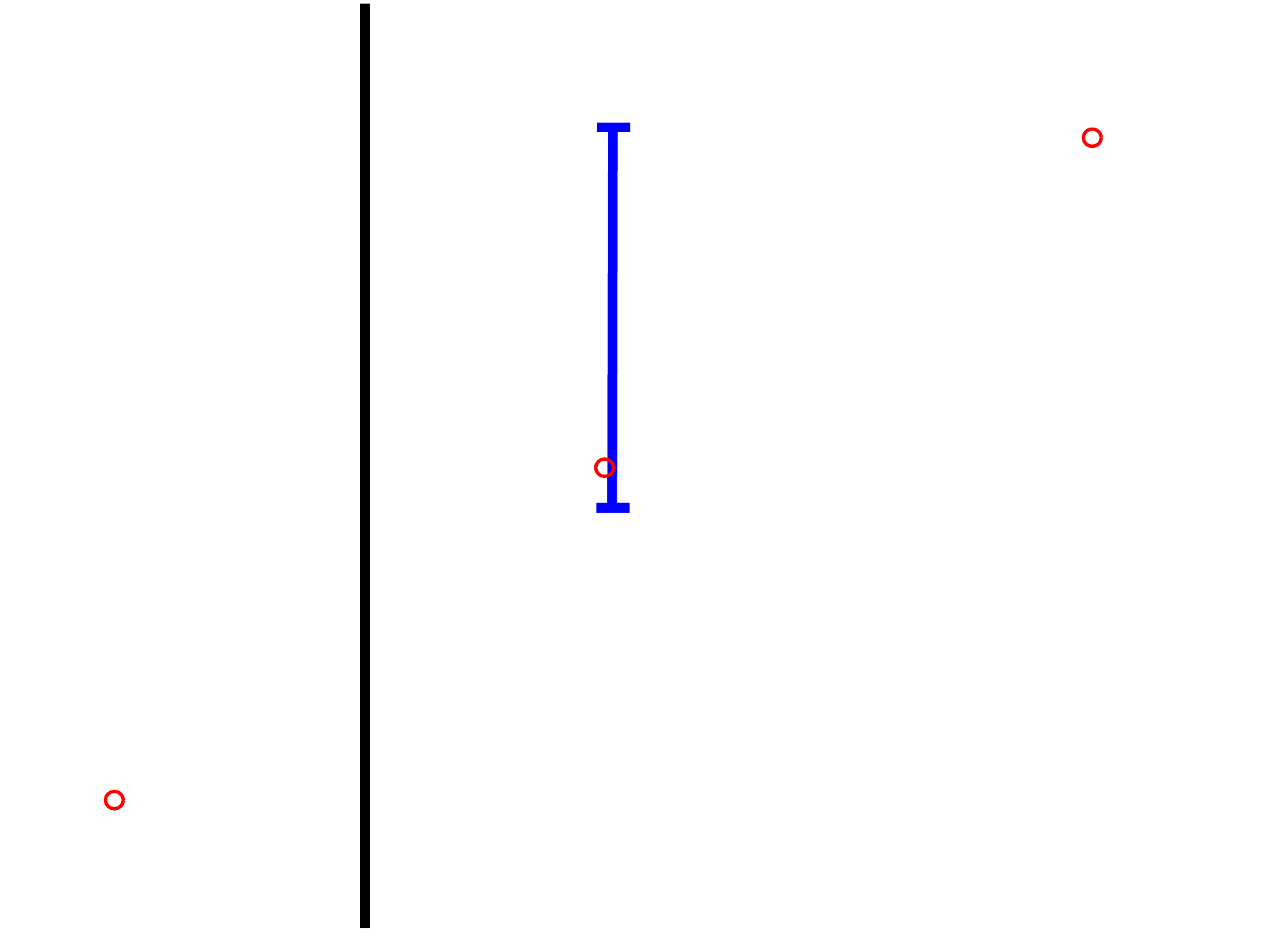}} 
           \put( -100,88){\small $\bullet$} 
                      \put( -95,80){\small $(x^*, \varphi(x^*))$} 
                      \put( -35,150){\small $y= \varphi(x)$} 
                       \put( -130,135){\small {\b  $J_{y_0}$}} 
 \caption{\small{The secant map applied to $T_3$. In (a) it is shown the line $H_{y_0}$ with $y=y_0$ in the interval $\left(\alpha_0,\alpha_1\right)$. In (b) we show the graph of the related function $\varphi_{y_0}(x)$ when $y_0\in \left(\alpha_0,\alpha_1\right)$.}}
    \label{fig:varphi}
    \end{figure}

From the description performed so far,  we clearly know how acts $S$ inside $R$, concretely fixing a value of $y_0 \in (\alpha_0,\alpha_2)$ there exist a unique point  $x_0^*:= x_0^*(y_0) \in (\alpha_0,\alpha_2)$ solution of $\partial q / \partial x=0$  and  the map $S(\cdot,y_0)$ is monotone for $x\in (\alpha_0, x_0^*)$ and monotone for $ x \in (x_0^*, \alpha_2) $ with a turning point at $x_0^*$ and a vertical asymptote. So the lemma follows.  
\end{proof}

For the shake of clarity we exemplify Lemma \ref{lem:vert} with the study of one particular case. In Figure \ref{fig:varphi} (left) we show the phase space of the secant map applied to the  Chebyshev polynomial of degree $T_3(x)=4x^3-3x$. The three roots of $T_3$ are given by 
$\alpha_0= -\sqrt{3}/2$, $\alpha_1=0$ and $\alpha_2=\sqrt{3}/2$.  It is easy to compute explicitly all the dynamical objets appearing in the above proof since the degree of the polynomial is three. Thus for example we can compute  the set of no definition of the secant map $\delta_S$  \eqref{eq:delta_s}  given by  
$$q(x,y)=  \frac{T_3(x) -T_3(y)}{x-y} = 4x^2+4xy+4y^2-3=0, $$
\noindent which is an ellipse, and the set of points where $q_x:= \partial q / \partial x =0$ is given by the line $y =  -2x$.  Moreover, given  $ y_0  \in ( \alpha_0, \alpha_1)$ we can  evaluate explicitly $x_0^*(y_0)= - y_0/2$  and we can compute the image of $H_{y_0}$ under the map $\varphi: (\alpha_0,\alpha_2) \mapsto \mathbb R$

$$
\varphi(x)= y_0 - \frac{p(y_0)}{q(x,y_0)}= y_0 - \frac{4y_0^3-3y_0}{ 4 x^2 + 4 y_0 x + 4y_0^2 -3}.
$$
The map $\varphi$, see  Figure \ref{fig:varphi} (right), exhibits in the interval $(\alpha_0,\alpha_2)$,  a vertical asymptote at the point $\tilde{y}_0\in\left(\alpha_0,\alpha_1\right)$ in which the line $y=y_0$ intersects $\delta_S$ given by
$$
\tilde{y}_0:= -\frac{ y_0 +  \sqrt{3 (1-y_0^2)} }{2},
$$ 
and has a local minimum at the point $(x_0^*(y_0),\varphi(x_0^*(y_0)):=(-y_0/2, \varphi(-y_0/2))$. Obtaining thus that $J_{y_0}=  S(H_{y_0}) \cap \overline{R} =  [\varphi(-y_0/2),\alpha_2].$ 
%\end{proof}

Next technical lemma is the last result we need to proof Theorem B. Its content gives further information about the sets 
\begin{equation}
\begin{split}
&{\rm LC}_{-1}:=\{(x,y)\in R \ |\ x \neq y, \,  DS(x,y)=0 \}, \\
& {\rm LC}:=\{S(x,y) \ |\ (x,y)\in {\rm LC}_{-1} \}.
\end{split}
\end{equation}
In particular ${\rm LC}$ is the set of points where we cross from regions where points have either zero or two preimages in $R$. From the definition  of the secant map it is easy to see that $S(x,x)=(x,N_p(x))$, where $N_p(x):=x-p(x)/p^{\prime}(x)$ is the {\it Newton's map} associated to $p$.

\begin{lemma}\label{lemma:theta_gamma}
Let $p$ be a polynomial and let  $\alpha_0 < \alpha_1 < \alpha_2$ be three consecutive simple real roots of $p$. Assume  that  $p$ has only one inflection point, denoted by $\gamma_0$, in the interval $(\alpha_0, \alpha_2)$.  Then the following statements hold
\begin{itemize}
\item[(a)] The set ${\rm LC}_{-1} \cup \{(\gamma_0,\gamma_0)\}$ is given by 
$$
\Theta=\{(x,y)\in R \ | \   y\in \left(\alpha_0,\alpha_2\right),\ x\ne y, \  \ x=x^{\star}(y) \} \cup \{ (\gamma_0,\gamma_0)\},
$$
where $x=x^{\star}(y)$ is the unique point such that $q_x(x^{\star}(y),y)=0$ with $x\ne y$ unless $y=\gamma_0$ for which $x^\star(\gamma_0)=\gamma_0$. So it can be written as the graph of a function $y \mapsto \Theta(y),\ y\in \left(\alpha_0,\alpha_2\right)$ . Moreover $\Theta(y)$ is strictly decreasing.
\item[(b)] 

The set $\Gamma: =  {\rm LC} = S(\Theta)$ is given by the graph of $N_p$ evaluated at the point $x^{\star}(y)$. Equivalently,
\begin{equation} \label{eq:Gamma}
\Gamma=\{ \left(y ,N_p(x^{\star}(y) \right)  \,  |  \, (x^*(y),y)\in \Theta  \}.
\end{equation}
Let $\xi $ be such that  $\Theta \cap \{x=\alpha_1\} = (\alpha_1,\xi)$, then
\begin{itemize}
\item If $\gamma_0 <\alpha_1$ then $\Gamma$ has a local minimum at $(\gamma_0,N_p(\gamma_0))$ and a local maximum at $(\xi,\alpha_1)$.
\item If $\gamma_0 >\alpha_1$ then $\Gamma$ has a local maximum at $(\gamma_0,N_p(\gamma_0))$ and a local minimum at $(\xi,\alpha_1)$. 
\item If $\gamma_0=\alpha_1$ then $\Gamma$ is strictly increasing and has an inflection point at $(\alpha_1,\alpha_1)$.
\end{itemize}

% $N_p\left(x^\star(y)\right)$ has two critical points one of those at $y=\gamma_0$ and the other at $y=\bar{\xi}$ where $x^\star(\bar{\xi})=\alpha_1$. If 
%$\gamma_0<\alpha_1$ (respectively $\gamma_0>\alpha_1$) then $y=\gamma_0$ is a local minimum (respectively maximum) and  $y=\alpha_1$ is a local maximum (respectively minimum). If $\gamma_0 = \alpha_1$ then $\Gamma$ has an inflection point at $y=\alpha_1$. See Figure \ref{fig:Gamma} for a qualitative draw of the graph of $\Gamma$.   
\item[(c)] The points $\left(\gamma_0,N(\gamma_0)\right)$ and $(\xi,\alpha_1)$ belong to $\mathcal A^\star(\alpha_1)$. %and so $\Gamma$ intersects the exterior boundary of $\mathcal A^\star(\alpha_1)$ in two points which are precisely the points of $I$ and $K$ defining $I_1\subset I$ and $K_1\subset K$ (i.e., the points covered by the turning points in $\Lambda$ and $J$, respectively).
\end{itemize}
\end{lemma}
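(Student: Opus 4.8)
The plan is to extract all three statements from a single computation of the Jacobian determinant of $S$ together with the one--inflection hypothesis. Writing $S(x,y)=(y,\varphi_y(x))$ as in \eqref{eq:varphi}, I would first record
\[
\det DS(x,y) = -\,\partial_x \varphi_y(x) = -\,\frac{p(y)}{q^2(x,y)}\,\frac{\partial q}{\partial x}(x,y),
\]
so that $\det DS=0$ on $R$ exactly on $\{p(y)=0\}\cup\{q_x=0\}$. The component $\{p(y)=0\}\cap R$ is the horizontal line $y=\alpha_1$, which $S$ collapses to the single point $(\alpha_1,\alpha_1)$; this is a degenerate collapse and not a fold arc, and it does not contribute to ${\rm LC}_{-1}$ (its image is a point, not part of $\Gamma$), so the genuine fold locus is $\{q_x=0\}$. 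Since $q(x,y)=(p(x)-p(y))/(x-y)$ is the first divided difference, one has $q_x(x,y)=\tfrac12 p''(\zeta)$ for some $\zeta$ strictly between $x$ and $y$ when $x\neq y$, and $q_x(y,y)=\tfrac12 p''(y)$ on the diagonal. Hence, for $x\neq y$, a point of $\{q_x=0\}$ forces $\gamma_0$ to lie strictly between $x$ and $y$ (so the locus straddles the diagonal), while on the diagonal $q_x=0$ holds only at $(\gamma_0,\gamma_0)$. Uniqueness of the solution $x=x^\star(y)$ for each fixed $y\neq\alpha_1$ is exactly Lemma \ref{lem:vert}, and the ``strictly between'' condition forces $x^\star(\gamma_0)=\gamma_0$, so the graph closes up continuously at $(\gamma_0,\gamma_0)$; this identifies ${\rm LC}_{-1}\cup\{(\gamma_0,\gamma_0)\}$ with $\Theta$.

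The strict monotonicity of $\Theta$ is the first delicate point. I would obtain it by implicit differentiation of the defining equation $F(x,y):=p'(x)(x-y)-(p(x)-p(y))=0$ of $\Theta$ off the diagonal (note $q_x=F/(x-y)^2$). Since $F_x=p''(x)(x-y)$ and $F_y=p'(y)-p'(x)$, one gets $\Theta'(y)=-F_y/F_x$, and the task reduces to showing $F_x$ and $F_y$ have the same sign. Here the single inflection is crucial: it makes $p'$ strictly monotone on each side of $\gamma_0$, with a unique interior extremum at $\gamma_0$. Applying the mean value theorem to $q(x^\star,y)=p'(x^\star)$ produces a point $\theta$ strictly between $y$ and $x^\star$ with $p'(\theta)=p'(x^\star)$; injectivity of $p'$ on each monotone branch then places $\theta$ on the branch containing $y$ (opposite to $x^\star$, which lies across $\gamma_0$), and comparing $p'(y)$ with $p'(\theta)=p'(x^\star)$ on that branch pins down $\mathrm{sign}(F_y)=\mathrm{sign}(F_x)$, giving $\Theta'<0$.

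For (b) the key identity is clean: on $\Theta$ the relation $q_x(x^\star,y)=0$ is equivalent to $q(x^\star,y)=p'(x^\star)$, i.e.\ the tangent to the graph of $p$ at $x^\star$ passes through $(y,p(y))$. Substituting into $\varphi_y$ gives
\[
\varphi_y(x^\star)=y-\frac{p(y)}{p'(x^\star)}=x^\star-\frac{p(x^\star)}{p'(x^\star)}=N_p(x^\star),
\]
so $S(x^\star(y),y)=(y,N_p(x^\star(y)))$ and $\Gamma=S(\Theta)$ is the claimed graph. Differentiating, $\Gamma'(y)=N_p'(x^\star(y))\,\Theta'(y)$ with $N_p'(x)=p(x)p''(x)/p'(x)^2$ and $\Theta'<0$ from (a); hence the critical points of $\Gamma$ occur exactly where $N_p'(x^\star)=0$, that is $x^\star\in\{\alpha_1,\gamma_0\}$, i.e.\ $y\in\{\xi,\gamma_0\}$. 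Reading the sign of $p(x^\star)p''(x^\star)$ as $x^\star$ crosses $\alpha_1$ (where $p$ changes sign) and $\gamma_0$ (where $p''$ changes sign), together with $N_p(\alpha_1)=\alpha_1$, classifies these points as stated: a local minimum at $(\gamma_0,N_p(\gamma_0))$ and a maximum at $(\xi,\alpha_1)$ when $\gamma_0<\alpha_1$, the reverse when $\gamma_0>\alpha_1$, and (the two critical values having merged) a single inflection at $(\alpha_1,\alpha_1)$ with $\Gamma$ increasing when $\gamma_0=\alpha_1$.

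Finally, for (c) the point $(\xi,\alpha_1)$ is immediate: $(\alpha_1,\xi)\in\Theta\subset R$ forces $\xi\in(\alpha_0,\alpha_2)$ and $N_p(\alpha_1)=\alpha_1$, so $(\xi,\alpha_1)$ lies on the horizontal segment $S_h=\{(x,\alpha_1):\alpha_0<x<\alpha_2\}$, which maps directly onto $(\alpha_1,\alpha_1)$ and hence lies in $\mathcal A^\star(\alpha_1)$. For $(\gamma_0,N_p(\gamma_0))=S(\gamma_0,\gamma_0)$ I would instead show that the diagonal segment $D$ joining $(\gamma_0,\gamma_0)$ to $(\alpha_1,\alpha_1)$ lies in $\mathcal A(\alpha_1)$; since $\gamma_0\in(c_1,c_2)$, this segment avoids the only diagonal points of $\delta_S$, namely $(c_1,c_1)$ and $(c_2,c_2)$. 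For $(t,t)\in D$ one has $S(t,t)=(t,N_p(t))$ with $N_p(t)$ trapped strictly between $t$ and $\alpha_1$ (Newton's step stays on the branch between $\gamma_0$ and $\alpha_1$, where $p$ has constant sign and constant convexity), after which the secant iterates $S^n(t,N_p(t))$ form a monotone sequence converging to $\alpha_1$. Thus $D\subset\mathcal A(\alpha_1)$; being connected and meeting $\mathcal A^\star(\alpha_1)$ at $(\alpha_1,\alpha_1)$, it lies entirely in $\mathcal A^\star(\alpha_1)$, so $(\gamma_0,\gamma_0)\in\mathcal A^\star(\alpha_1)$ and its image $(\gamma_0,N_p(\gamma_0))$ lies in $\mathcal A^\star(\alpha_1)$ by forward invariance. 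I expect the sign bookkeeping for the monotonicity in (a) and the careful verification of monotone secant convergence on a single convex branch in (c) to be the two steps demanding the most attention.
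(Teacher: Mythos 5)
Your proposal is correct, and its global architecture (uniqueness of $x^\star(y)$ via Lemma \ref{lem:vert}, the identity $S(x^\star(y),y)=(y,N_p(x^\star(y)))$ from $q(x^\star,y)=p'(x^\star)$, the classification of extrema of $\Gamma$ through the sign of $N_p'=pp''/(p')^2$, and secant-method convergence on the convexity interval $[\gamma_0,\alpha_1]$ for part (c)) coincides with the paper's. Where you genuinely diverge is the monotonicity of $\Theta$ in part (a): the paper argues geometrically, interpreting $x^\star(y)$ as the point whose tangent line to the graph of $p$ passes through $(y,p(y))$ and tracking how this tangency point moves as $y$ varies using the single change of convexity; you instead differentiate implicitly the defining equation $F(x,y)=p'(x)(x-y)-(p(x)-p(y))=0$, compute $F_x=p''(x)(x-y)$, $F_y=p'(y)-p'(x)$, and pin down the sign of $F_y$ by a mean value theorem plus injectivity of $p'$ on each monotone branch. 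Your route is more analytic and arguably easier to check line by line; the paper's is shorter but rests on a qualitative picture. Two further points in your favor: you explicitly dispose of the degenerate component $\{p(y)=0\}$ of $\{\det DS=0\}$ (the line $y=\alpha_1$, which collapses to a point and is not a fold arc) — a subtlety the paper's proof passes over silently although it is needed for the literal statement of (a) to hold — and your divided-difference identity $q_x(x,y)=\tfrac12 p''(\zeta)$ gives a clean reason why the fold locus straddles $\gamma_0$. Conversely, in part (c) the paper proves the stronger fact that the whole square $[\gamma_0,\alpha_1]\times[\gamma_0,\alpha_1]$ lies in $\mathcal A^\star(\alpha_1)$, which contains $(\gamma_0,N_p(\gamma_0))$ outright; your version only treats the diagonal and then invokes forward invariance of $\mathcal A^\star(\alpha_1)$ without proof. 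That invocation is not a gap — $S(\mathcal A^\star(\alpha_1))$ is a connected subset of $\mathcal A(\alpha_1)$ containing the fixed point, hence lies in $\mathcal A^\star(\alpha_1)$ — but you should say this one-line argument explicitly, since the paper only records forward invariance in a remark that comes after (and depends on) this lemma.
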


\begin{proof}
Without lost of generality we assume $p^{\prime}\left(\alpha_0\right)>0$ and so $p|_{(\alpha_0,\alpha_1)}>0$ and $p|_{(\alpha_1,\alpha_2)}<0$. 

The first part of statement (a) follows from Lemma \ref{lem:vert}. So, only remains to prove that $\Theta(y)$ is strictly decreasing. This fact is easy by drawing qualitatively the graph of $p$ in the interval $(\alpha_0,\alpha_2)$ under, of course, the assumption of a unique inflection point. To be more precise observe that on the one hand if $y\ne \gamma_0$ then  $q_x(x^{\star}(y),y)=0$ if and only if
\begin{equation}\label{eq:x_star}
p^{\prime}\left(x^{\star}(y)\right)=\frac{p(y)-p\left(x^{\star}(y)\right)}{y-x^{\star}(y)},
\end{equation}
and on the other hand if $y=\gamma_0$ then $q_x\left(\gamma_0,\gamma_0\right)=p''(\gamma_0)/2=0$, since $\gamma_0$ is the unique inflection point in $(\alpha_0,\alpha_2)$. So $x^{\star}(\gamma_0)=\gamma_0$.
In other words for all $y\ne \gamma_0$, the point $x^{\star}(y)$ is the unique point such that $x^{\star}(y)\ne y$ and its tangent line (to the graph of $p$) coincides with the secant line between the points $(y,p(y))$ and $(x^{\star}(y),p(x^{\star}(y)))$ (see Figure \ref{fig:Theta}). Hence if we take a point $y\in (\alpha_0,\gamma_0$), since $p$ is concave in this interval and convex in $(\gamma_0,\alpha_2)$, it immediately follows that $x^{\star}(y) \in (\gamma_0,\tilde{c}_2)$ where $\tilde{c_2}\in(\alpha_1,c_2)$ corresponds to the solution of  \eqref{eq:x_star} for $y=\alpha_0$. The tangent line at $(x^*(y),p(x^*(y)))$ is below the graph of $p$ since $p$ is convex in the interval $(\gamma_0,\alpha_2)$.
Moreover, if $y$ increases towards  $\gamma_0$ then $x^{\star}(y)$ decreases towards $\gamma_0$.  See Figure \ref{fig:Theta}. In the case that  $y\in (\gamma_0,\alpha_2)$  the polynomial $p$ is convex in this interval with $x^{\star}(y) \in (\tilde{c}_1, \gamma_0)$ where $\tilde{c_1}\in(c_1,\alpha_1)$. Moreover, when $y$ decreases from $\alpha_2$ towards $\gamma_0$ then $x^*(y)$ increases from $\tilde{c}_1$ towards $\gamma_0$. Summarizing the closure of the curve  $\Theta(y)$ is an analytic curve joining the points $(\alpha_0,\tilde{c}_2)$ and $(\alpha_2,\tilde{c}_1)$ and being decreasing on $y$.

\begin{figure}[ht]
    \centering
     \includegraphics[width=0.40\textwidth]{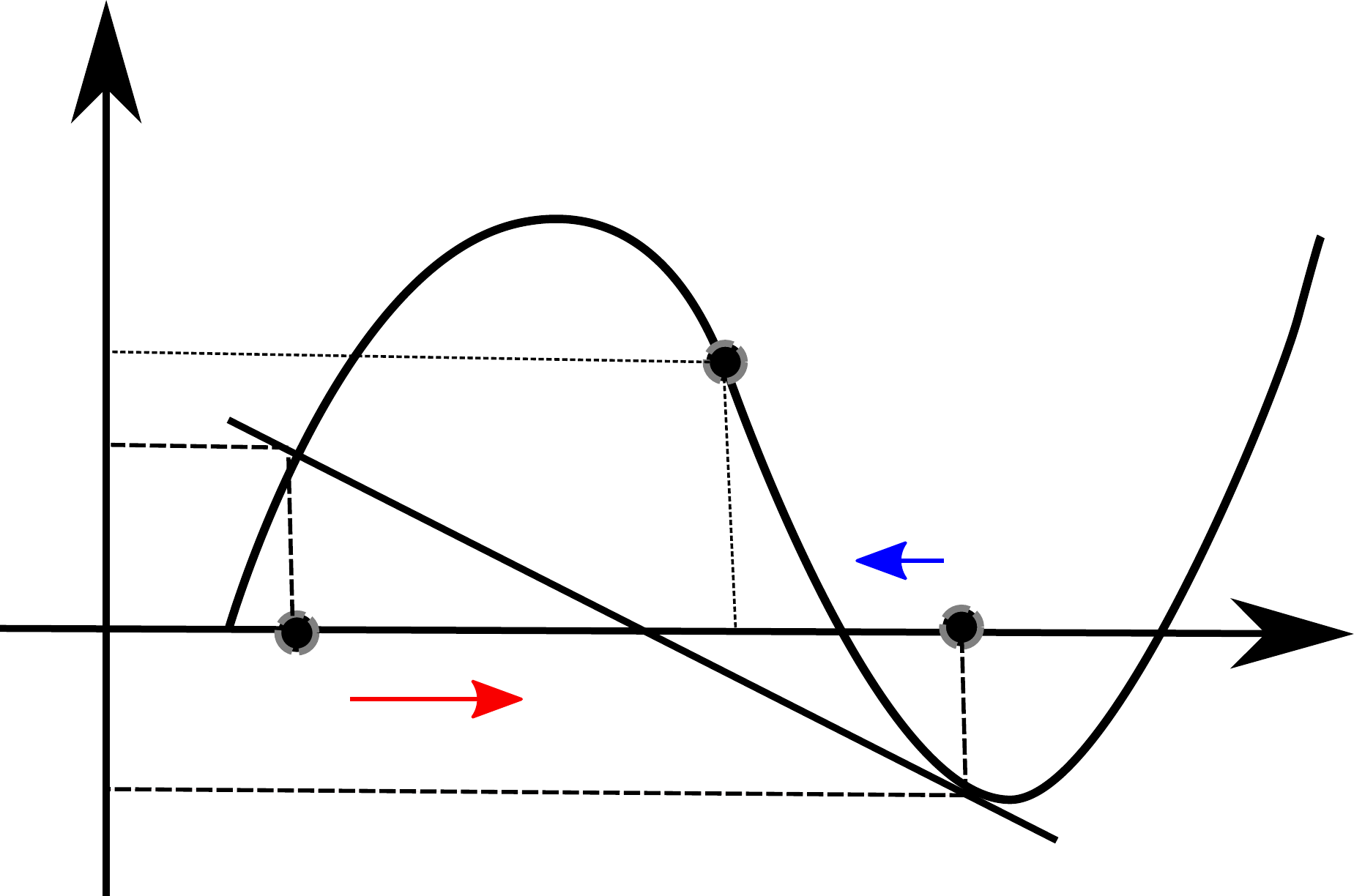}
    \put(-82,72) {\small $(\gamma_0,p(\gamma_0))$ } 
        \put(-155,25) {\small $\alpha_0$ } 
        \put(-22,25) {\small $\alpha_2$ } 
    \put(-139,25) {\small $y$ } 
    \put(-48,39) {\small $x^*(y)$ }
    \put(0,80) {\small $y=p(x)$ }
     \put(-198,12) {\footnotesize $p(x^*(y))$}
      \put(-183,55) {\footnotesize $p(y)$}
             \caption{\small{Sketch of the dependence of $x^*(y)$  with respect to  $y$, when $y \in (\alpha_0,\gamma_0)$.}}
    \label{fig:Theta}
    \end{figure}

We turn the attention to statement (b), that is the study of $\Gamma:=S(\Theta)$. Take a point $\left(x^{\star}(y),y\right)$. Its image is given by 
$$
S\left(x^{\star}(y),y\right)=\left(y,y-\frac{p(y)}{q\left(x^\star(y),y\right)}\right)=\left(y,x^\star(y)-\frac{p(x^\star(y))}{q\left(x^\star(y),y\right)}\right)=\left(y,N_p\left(x^{\star}(y)\right)\right),
$$
where the second equality follows from the general fact that the secant line passing through the points $(z,w)$ and $(w,z)$ is the same and the polynomial $q$ is symmetric. But now we can take advantage of the fact that $q\left(x^\star(y),y\right)=p^{\prime}\left(x^\star(y)\right)$ since $q(x,y)$ is the slope of the secant line through $x$ and $y$ and $x^\star(y)$ is precisely the point where this slope coincide with the derivative (as slope) of $p$ at $x^\star(y)$. Thus we conclude \eqref{eq:Gamma}.

\begin{figure}[ht]
    \centering
    \subfigure{
\includegraphics[width=0.45\textwidth]{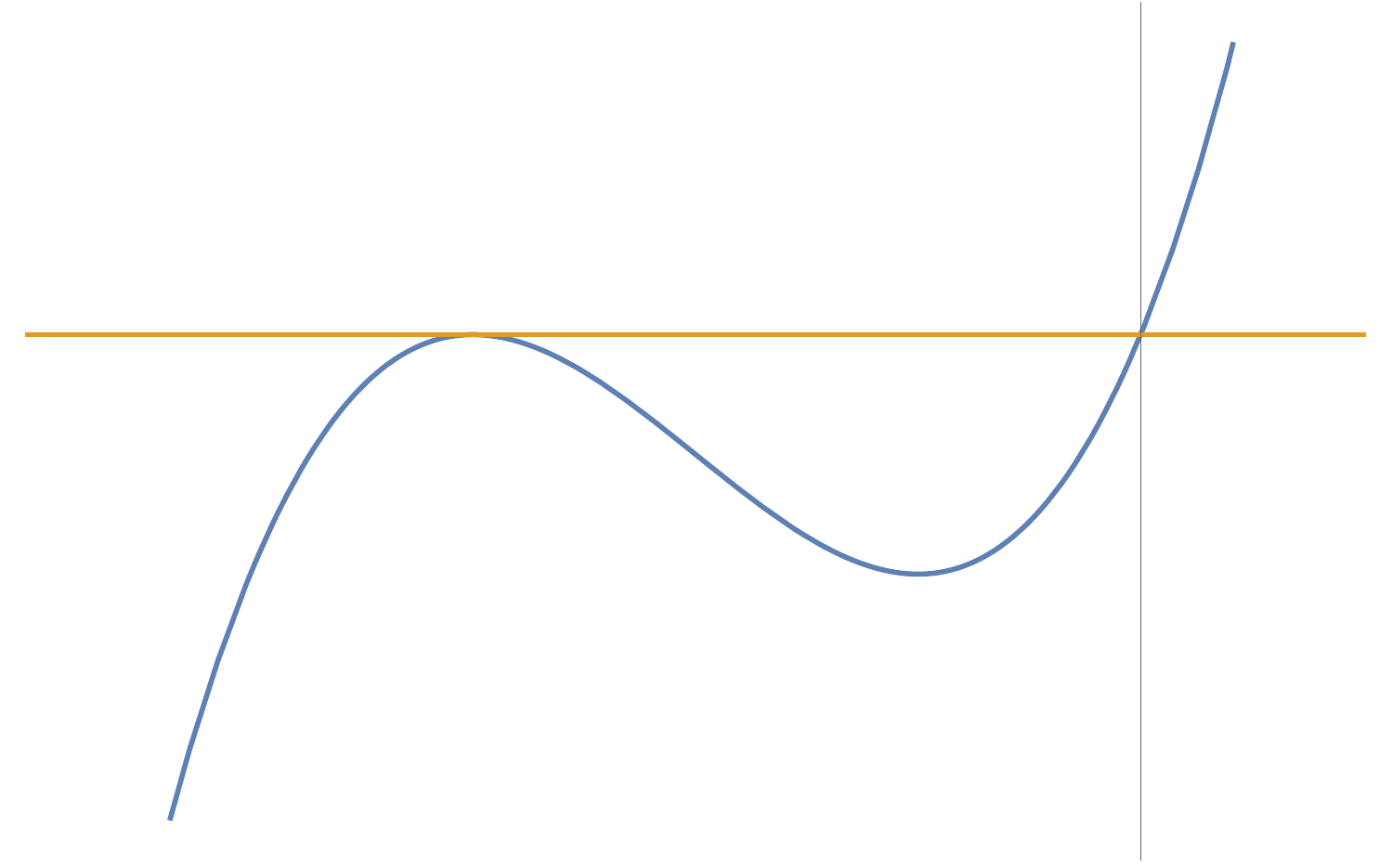}}
    \put(-225,75){\scriptsize $y=\alpha_1$}
       \put(-40,-5){\scriptsize $x=\alpha_1$}
     \put( -70,40){\small $\bullet$}
      \put( -38,74){\small $\bullet$}
      \put( -133,74){\small $\bullet$}
       \put( -142,63){\small $(\xi,\alpha_1)$}
      \put( -35,65){\small $(\alpha_1,\alpha_1)$}
      \put( -85,31){\small $(\gamma_0,N_p(\gamma_0))$} 
            \put( -180,27){\small $\Gamma$}
         \hglue 0.3truecm
        \subfigure{
\includegraphics[width=0.45\textwidth]{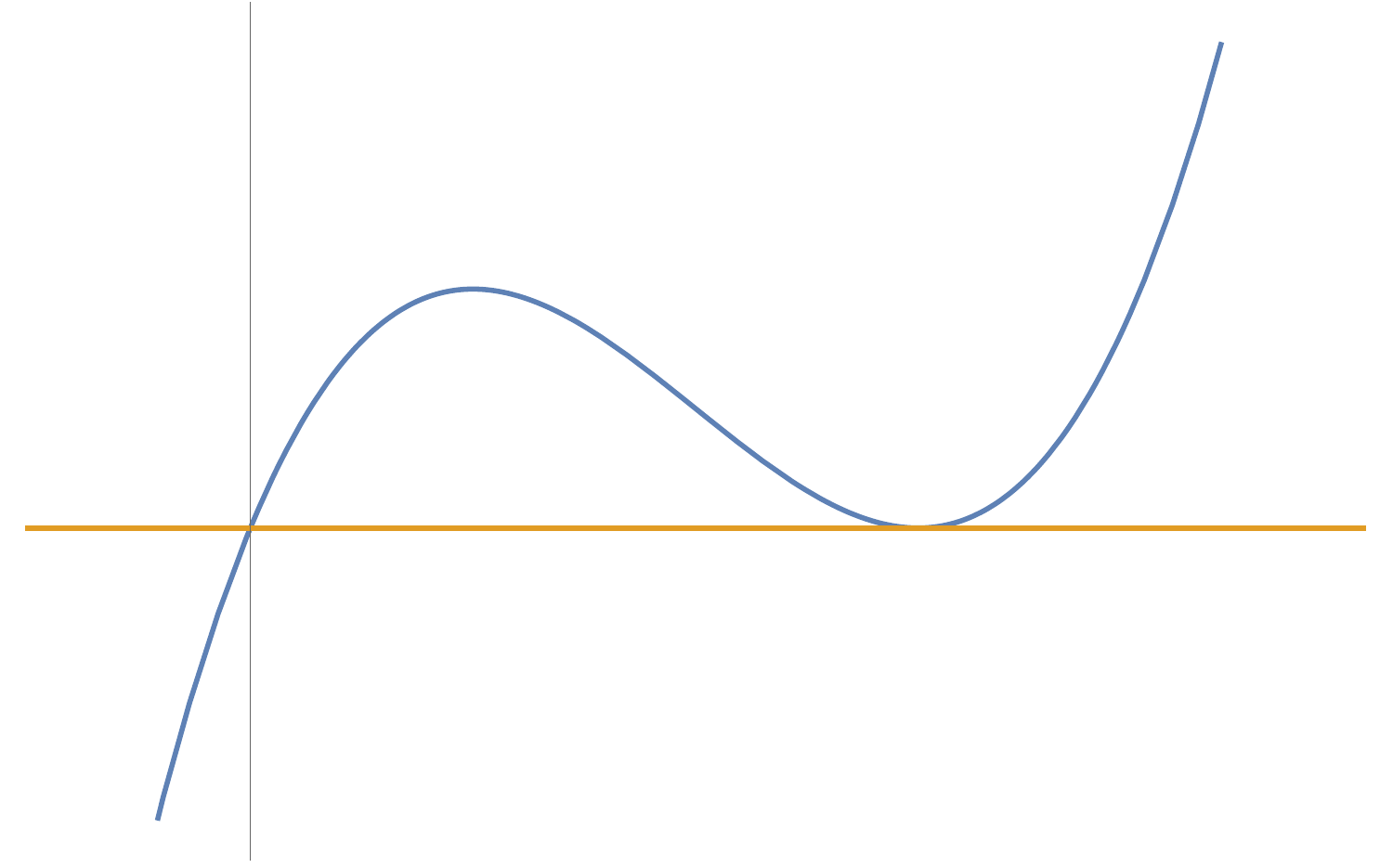}} 
    \put(-2,48){\scriptsize $y=\alpha_1$}
       \put(-177,-5){\scriptsize $x=\alpha_1$}
     \put( -165,46){\small $\bullet$}
      \put( -134,80){\small $\bullet$}
      \put( -70,46){\small $\bullet$}
           \put( -74,36){\small $(\xi,\alpha_1)$}
      \put( -149,87){\small $(\gamma_0,N_p(\gamma_0))$}
      \put( -193,57){\small $(\alpha_1,\alpha_1)$}  
      \put( -27,97){\small $\Gamma$}
 \caption{\small{Qualitative draw of $\Gamma$ depending on the relative position of $\gamma_0$ and $\alpha_1$. On the left hand side
  we  when $\gamma_0 < \alpha_1$ and on the right hand side when $\gamma_0>\alpha_1$. The case when $\gamma_0=\alpha_1$ both local extrema collide forming an 
  inflection point. See Figure \ref{fig:varphi} (a).}}
    \label{fig:Gamma}
    \end{figure}
    
The second part of statement (b) follows from the computation
$$
N_p'(x)=\frac{p(x)p^{\prime\prime}(x)}{(p^{\prime}(x))^2},$$ 
thus $ x \mapsto N_p(x)$ exhibits two local extrema at $x=\alpha_1$ and $x=\gamma_0$  in  the open interval $(\alpha_0,\alpha_2)$ since $N'_p$ changes it sign at $\alpha_1$ and at $\gamma_0$. Simple computations show that if $\gamma_0 < \alpha_1$ then $N_p$ has a local minimum at $\gamma_0$ and a local maximum at $\alpha_1$, and if $ \gamma_0>\alpha_1$ then the local minimum occurs at $\alpha_1$ and the local maximum at $\gamma_0$. Now using the  one-to-one relationship between $y$ and $x^\star(y)$, we obtain that $\Gamma$ has two local extrema  at $y=\gamma_0$ and $y=\xi$ since  $x^\star(\gamma_0)=\gamma_0$ and $x^\star(\xi)=\alpha_1$. If $\gamma_0<\alpha_1$ then $\Gamma$ exhibits a local maximum at $\xi$ and a local minimum at $\gamma_0$,  if $\gamma_0>\alpha_1$ then the local maximum appears at $\gamma_0$ and the local minimum at $\xi$. Summarizing the closure of $\Gamma$ is an analytic curve joining the points $(\alpha_0,\alpha_0)$ and $(\alpha_2,\alpha_2)$ inside $R$.

It remains to show (c). We easily have  that $S(\xi,\alpha_1)=(\alpha_1,\alpha_1)$ and  we observe that $S(\gamma_0,\gamma_0)=(\gamma_0,N_p(\gamma_0))$. Assume $\gamma_0<\alpha_1$ (the case $\gamma_0>\alpha_1$ follows similarly). Consider the interval 
$[\gamma_0,\alpha_1]$. Since $p$ is concave in this interval we can  deduce that no mater the pair of initial conditions on this interval, the secant map produces a point which  is much closer to $\alpha_1$ than whose predecessors and always being a point in $[\gamma_0,\alpha_1]$. Hence the whole square $[\gamma_0,\alpha_1]\times[\gamma_0,\alpha_1]$ belongs to the immediate basin.  %Once this is proven it is clear that $\Gamma$ is monotone on the relevant part where it touches $I$ and $K$ respectively. This is enough to conclude (c). 
\end{proof}

%\begin{figure}[ht]
%    \centering
%    \subfigure[\scriptsize{ The lobe $\ell_{02}$ attached to the focal point $Q_{2,0}$.}]{
%     \includegraphics[width=0.4\textwidth]{./figures/lobe1.pdf}}
%      \put(-70,33) {\small $Q_{2,0}$ } 
%      \put(-112,120) {\small $\ell_{2,0} $ } 
%            \put(-95,112) {\LARGE $\downarrow$ } 
%    \subfigure[\scriptsize{The lobe $\ell^2_{2,0}$ attached to the focal point $Q_{2,0}$.}]{
%     \includegraphics[width=0.446\textwidth]{./figures/lobe2.pdf}} 
%           \put(-185,20) {\small $\ell_{2,0}$ } 
%           \put(-187,10) {\Large $\longrightarrow$ }
%                      \put(-89,135) {\small $\ell^2_{2,0}$ }
%                        \put(-93,125) {\Large $\longrightarrow$Â } 
% \caption{\small{  Phase plane of the secant map applied to  $T_3(x)= 4x^3-3x$ near the focal point $Q_{2,0}$. }}
%    \label{fig:lobes}
%    \end{figure}
%

%\begin{prop}\label{prop:connec}
%Let $p$ be a polynomial and let  $\alpha_0 < \alpha_1 < \alpha_2$ be three consecutive simple real roots of $p$. Assume that  $p$ has only one inflection point in the interval $(\alpha_0, \alpha_2)$.  Then $\mathcal A^*(\alpha_1)$ is a simply connected set.
%\end{prop}

With all these in our hands we can start the proof of Theorem B, which is somehow a direct consequence of the previous results. 

\begin{proof}[Proof of Theorem B] We reason by contradiction. Let us assume, under
the assumptions, the existence of an immediate basin $\mathcal A^{\ast}(\alpha_{1})$
not simply connected. This means that internal
to the region bounded by the external frontier that we have described in
Proposition 4.2 there exists at least one internal region $U$ whose points are mapped
outside the immediate basin. Let us consider one point $y_{0} \in (\alpha_0,\alpha_2)$ such that the  horizontal line $H_{y_{0}}$ intersects $U$.  Hereafter can assume that $y_0 \neq \alpha_1$ since, except at focal points, the rest of the  points  in  $y = \alpha_1$ are in $\mathcal A(\alpha_1)$.  There must be three open segments  in $H_{y_0}$, as shown in Figure \ref{fig:U}, where $B \cap  \mathcal A^{\star}\left(\alpha_1\right)=\emptyset$. 

%Let $V$ such that $\overline{U}\subset V$ and $\partial V\subset \mathcal A^\star(\alpha_1)$.
%Consider a segment $\overline{ABC}\subset H_{y_{0}}\cap V$ such that $A\cup C \subset \mathcal A^{\star}\left(\alpha_1\right)$ and $B \cap  \mathcal A^{\star}\left(\alpha_1\right)=\emptyset$.  See Figure \ref{fig:U}.

We first show that for the considered $y_0\in (\alpha_0,\alpha_2)$ the point $x^\star:=x^\star\left(y_0\right) \in B$. Let us assume $y_0\in (\alpha_0,\alpha_1)$. We know that, as the value $x$ increases on $H_{y_0}$ over the segment $\overline{ABC}$ the shape of $\varphi_{y_0}(x)$ should be first decreasing and then increasing with the change of monotonicity occurring at the unique critical point $x^{\star}:=x^{\star}\left(y_0\right)$. Clearly $x^\star$ cannot be in the segment $A$ because as $x$ increases the image of $\varphi_{y_0}(x)$ must necessarily be first decreasing, then increasing, and finally decreasing again which is impossible according to Lemma \ref{lem:vert}. A similar argument shows that $x^\star \not\in C$. Hence $x^\star\in B$, or, equivalently in $B\cap \Theta\ne \emptyset$.  

We consider the image of the region $R$ by the map $S$, i.e., $S(R)$, and set $\tilde{R}=S(R)\cap R\ne\emptyset$. We know by the Lemmas \ref{lem:vert} and \ref{lemma:theta_gamma} that the image by $S$ of each horizontal line $y=y_0$ for $ y_0 \in(\alpha_0,\alpha_1)$  is folded over $\Gamma$; more precisely we have that $\tilde{R}=S(R)\cap R$ is bounded below by $\Gamma$ in the $y$-interval $(\alpha_0,\alpha_1)$ and bounded 
above by $\Gamma$ in the $y$-interval $(\alpha_1,\alpha_2)$, as qualitatively shown in Figure \ref{fig:grey}
(grey regions). The points in such grey region have, according to Lemma \ref{lem:inverse},  either one or two preimages in $R$. Differently,  the points belonging to the region $R \setminus \tilde{R}$ have no preimages in $R$ (white points in Figure \ref{fig:grey}).

Let $y_0$ be such that the line $y=y_0\in(\alpha_0,\alpha_1)$ intersects the set $U$. Its image in $R$ is a segment in $x=y_0$ given by $\varphi_{y_0}(x)$, which is folded on a unique point $\varphi_{y_0}(x^\star)\in \Gamma$ (see Lemma \ref{lem:vert}). Moreover, the image of $\{y=y_0\} \cap B$ must be outside the external boundary of $\mathcal A^{\star}\left(\alpha_1\right)$. So, according to the results in Section \ref{sec:interior} about the structure of the external boundary of $\mathcal A^{\star}\left(\alpha_1\right)$ (specifically Proposition \ref{prop:boundary_4cycle}) the image of $\{y=y_0\} \cap B$ must cross the external boundary of $\mathcal A^{\star}\left(\alpha_1\right)$ in a point of the arc $I$ connecting the focal points $Q_{0,1}$ and $Q_{1,0}$. Moreover as we have shown in Corollary \ref{coro:4cycle}, we have that $S^4(I)$ is folded on a subarc $I_1\subset I$ whose points have two inverses, at least one, say $w$, belonging to $\Lambda \subset \partial \mathcal A^{\star}\left(\alpha_1\right)$. The border point of $I_1$ is a point $r_{I}=\Gamma \cap I$. Thus, the arc $I$ splits in two subarcs: $I_1$ (with two preimages in $R$) and $I_2=I\setminus I_1$ (with no preimages in $R$), and $r_{I}=\overline{I}_1\cap \overline{I}_2$. We claim that the described configuration of images and preimages is a contradiction with Lemma \ref{lem:inverse}. To see the claim we observe that there must be two points $x_1$ and $x_2$ in $\{y=y_0\}\cap \partial B$ mapped to $\partial \mathcal A^{\star}\left(\alpha_1\right)$ at the same point $z\in I$. But $z$ is a point in $R$ with two preimages in $R$ while the configuration implies that $z$ has three preimages in $R$, $x_1,x_2$ and $w$, a contradiction. 

If $y_0\in(\alpha_1,\alpha_2)$ is such that the horizontal line $y=y_0$ intersects the set $U$ the arguments follows similarly. First we notice that, as before, $x^\star\left(y_0\right)$ must belong to the segment $B$ intersecting $U$. Moreover the image of $\{y=y_0\} \cap B$ must be outside the external boundary of $\mathcal A^{\star}\left(\alpha_1\right)$ and the image of $\{y=y_0\} \cap B$ must cross the external boundary of $\mathcal A^{\star}\left(\alpha_1\right)$ in a point of the arc $K$ connecting the focal points $Q_{1,2}$ and $Q_{2,1}$. The arc $K$ splits in two subarcs $K_1$ and $K_2$ having, respectively, two (at least one, say $w^{\prime}$, belonging to $J \subset \partial \mathcal A^{\star}\left(\alpha_1\right)$) and none preimages in $R$. The border point of $K_1$ is a point $r_{K}=\Gamma\cap K=\overline{K}_1\cap \overline{K}_2$. 
As before the described configuration of images and preimages is a contradiction with Lemma \ref{lem:inverse} since it creates a point $z^{\prime} \in K_1$ with three preimages.

 \begin{figure}[ht]
    \centering
     \includegraphics[width=0.25\textwidth]{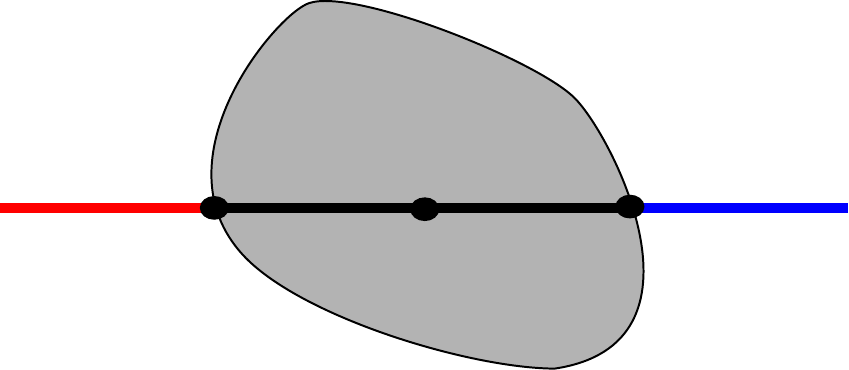}
          \put(-129,18){\small $H_{y_0}$}
            \put(-58,24){\small $x^\star $}
            \put(-93,25){\small $x_1 $}
            \put(-29,25){\small $x_2$}
     %       \put(-60,34){\small $\bullet $}
             \put(-97,13){\tiny $A$}
             \put(-45,13){\tiny $B$}
             \put(-23,13){\tiny $C$}
           \put(-57,51){\small $U$}
                    \caption{\small{Sketch of the phase plane assuming that $\mathcal A^*(\alpha_1)$ is multiply connected.}}
    \label{fig:U}
    \end{figure}

\begin{figure}[ht]
    \centering
     \includegraphics[width=0.65\textwidth]{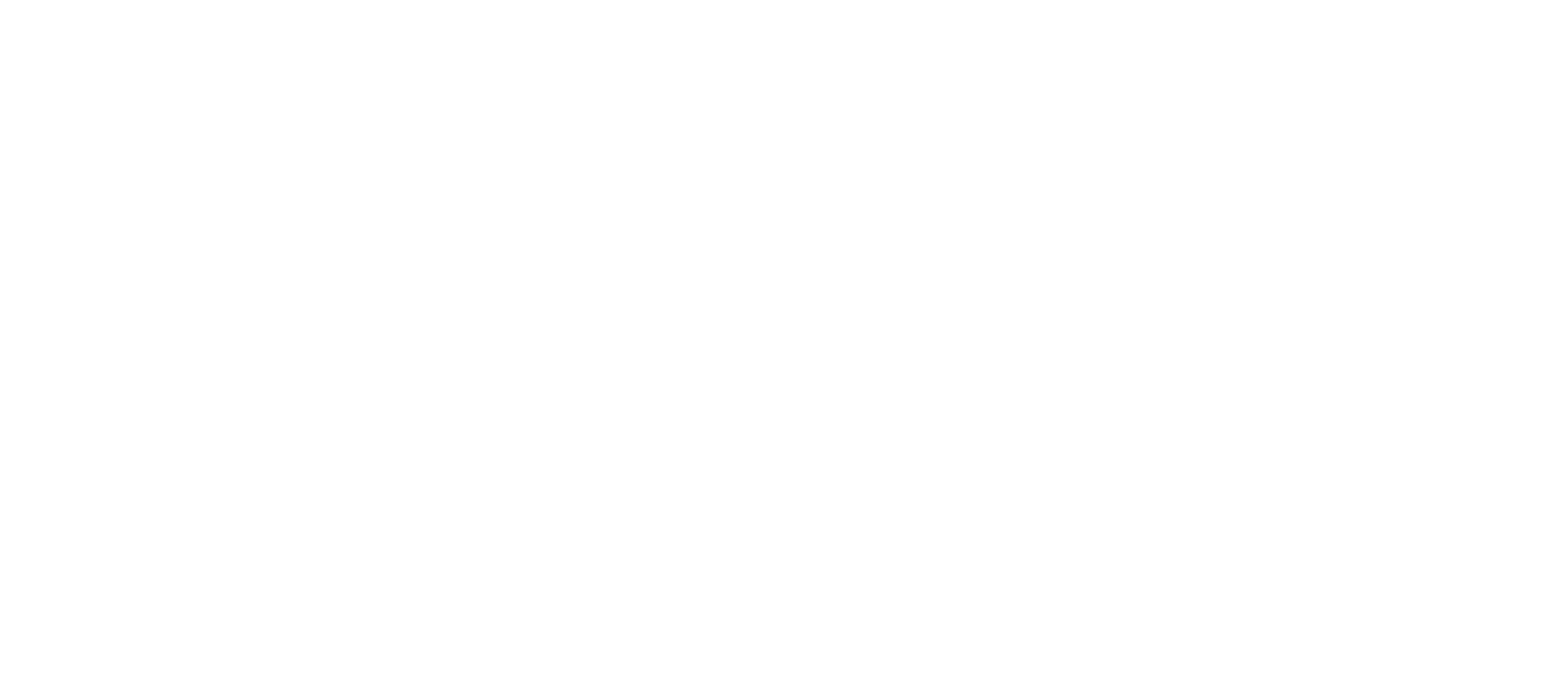}
          \put(-48,66){\small $\Gamma$}
           \put(-47,13){\small $\Theta$}
            \put(-135,111){\small $R$}
            \put(-117,61){\footnotesize $(\alpha_1,\xi)$}
            \put(-69,37){\footnotesize $(\xi,\alpha_1)$}
              \put(-230,-9){\small (a)}
%**********************                    
            \put(-261,51){\small $\Gamma$}
             \put(-280,111){\small $R$}
             \put(-240,111){\small $\Theta$}
              \put(-264,78){\footnotesize $(\xi,\alpha_1)$}
              \put(-206,37){\footnotesize $(\alpha_1,\xi)$}
                            \put(-80,-9){\small (b)}
                    \caption{\small{Qualitative picture of the image of $S(R)$ and the set $\tilde{R}=S(R) \cap R$ in grey. 
                    Figure (a) corresponds to $\gamma_0<\alpha_1$  and Figure (b) corresponds to $\gamma_0>\alpha_1$.}}
    \label{fig:grey}
    \end{figure}
\end{proof}

As before, we exemplify the above arguments with an example. We  consider again the secant map applied to the polynomial $p(x)= \frac{x5}{5}-\frac{x^3}{3}-0.05x+0.15$ (see Figure \ref{fig:multiply_connected}). The polynomial $p$ exhibits three simple real roots  $ \alpha_0 \simeq  -1.43014$ , $ \alpha_1 \simeq    0.817633$  and  $ \alpha_2 \simeq  1.17823$  and the immediate basin of attraction of the internal root $\alpha_1$ is not simply connected. 
In Figure \ref{fig:not_simply} we show the phase space of the secant map (left) and the graph of the map $\varphi$ given by
\[
\varphi(x)= y_0 - \frac{p(y_0)} {q(x,y_0)} = y_0 - \frac{\frac{y_0^5}{5}-\frac{y_0^3}{3}-0.05y_0+0.15}{\frac{1}{5} (y_0^4 + y_0^3 x + y_0^2 x^2 + y_0 x^3 + x^4) - \frac{1}{3} (y_0^2+ x y_0 +x^2) -0.05  } 
\]

In this case when $x$ moves in $H_{y_0}$ from $\alpha_0$  to $\alpha_2$ the graph of $\varphi$ exhibits two local minimum and one local maximum, and thus the secant map could map outside $\mathcal A^*(\alpha_1)$.
  
 \begin{figure}[ht]
    \centering
    \subfigure[\scriptsize{ Phase space of the secant map of the polynomial $p(x)=x^5/5-x^3/3-0.05x+0.15$. Range of the picture $[-2,2]\times[-2,2]$.}]{
     \includegraphics[width=0.42\textwidth]{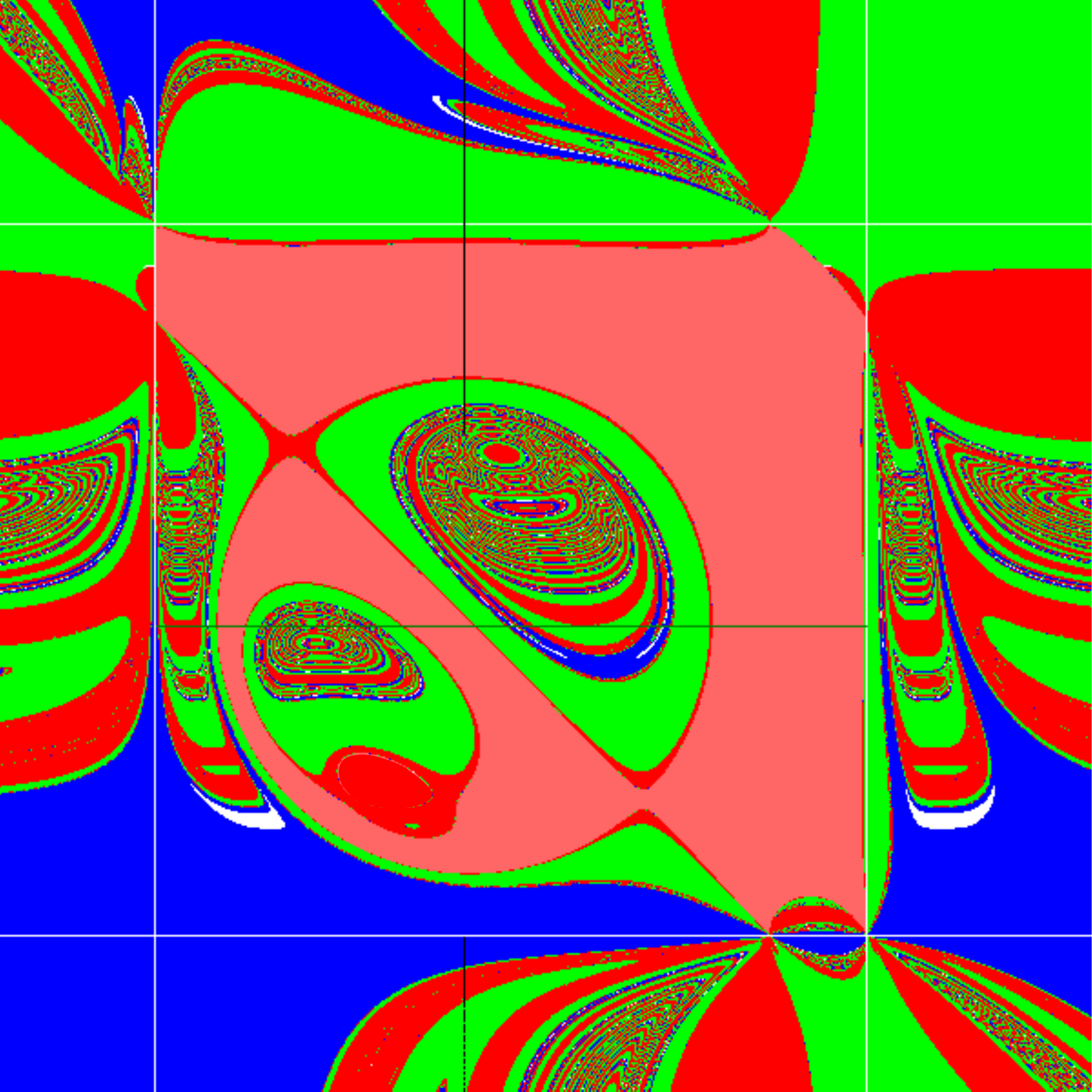}}
          \put(-62,83){\tiny $H_{y_0}$}
          \put(-105,130){\tiny $S(H_{y_0})$}
          %\put(-155,135){\small \w{$J_{y_0}$}}
          %\put(-91,65){\small \w{$ \bullet \, x^*$}}
          % \put(-160,101){\footnotesize \w{ $ \bullet \, \varphi(x^*)$}}
          %\put( -79,157){\small \w{$\delta_S $}}
          %\put( -131,151){\small \w{$q_x=0 $}} 
                  % \put( -195,157){\small \w{$\delta_S $}}
           %\put( -79,177){\small \w{R }}
            \subfigure[\scriptsize{Graph of $\varphi(x) = y_0 - p(y_0)/ q(x,y_0)$.}]{
     \includegraphics[width=0.49\textwidth]{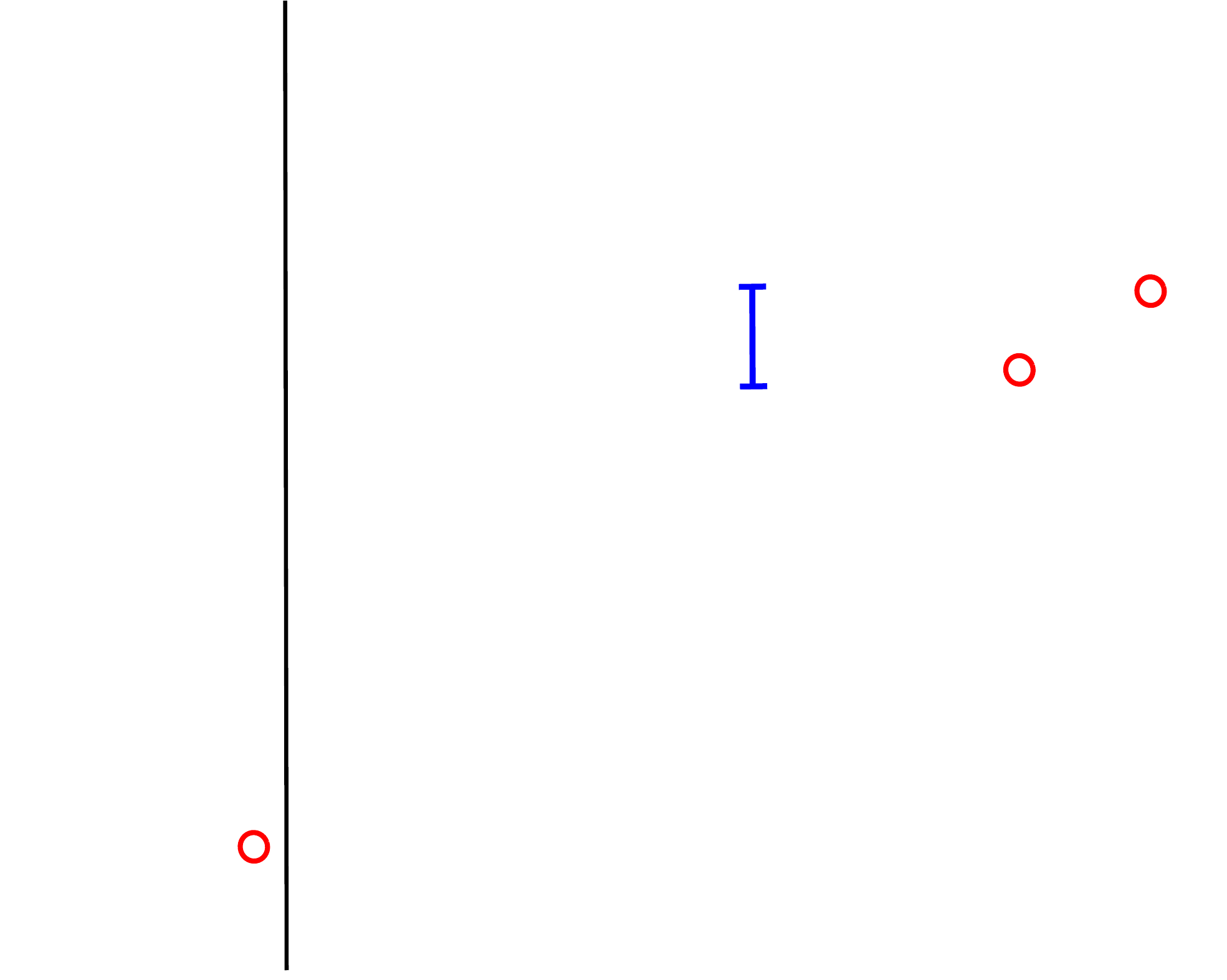}} 
           %\put( -73,105){\small $\bullet$} 
              %        \put( -73,100){\small $(x^*, \varphi(x^*))$} 
                 %     \put( -164,170){\small $y= \varphi(x)$} 
                       \put( -82,112){\small {\b  $J_{y_0}$}} 
                        \caption{\small{The secant map applied to a degree five polynomial for which the (only) internal immediate basin is multiply connected. In (a) it is shown the line $H_{y_0}$ with $y=y_0$ in the interval $\left(\alpha_0,\alpha_1\right)$. In (b) we show the graph of the related function $\varphi_{y_0}(x)$ when $y_0\in \left(\alpha_0,\alpha_1\right)$. In particular we illustrate that $\varphi$ exhibits three critical points, due to the existence of more than one change of convexity of $p$ in the relevant interval.}}
    \label{fig:not_simply}
    \end{figure}

\begin{remark}
From the proof of the above proposition we conclude that a simply connected immediate basin of attraction of an internal root $\mathcal A^*(\alpha_1)$ is forward invariant, i.e. $S(\mathcal A^*(\alpha_1)) \subset \mathcal A^*(\alpha_1)$. This is due to the fact that no point of $\mathcal A^*(\alpha_1)$ can be mapped outside the set $\mathcal A^*(\alpha_1)$.
\end{remark}

In Corollary \ref{cor:simple_roots} we collect the main results of this paper. Assuming that $p$ is a polynomial of degree $k$ with $k$ simple roots and only one inflexion point between any three consecutive roots we conclude, by Theorems A  and B, that 
the immediate basin of attraction of an internal root is simply connected and the boundary is controlled by a 4-cycle of type I of the secant map.

We finally mention the case of the external roots, i.e., $\alpha_0$ and $\alpha_{n-1}$, of the polynomial $p$.  In that case a similar approach could be done as in the case of the internal ones. However, several difficulties appear. The first one is that the immediate basin of attraction of an external root is unbounded and points in the set of no definition of $\delta_S$ belong to this immediate basin. The second difficulty is that depending on the oddity of the degree of $p$ the boundary of $\mathcal A^*(\alpha_0)$  contains  a critical three cycle $(c,c)\to (c,\infty)\to (\infty,c) \to (c,c)$ where $p'(c)=0$ (degree of $p$ even)  or  a 4-cycle  (degree of $p$ odd).

    \bibliographystyle{alpha}
\bibliography{BoundariesSecant}
\end{document}